\def\bfB{\mathbf{B}}
\def\bfE{\mathbf{E}}
\def\bfF{\mathbf{F}}
\def\bfM{\mathbf{M}}
\DeclareMathOperator{\catch}{\operatorname{catch}}
\DeclareMathOperator{\id}{\operatorname{id}}
\DeclareMathOperator{\Mat}{\operatorname{M}}
\DeclareMathOperator{\Hom}{\operatorname{Hom}}
\def\calSH{\mathcal{S}\mathcal{H}}
\DeclareMathOperator{\Ker}{\operatorname{Ker}}
\DeclareMathOperator{\Mata}{\operatorname{A}}
\DeclareMathOperator{\Mats}{\operatorname{S}}
\DeclareMathOperator{\Math}{\operatorname{H}}
\DeclareMathOperator{\End}{\operatorname{End}}
\DeclareMathOperator{\Diag}{\operatorname{Diag}}
\DeclareMathOperator{\NT}{\operatorname{NT}}
\DeclareMathOperator{\GL}{\operatorname{GL}}
\DeclareMathOperator{\Vect}{\operatorname{span}}
\DeclareMathOperator{\Alt}{\operatorname{Alt}}
\DeclareMathOperator{\Lrad}{\operatorname{Lrad}}
\DeclareMathOperator{\Rrad}{\operatorname{Rrad}}
\DeclareMathOperator{\im}{\operatorname{Im}}
\DeclareMathOperator{\tr}{\operatorname{tr}}
\DeclareMathOperator{\maxrk}{\operatorname{maxrk}}
\DeclareMathOperator{\trk}{\operatorname{trk}}
\DeclareMathOperator{\sprk}{\operatorname{sprk}}
\DeclareMathOperator{\Tr}{\operatorname{Tr}}
\DeclareMathOperator{\car}{\operatorname{char}}
\DeclareMathOperator{\rk}{\operatorname{rk}}
\renewcommand{\setminus}{\smallsetminus}
\renewcommand{\epsilon}{\varepsilon}
\def\K{\mathbb{K}}
\def\F{\mathbb{F}}
\def\R{\mathbb{R}}
\def\C{\mathbb{C}}
\def\N{\mathbb{N}}
\def\H{\mathbb{H}}
\newcommand{\D}{\mathbb{D}}
\def\calA{\mathcal{A}}
\def\calB{\mathcal{B}}
\def\calD{\mathcal{D}}
\def\calH{\mathcal{H}}
\def\calL{\mathcal{L}}
\def\calM{\mathcal{M}}
\def\calS{\mathcal{S}}
\def\calT{\mathcal{T}}
\def\calV{\mathcal{V}}
\def\calX{\mathcal{X}}
\def\lcro{\mathopen{[\![}}
\def\rcro{\mathclose{]\!]}}
\theoremstyle{definition}
\newtheorem{Def}{Definition}[section]
\newtheorem{Not}[Def]{Notation}
\theoremstyle{plain}
\newtheorem{theo}{Theorem}[section]
\newtheorem{prop}[theo]{Proposition}
\newtheorem{cor}[theo]{Corollary}
\newtheorem{lemma}[theo]{Lemma}
\newtheorem{claim}{Claim}[section]
\theoremstyle{plain}
\theoremstyle{remark}
\newtheorem{Rems}{Remarks}[section]
\newtheorem{Rem}[Rems]{Remark}
\title{Affine subspaces of units in simple algebras}
\author{Cl\'ement de Seguins Pazzis\footnote{Universit\'e de Versailles Saint-Quentin-en-Yvelines, Laboratoire de Math\'ematiques
de Versailles, 45 avenue des Etats-Unis, 78035 Versailles cedex, France}
\footnote{e-mail address: clement.de-seguins-pazzis@ac-versailles.fr}}
\begin{document}

\thispagestyle{plain}

\maketitle

\begin{abstract}
Let $\calA$ be a simple algebra over a field $\F$.
Under a mild cardinality assumption on $\F$, we determine the greatest possible dimension for an $\F$-affine subspace of $\calA$
that is included in the group of units $\calA^\times$, and we describe the spaces that have the greatest possible dimension. This is equivalent to the problem of determining the greatest possible dimension for an $\F$-linear subspace $S$ of $\calA$ in which $x-1_\calA$ is a unit for all $x \in S$, and we elucidate the structure of these linear subspaces up to conjugation when their
dimension reaches the greatest possible one.

These classifications involve the associative composition algebras over $\F$. Over fields of characteristic other than $2$, 
the first problem is essentially reduced to the classification of nonisotropic quadratic forms over $\F$ and of nonisotropic Hermitian forms over quadratic and quaternionic extensions of $\F$.

These results are intimately connected with the problem of intransitive operator spaces
between finite-dimensional vector spaces over division rings, which we study in depth: in particular, 
we generalize a dual version of Atkinson's theorem on primitive spaces of bounded rank matrices.
\end{abstract}

\vskip 2mm
\noindent
\emph{AMS MSC:} 15A30, 16K20

\vskip 2mm
\noindent
\emph{Keywords:} simple algebras, affine spaces, matrices over division rings, intransitivity, trivial spectrum spaces, associative composition algebras

\section{Introduction}

\subsection{The problem}

Let $\F$ be a field.
In the seminal \cite{Gerstenhaber}, Murray Gerstenhaber proved that
every linear subspace of nilpotent $n$-by-$n$ matrices (over $\F$) has its dimension no larger than $\frac{n(n-1)}{2}$,
and equality is attained only for the conjugates of the space $\NT_n(\F)$ of all strictly upper-triangular matrices
(Gerstenhaber actually required that $|\F| \geq n$, but his result has been later revealed to hold over an field \cite{Serezhkin}).

In \cite{Meshulamsymmetric}, Roy Meshulam noted that Gerstenhaber's theorem had a nice application to affine subspaces of invertible elements in matrix spaces:
if $\F$ is algebraically closed and $\calM$ is an affine subspace of $\Mat_n(\F)$ that contains only invertible matrices, 
then $\dim \calM \leq \frac{n(n-1)}{2}$ and equality is attained if and only if $\calM$ is equivalent to the 
space of all upper-triangular matrices with all diagonal entries equal to $1$.
The connection between the two problems is simple: every affine subspace of invertible matrices is equivalent to one which contains the identity matrix $I_n$,
and if an affine subspace $\calM$ contains $I_n$ then it consists only of invertible elements if and only if 
no element of the translation vector space $\overrightarrow{\calM}$ has a nonzero fixed vector: such a vector space is called a \textbf{trivial spectrum subspace}.
If the field is algebraically closed, the latter is equivalent to having $\overrightarrow{\calM}$ consist only of nilpotent matrices, hence the connection with 
Gerstenhaber's theorem for such fields. However, this is not true over more general fields, as for example the space $\Mata_n(\R)$ of all skew-symmetric real matrices 
is a trivial spectrum one but contains no nonzero nilpotent element. As a consequence $I_n+\Mata_n(\R)$ is a large affine subspace of invertible matrices,
but it is not difficult to prove that for $n>1$ it is not equivalent to the space $I_n+\NT_n(\F)$.

Meshulam's problem was solved in the early 2010's. In \cite{Quinlan} and \cite{dSPgivenrank}, the bound $\frac{n(n-1)}{2}$ from Gerstenhaber's theorem was generalized
to trivial spectrum subspaces over any field, and then a complete classification of the trivial spectrum subspaces with dimension $\frac{n(n-1)}{2}$, called the 
\emph{optimal} ones, was achieved \cite{dSPlargeaffinenonsingular} over any field with cardinality greater than $2$. Spectacularly, the classification is intimately connected with the \emph{quadratic} structure of the field $\F$: the classification of the optimal trivial spectrum subspaces is reduced to the classification 
of nonisotropic bilinear forms over finite-dimensional $\F$-vector spaces, up to congruence (i.e., equivalence and multiplication with a nonzero scalar).
In particular, for quadratically closed fields the trivial spectrum subspaces of maximal dimension all consist of nilpotent operators.
As a consequence, all the affine subspaces of invertible elements of $\Mat_n(\F)$ have dimension at most $\frac{n(n-1)}{2}$,
and (for fields with more than $2$ elements) the classification up to equivalence of the ones of maximal dimension, called the optimal ones, has been reduced to the classification of nonisotropic quadratic forms over $\F$ up to congruence. To be more precise, to each equivalence class of optimal affine subspace of invertible elements of $\Mat_n(\F)$ can be attached a list $(q_1,\dots,q_p)$ of nonisotropic quadratic forms on finite-dimensional vector spaces over $\F$
such that $n=\sum_{k=1}^p \dim q_k$, and the equivalence class of the said matrix space determines the congruence class of each $q_i$.
We must of course insist that this result fails for fields with $2$ elements.
For example, for $\F=\R$ this shows that there are as many equivalence classes of optimal affine spaces as there are ordered partitions of $n$.
Finally, these results have been applied to the study of affine subspaces of matrices with rank bounded below \cite{dSPlargeaffinerankbelow} and to the one of linear subspaces of diagonalisable matrices \cite{dSPSEVdiag2}.

Here we consider the more general problem of affine subspaces of units in simple algebras.
A simple algebra over $\F$ is a nontrivial unital associative $\F$-algebra $\calA$ with finite dimension over $\F$
and which has no nontrivial two-sided ideal. Two subsets $\calS$ and $\calT$ of $\calA$ are called \textbf{equivalent}
whenever there exist units $a,b$ in $\calA^\times$ such that $\calT=a \calS b$, in which case we write $\calS \sim \calT$, and they are called \textbf{similar}
whenever there exists a unit $a \in \calA^\times$ such that $\calT=a \calS a^{-1}$, in which case we write $\calS \simeq \calT$.

By a classical theorem of Wedderburn, the simple $\F$-algebras are the isomorphs of the matrix algebras of the form $\Mat_n(\D)$, where $\D$ is a division ring which contains $\F$ as a central subfield (i.e., $\F$ is included in the center of $\D$) and has finite dimension over $\F$, and $n$ is a positive integer.
Here, we expressly do not require that $\calA$ is central as an $\F$-algebra, for this would
leave out many interesting cases, e.g., the one of the $\R$-algebra $\Mat_n(\C)$.

Our main problem is the extension of Meshulam's: Given a simple $\F$-algebra $\calA$, what is the greatest possible dimension for an 
$\F$-affine subspace of $\calA$ that is included in the unit group $\calA^\times$? 
And what are the ones that have the greatest possible dimension?

We can easily translate Meshulam's observation in this general situation:
every $\F$-affine subspace of $\calA$ is equivalent to one which contains the unity $1_\calA$,
and hence we can largely bound the discussion by studying the $\F$-linear subspaces $S$ of $\calA$
for which $1_A-x$ is a unit for all $x \in S$.

\begin{Def}
A subset of $\calA$ is called \textbf{unital} when it is included in the group $\calA^\times$ of units of $\calA$.
A unital $\F$-affine subspace is called \textbf{optimal} when it has the greatest possible dimension among such subspaces.

An $\F$-linear subspace $S$ of $\calA$ is said to have \textbf{trivial spectrum} whenever 
$1_\calA+S$ is unital, i.e., $1_\calA-x \in \calA^\times$ for all $x \in S$.
It is called an \textbf{optimal trivial spectrum subspace} whenever it has the greatest possible dimension among such subspaces.
\end{Def}

We note that, at the other end of the spectrum, the analogue problem for the $\F$-affine subspaces of elements of $\calA$
that contain \emph{no} unit has been entirely solved in \cite{dSPFlandersskew}
(the greatest possible dimension has been obtained, as well as the structure of the spaces with greatest possible dimension).
It can hardly be disputed that this problem is considerably simpler than the one we are tackling here.

\subsection{Reduction to the irreducible spaces}\label{section:irreducibility}

In order to state our results more clearly, it is unavoidable to use Wedderburn's structure theorem for simple algebras and to frame
the problem into one on spaces of linear operators or of matrices. So, we now let $\D$ be an arbitrary division ring which contains $\F$ as a central subfield, and with finite dimension $d$ over $\F$. We will consider right vector spaces over $\D$ throughout, enabling us to represent $\D$-linear mappings between vector spaces by matrices with entries in $\D$ in the usual way (so that vector are represented by columns and applying an operator corresponds to multiplying the coordinate vector on the left by the matrix of the operator). For such vector spaces $U$ and $V$, we get the $\F$-vector space $\Hom_\D(U,V)$ of all $\D$-linear mappings from $U$ to $V$,
and the simple $\F$-algebra $\End_\D(U)$ of all endomorphisms of the $\D$-vector space $U$. The latter is isomorphic to the matrix $\F$-algebra $\Mat_n(\D)$
where $n:=\dim_\D U$ through a choice a basis of $U$. Throughout, we consider $\Mat_n(\D)$ as an $\F$-vector space and 
will not consider potential enriched structures (for example, when $\D=\C$ and $\F=\R$, we only view $\Mat_n(\C)$ as an $\R$-vector space, and never as 
a $\C$-vector space).

We can immediately give an example of a very large trivial spectrum subspace of $\Mat_n(\D)$, which generalizes Gerstenhaber's nilpotent spaces:
we can choose various $\F$-linear hyperplanes $H_1,\dots,H_n$ of $\D$ that do not contain $1_\D$, and form the 
set $H_1 \vee \cdots \vee H_n$ of all matrices of $\Mat_n(\D)$ of the form 
$$\begin{bmatrix}
h_1 & & [*] \\
& \ddots & \\
(0) & & h_n
\end{bmatrix}$$
where $h_i \in H_i$ for all $i \in \lcro 1,n\rcro$, and the entries above the diagonal are arbitrary.
It is clear that this is a trivial spectrum subspace of $\Mat_n(\D)$.
We note that this space has dimension 
$$\alpha(n,d):=n(d-1)+d\binom{n}{2}.$$
More generally, we can create new trivial spectrum subspaces of matrices from known ones thanks to the \emph{junction} operator $\vee$.
For nonempty subsets $\calM_1 \subseteq \Mat_{n_1}(\D),\dots,\calM_p \subseteq \Mat_{n_p}(\D)$, we define their \textbf{joint}
$\calM_1 \vee \cdots \vee \calM_p$ as the set of all block upper-triangular matrices of the form
$$\begin{bmatrix}
M_1 & & [*] \\
& \ddots & \\
[0] & & M_p
\end{bmatrix}$$
where $M_1 \in \calM_1,\dots,M_p \in \calM_p$ and the blocks above the diagonal are arbitrary.
Since $\D$ is finite-dimensional over $\F$, for any block upper-triangular matrix
$M=\begin{bmatrix}
A & B \\
[0] & C
\end{bmatrix}$ with entries in $\D$ the invertibility of $M$ is equivalent to the one of $A$ and $C$.
As a consequence, $\calM_1 \vee \cdots \vee \calM_p$ is a unital $\F$-affine subspace if and only if each $\calM_i$
is a unital $\F$-affine subspace, and  $\calM_1 \vee \cdots \vee \calM_p$ is a trivial spectrum linear subspace if and only if
each $\calM_i$ is a trivial spectrum linear subspace.

The above construction is of course intimately connected with the notion of irreducibility for subsets of endomorphisms.
Given a finite-dimensional vector space $V$ over $\D$ and a subset $\calX$ of $\End_\D(V)$,
a $\D$-linear subspace $V_0$ of $V$ is called \textbf{$\calX$-invariant} whenever $\forall f \in \calX, \; \forall x \in V_0, \; f(x) \in V_0$,
and we say that $\calX$ is \textbf{irreducible} whenever $V \neq \{0\}$ and the only $\calX$-invariant subspaces of $V$ are $\{0\}$ and $V$.
If $S$ is a trivial spectrum subspace and $V_0$ is an $S$-invariant subspace of $V$, then 
$S$ induces respective linear subspaces of $\Hom_\D(V_0)$ and $\Hom_\D(V/V_0)$, and both are trivial spectrum subspaces.

Then, we have the equivalent of the junction operator for subsets of endomorphisms:
given a flag $(V_0,\dots,V_p)$ of $\D$-linear subspaces of $V$, and given (non-empty) subsets
$\calS_1 \subseteq \End_\D(V_1/V_0),\dots,\calS_p \subseteq \End_\D(V_p/V_{p-1})$, we denote by 
$\calS_1 \vee \cdots \vee \calS_p$ the set of all $\D$-linear endomorphisms of $V$ that leave
each space $V_i$ invariant and induce, for each $i \in \lcro 1,p\rcro$, an endomorphism of $V_i/V_{i-1}$ that belongs to $\calS_i$.

We can now invoke a result that was obtained in the appendix of \cite{dSPtriangularizable} as a special case of a more general one:

\begin{theo}\label{theo:generalnonsense}
Let $S$ be an \emph{optimal} trivial spectrum subspace of $\End_\D(V)$ for some vector space $V$.
Then:
\begin{enumerate}[(a)]
\item The $S$-invariant subspaces of $V$ are totally ordered under inclusion, thereby forming a (potentially incomplete) flag $(V_0,\dots,V_p)$
of linear subspaces of $V$.
\item For all $i \in \lcro 1,p\rcro$, the $\F$-linear subspace $S_i$ of $\End_\D(V_i/V_{i-1})$
induced by $S$ is an irreducible optimal trivial spectrum subspace.
\item The space $S$ equals $S_1 \vee \cdots \vee S_p$.
\end{enumerate}
\end{theo}

In matrix terms, this translates as follows:

\begin{theo}
Let $\calM$ be an \emph{optimal} trivial spectrum subspace of $\Mat_n(\D)$ for some integer $n>0$.
Then there is a partition $n=n_1+\cdots+n_p$ into positive integers, and a list 
$(\calM_1,\dots,\calM_p)$ in which:
\begin{enumerate}[(i)]
\item For each $i \in \lcro 1,p\rcro$, the set $\calM_i$ is an irreducible optimal trivial spectrum subspace of $\Mat_{n_i}(\D)$.
\item The space $\calM$ is similar to $\calM_1 \vee \cdots \vee \calM_p$.
\item The similarity class of each $\calM_i$ is uniquely determined by the one of $\calM$.
\end{enumerate}
\end{theo}

This shows that the problem of classifying, up to conjugation, the optimal trivial spectrum subspaces of matrices over $\D$
can be reduced to the subproblem of classifying the irreducible ones.

One difficulty must be pointed out though, and we call it the \emph{reconstruction problem}. If we have irreducible optimal trivial spectrum subspaces of matrices
$\calM_1,\dots,\calM_p$, then although their joint $\calM_1 \vee \dots \vee \calM_p$ is surely a trivial spectrum subspace,
there can be no simple way to prove that it is optimal. To see this, consider the mundane case 
where we take $\F$-linear hyperplanes $H_1,\dots,H_n$ of $\D$ that do not contain $1_\D$,
and we form the joint $H_1 \vee \cdots \vee H_n$. Clearly each $H_i$ is an optimal trivial spectrum subspace of $\Mat_1(\D)$, 
and their joint can be optimal only if we prove that the greatest possible dimension for a trivial spectrum subspace of $\Mat_n(\D)$
is $\alpha(n,d)$. Yet, as we shall see this result is not known when $\F$ is an arbitrary finite field (unless $d=1$).

However, if we have an integer $n>0$ and we know that $\alpha(k,d)$ is the greatest possible dimension for a trivial spectrum subspace of 
$\Mat_k(\D)$ for all $k \in \lcro 1,n\rcro$ then for any partition $n=n_1+\cdots+n_p$ and any list
$(\calM_1,\dots,\calM_p)$ of optimal trivial spectrum subspaces of, respectively, $\Mat_{n_1}(\D),\dots,\Mat_{n_p}(\D)$, the space
$\calM_1 \vee \cdots \vee \calM_p$ is optimal since it is easily observed that 
$$n(d-1) +d\dbinom{n}{2}=\sum_{i=1}^p \left(n_i(d-1)+d\dbinom{n_i}{2}\right)+\sum_{1 \leq i<j \leq p} d n_i n_j.$$

\subsection{Review of older results}

Now that the main concepts are in place, we can review the known results on our problem.
As said in the introduction, these results deal with the special case $\D=\F$ only:

\begin{theo}[Quinlan \cite{Quinlan}, de Seguins Pazzis \cite{dSPgivenrank}]\label{theo:Quinlan}
Let $n>0$. The greatest possible dimension for a trivial spectrum subspace of $\Mat_n(\F)$ is $\dbinom{n}{2}$.
\end{theo}

The structure of the spaces of greatest possible dimension is connected with bilinear forms.
Let $V$ be an $\F$-vector space and $b$ be a bilinear form on $V$.
An endomorphism $u$ of $V$ is called \textbf{$b$-alternating} (on the right) 
whenever $\forall x\in V, \; b(x,u(x))=0$, i.e.\ the bilinear form $(x,y)\in V^2 \mapsto b(x,u(y))$ is alternating.

\begin{Not}
We denote by $\calA_b$ the set consisting of all the $b$-alternating endomorphisms of $V$.
This is a linear subspace of $\End_\F(V)$.
\end{Not}

Assume now that $b$ is nonisotropic, that is $\forall x \in V, \; b(x,x) \neq 0$. Then $\calA_b$ has dimension $\frac{n(n-1)}{2}$
(by the non-degeneracy of $b$), and it is a trivial spectrum subspace
(indeed, for all $u \in \calA_b$, having $u(x)=x$ for some $x \in V \setminus \{0\}$ implies that $0=b(x,u(x))=b(x,x)$).
It is not difficult to prove that it is also an irreducible one if $n>0$.

\begin{theo}[de Seguins Pazzis \cite{dSPlargeaffinenonsingular}]\label{theo:altoptimalirreducible}
Assume that $|\F|>2$.
Let $V$ be an $n$-dimensional vector space over $\F$, with $n>0$.
Then the irreducible optimal trivial spectrum subspaces of $\End_\F(V)$ are
the spaces of the form $\calA_b$ for some nonisotropic bilinear form $b$ on $V$.

For two such forms $b$ and $b'$, the spaces $\calA_b$ and $\calA_{b'}$ are equal if and only if 
$b'=\lambda b$ for some $\lambda \in \F^\times$, and they are conjugated in $\End_\F(V)$ if and only if $b'$ is congruent to $b$
(i.e., there exists an automorphism $\varphi$ of $V$ and a scalar $\alpha \in \F^\times$ such that $b'(x,y)=\alpha\, b(\varphi(x),\varphi(y))$
for all $x,y$ in $V$).
\end{theo}

As a consequence, we get the full picture for optimal trivial spectrum subspaces:

\begin{theo}
Assume that $|\F|>2$.
Let $V$ be an $n$-dimensional vector space over $\F$, with $n>0$.
Let $S$ be an optimal trivial spectrum subspace of $\End_\F(V)$, with flag of invariant subspaces denoted by $(V_0,\dots,V_p)$.
Then for each $i \in \lcro 1,p\rcro$ there is a nonisotropic bilinear form $b_i$ on $V_i/V_{i-1}$, uniquely determined by $S$
up to multiplication with a nonzero scalar, such that $S=\calA_{b_1} \vee \cdots \vee \calA_{b_p}$.
\end{theo}

Conversely, given a flag $(V_0,\dots,V_p)$ of linear subspaces of $V$, and for each $i \in \lcro 1,p\rcro$
a nonisotropic bilinear form $b_i$ on $V_i/V_{i-1}$, the set $S:=\calA_{b_1} \vee \cdots \vee \calA_{b_p}$
is an optimal trivial spectrum subspace of $\End_\F(V)$ with associated flag $(V_0,\dots,V_p)$ of invariant subspaces.

We can also state the matrix version of the problem, where a square matrix $P \in \Mat_n(\F)$
is called \textbf{nonisotropic} whenever $\forall X \in \F^n \setminus \{0\}, \; X^T PX \neq 0$,
and two square matrices $M$ and $M'$ are called \textbf{homocongruent} whenever there exist a scalar $\alpha \in \F^\times$
and an invertible matrix $Q\in \GL_n(\F)$ such that $M'=\alpha\, Q M Q^T$. Also, for $n \in \N^*$ we denote by $\Mata_n(\F)$
the space of all alternating $n$-by-$n$ matrices (i.e., skew-symmetric matrices with all diagonal entries zero).

\begin{theo}
Assume that $|\F|>2$.
\begin{enumerate}[(a)]
\item Let $n>0$ and $\calM$ be an optimal trivial spectrum subspace of $\Mat_n(\F)$. Then there exist a partition $n=n_1+\cdots+n_p$
and nonisotropic matrices
$P_1 \in \GL_{n_1}(\F),\dots,P_p \in \GL_{n_p}(\F)$ such that 
$$\calM \simeq (P_1^{-1} \Mata_{n_1}(\F)) \vee \cdots \vee (P_p^{-1} \Mata_{n_p}(\F)).$$
The partition $(n_1,\dots,n_p)$ is then uniquely determined by $n$, as well as the homocongruence classes of the matrices 
$P_1,\dots,P_p$.
\item Conversely, for each partition $n=n_1+\cdots+n_p$ and each list of 
 nonisotropic matrices $P_1 \in \GL_{n_1}(\F),\dots,P_p \in \GL_{n_p}(\F)$, the set
$(P_1^{-1} \Mata_{n_1}(\F)) \vee \cdots \vee (P_p^{-1} \Mata_{n_p}(\F))$ is an optimal trivial spectrum subspace of $\Mat_n(\F)$.
\end{enumerate}
\end{theo}

Finally, we can state the solution of the initial problem of classifying the optimal unital affine subspaces of $\Mat_n(\F)$.
Here, the matrix formulation turns out to be the clearer one:

\begin{theo}
Assume that $|\F|>2$.
\begin{enumerate}[(a)]
\item Let $n>0$ and $\calM$ be an optimal unital affine subspace of $\Mat_n(\F)$.
Then there exist a partition $n=n_1+\cdots+n_p$
and nonisotropic matrices
$P_1 \in \GL_{n_1}(\F),\dots,P_p \in \GL_{n_p}(\F)$ such that 
$$\calM \sim (P_1+ \Mata_{n_1}(\F)) \vee \cdots \vee (P_p+ \Mata_{n_p}(\F)).$$
The partition $(n_1,\dots,n_p)$ is then uniquely determined by $n$, as well as the congruence class of each quadratic form
$X \mapsto X^T P_i X$.
\item Conversely for each partition $n=n_1+\cdots+n_p$ and each list of 
 nonisotropic matrices $P_1 \in \GL_{n_1}(\F),\dots,P_p \in \GL_{n_p}(\F)$, the set
$(P_1+ \Mata_{n_1}(\F)) \vee \cdots \vee (P_p+ \Mata_{n_p}(\F))$ is an optimal unital affine subspace of $\Mat_n(\F)$.
\end{enumerate}
\end{theo}

Hence, this theorem gives a one-to-one correspondence between the equivalence classes of the optimal unital affine subspace of $\Mat_n(\F)$
on the one hand, and the lists of congruence classes of nonisotropic \emph{quadratic} forms of respective dimensions $n_1,\dots,n_p$
such that $n=\sum_{k=1}^p n_k$ on the other hand.

\begin{Rem}
In the previous classification results, the case of fields with two elements is systematically omitted. 
Indeed, it has been shown through various examples (see section 5.2 of \cite{dSPprimitiveF2}) that a classification theorem for $\F_2$ is hopeless.
\end{Rem}

\subsection{The new results on trivial spectrum subspaces}

We are now ready to state the results on the more general case of a simple $\F$-algebra.
From now on, we consider a division ring $\D$ with finite dimension $d$ over a central subfield $\F$.

We start with the statement on the greatest possible dimension for a trivial spectrum (linear) subspace.
This generalizes Theorem \ref{theo:Quinlan}, however with a cardinality assumption on $\F$.

\begin{theo}\label{theo:dimMax}
Let $\D$ be a division ring with dimension $d$ over a central subfield $\F$, and $V$ be a (right) vector space over $\D$ with finite dimension $n>0$. Assume that $|\F| \geq dn$.
Then the greatest possible dimension for a trivial spectrum subspace of $\End_\D(V)$ (or, equivalently, of $\Mat_n(\D)$) is
$$\alpha(n,d)=n(d-1)+d\,\dbinom{n}{2}.$$
\end{theo}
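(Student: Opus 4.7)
My plan is to establish the upper bound $\dim_\F \calS \leq \alpha(n,d)$ by strong induction on $n$, the matching lower bound being furnished by the block-upper-triangular construction of Subsection 1.1. For the base case $n=1$, note that $\End_\D(V) \cong \D$ as an $\F$-algebra via left multiplication, and that for an $\F$-subspace $\calS$ of $\D$, having $\F$-trivial spectrum is equivalent to $\calS \cap \F = \{0\}$ (if $\lambda \in \F^* \cap \calS$ then $1 = \lambda^{-1} \lambda \in \calS$ has $1$ as eigenvalue on $\D$); hence $\dim_\F \calS \leq d-1 = \alpha(1,d)$.

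For the inductive step, I would first dispose of the reducible case. If $\calS$ admits a nontrivial invariant $\D$-subspace $W$ with $k := \dim_\D W$, then the strong inheritance condition forces both $\calS|_W$ and the induced subspace $\overline{\calS} \subseteq \End_\D(V/W)$ to have $\F$-trivial spectrum. The natural restriction-quotient map $\calS \to \End_\D(W) \times \End_\D(V/W)$ has kernel contained in $\Hom_\D(V/W, W)$ of $\F$-dimension $k(n-k)d$. By the inductive hypothesis the two projections contribute at most $\alpha(k,d)$ and $\alpha(n-k,d)$, and the identity $\alpha(k,d)+\alpha(n-k,d)+k(n-k)d = \alpha(n,d)$ from Subsection 1.1 closes the case.

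The irreducible case is where the real work lies, and is where the cardinality assumption $|\F| \geq dn$ must enter. My approach would be to use an evaluation map $\EV_x : \calS \to V$, $u \mapsto u(x)$, for a well-chosen $x \in V \setminus \{0\}$. Its image $\EV(\calS,x)$ meets $\F x$ trivially (by the $\F$-linearity of $\calS$ together with the trivial spectrum condition), so $\dim_\F \EV(\calS,x) \leq nd - 1$; and its kernel $\calS_x$ annihilates the whole line $x\D$, hence factors through $V/\D x$ to give $\overline{\calS_x} \subseteq \End_\D(V/\D x)$. A quick verification shows $\overline{\calS_x}$ still has $\F$-trivial spectrum: if $\overline{u}(\overline{y}) = \overline{y}$ with $\overline{y} \neq 0$, write $u(y) = y + x\mu$ for some $\mu \in \D$; then using $u(x)=0$ one computes $u(y + x\mu) = u(y) = y + x\mu$, producing a nonzero fixed vector for $u$ in $V$, a contradiction. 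By induction $\dim_\F \overline{\calS_x} \leq \alpha(n-1,d)$, while the kernel of $\calS_x \to \overline{\calS_x}$ consists of rank-$\leq 1$ endomorphisms of the form $v \mapsto x \cdot f(v)$ with $f \in \Hom_\D(V,\D)$, $f(x)=0$, hence of $\F$-dimension at most $(n-1)d$.

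Using the recurrence $\alpha(n,d) - \alpha(n-1,d) = nd - 1$, the proof reduces to the sharper inequality $\dim_\F \EV(\calS,x) + \dim_\F \Ker(\calS_x \to \overline{\calS_x}) \leq nd - 1$ for some suitable $x$. This is the main obstacle: crudely summed, the two terms contribute $(nd-1) + (n-1)d$, which is far too large. One therefore has to produce a genuine \emph{pairing} between the kernel and the image — generalising the alternating bilinear form mechanism of the $\D = \F$ case in \cite{dSPlargeaffinenonsingular} — enforced by testing the trivial spectrum condition on combinations $u + u_f$ with $u_f$ in the kernel. The hypothesis $|\F| \geq dn$ should intervene exactly here, to guarantee (by a Zariski-density argument on the variety of "good" vectors $x$) the existence of an $x$ for which enough such pairing relations hold simultaneously to close the inequality.
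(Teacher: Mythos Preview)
Your overall plan (evaluate at a vector $x$, bound the image by $nd-1$, apply induction to the kernel) matches the paper's, but there is a genuine gap at the point you flag yourself. The ``pairing'' mechanism you sketch is not what closes the argument, and as written your final paragraph is not a proof.

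The missing idea is the \emph{Tangent Space Lemma} (Lemma~\ref{lemma:tangentspace}, a corollary of the Flanders--Atkinson lemma) applied to the dual operator space $\widehat{\calS}\subseteq\Hom_\F(\calS,V)$. One does \emph{not} pick an arbitrary $x$, nor search for $x$ by a Zariski argument: one picks $x\in V\setminus\{0\}$ for which $\widehat{x}:u\mapsto u(x)$ has \emph{maximal} rank $r$ in $\widehat{\calS}$. Since $r\leq nd-1<|\F|$, the lemma applies and gives, for every $u\in\calS_x=\Ker\widehat{x}$ and every $y\in V$, that $\widehat{y}(u)=u(y)\in\im\widehat{x}=\calS x$. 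In other words, every operator in $\calS_x$ has its entire range contained in $\calS x$. As these operators are $\D$-linear, their ranges lie in the socle $H$ of $\calS x$, a $\D$-subspace not containing $x$; extend $H$ to a $\D$-hyperplane with $x\D\oplus H=V$. Then $u\in\calS_x\mapsto u_{|H}\in\End_\D(H)$ is \emph{injective}, its image has $\F$-trivial spectrum, and induction gives $\dim\calS_x\leq\alpha(n-1,d)$ directly. Equivalently, in your language: for this particular $x$ one has $\Ker(\calS_x\to\overline{\calS_x})=0$, because any $u$ in that kernel has range in $x\D\cap H=\{0\}$.

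So the cardinality hypothesis $|\F|\geq dn$ enters precisely through the Tangent Space Lemma (which needs $|\F|>r$), not through any density argument. Also note that your separate treatment of the reducible case is unnecessary: the paper's argument is uniform and makes no irreducibility assumption.
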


An immediate consequence is that when $|\F| \geq dn$ the reconstruction problem stated in the end of Section \ref{section:irreducibility}
has a positive answer in $\Mat_n(\D)$: Indeed in that case $|\F| \geq dk$ for all $k \in \lcro 1,n\rcro$, and for every partition
$n=n_1+\cdots+n_p$ the number $\sum_{k=1}^p \alpha(n_k,d)+d\sum_{1 \leq i<j \leq p} n_in_j$ equals $\alpha(n,d)$.

Next, we turn to the structure of the optimal spaces.
Here, we consider an $\F$-bilinear form $b : V^2 \rightarrow \F$ for a $\D$-vector space $V$, 
and set 
$$\calA_{b,\D}:=\calA_b \cap \End_\D(V)$$ 
(which is much smaller than the full space $\calA_b$ as we are only considering
$\D$-linear endomorphisms).

\begin{theo}[Partial classification theorem for irreducible optimal trivial spectrum subspaces]\label{theo:partialclassificationoptimal}
Let $V$ be a (right) $\D$-vector space with finite dimension $n>0$. Assume that $|\F|\geq nd$.
\begin{enumerate}[(i)]
\item If $n=1$ then the irreducible optimal trivial spectrum subspaces of $\End_\D(V)$
are the $\F$-linear hyperplanes of $\End_\D(V)$ that do not contain $\id_V$.
\item If $n \geq 2$ then every irreducible optimal trivial spectrum subspace of $\End_\D(V)$
equals $\calA_{b,\D}$ for some nonisotropic $\F$-bilinear form $b$ on $V$.
\end{enumerate}
\end{theo}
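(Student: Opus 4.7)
For $n = 1$, the conclusion is immediate: $\End_\D(V) \cong \D$ is an $\F$-algebra of dimension $d$, and under this identification an endomorphism given by right-multiplication by $\delta \in \D$ admits $\mu \in \F^\times$ as an eigenvalue if and only if $\delta = \mu$. Consequently, an $\F$-linear subspace of $\End_\D(V)$ has $\F$-trivial spectrum if and only if it avoids $\id_V$, and Theorem \ref{theo:dimMax} then identifies the optimal ones with the hyperplanes of $\End_\D(V)$ not containing $\id_V$. The irreducibility condition is vacuous here since $V$ has no nontrivial $\D$-subspace.

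For $n \geq 2$, let $\calS$ be irreducible optimal so that $\dim_\F \calS = \alpha(n,d)$. The plan is to construct a nonisotropic $\F$-bilinear form $b$ on $V$ with $\calS = \calA_{b,\D}$. The form $b$ is built pointwise: for each nonzero $x \in V$, consider $\calS \cdot x := \{u(x) : u \in \calS\}$, an $\F$-subspace of $V$ disjoint from $\F^\times x$ by the trivial spectrum hypothesis. Using the evaluation map $\EV_x : \calS \to V$, the identity $\dim_\F \calS = \alpha(n,d)$, and the $\D$-irreducibility of $\calS$, one argues that $\calS \cdot x$ has $\F$-codimension one in $V$ for every nonzero $x$, thereby specifying, up to scalar, an $\F$-linear form $\ell_x$ on $V$ with kernel $\calS \cdot x$ and with $\ell_x(x) \neq 0$.

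The heart of the proof is to show that these hyperplane data assemble into an $\F$-bilinear form $b$ after coherent normalization. The right $\D$-linearity of elements of $\calS$ gives $\calS \cdot (xd) = (\calS \cdot x)\,d$ for $d \in \D^\times$, which forces a transformation rule linking $\ell_{xd}$ to $\ell_x$. The additivity $\ell_{x+x'} \propto \ell_x + \ell_{x'}$ must then be extracted from optimality by a rigidity argument, comparing dimensions of the various $\calS \cdot y$ and using the cardinality hypothesis $|\F| \geq nd$ to avoid covering $V$ by finitely many proper $\F$-subspaces. Once $b$ is constructed, nonisotropy follows from $b(x,x) \neq 0$, the inclusion $\calS \subseteq \calA_{b,\D}$ is automatic, and equality is deduced from the dimension identity $\dim_\F \calA_{b,\D} = \alpha(n,d)$ (itself a counting verification based on the $\D$-linearity constraints imposed on $b$-alternating $\F$-endomorphisms).

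The principal obstacle I anticipate is the passage from the pointwise data $(\ell_x)$ to a true $\F$-bilinear form. In the commutative case $\D = \F$ this is the main technical content of Theorem \ref{theo:altoptimalirreducible} as proved in \cite{dSPlargeaffinenonsingular}, and the extension to general $\D$ must reconcile the $\F$-bilinearity of $b$ with the non-commutative right $\D$-action on $V$ without slipping into sesquilinearity, a balance that is genuinely delicate whenever $\D$ is non-commutative. This is also where the hypothesis $|\F| \geq nd$ is essential, both to secure sufficiently generic vectors and to control the finite unions of subspaces that arise in the rigidity argument.
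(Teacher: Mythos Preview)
Your proposal is not a proof but a plan, and the plan has at least two serious problems.

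First, the final step is wrong as stated. You write that ``equality is deduced from the dimension identity $\dim_\F \calA_{b,\D} = \alpha(n,d)$ (itself a counting verification\dots).'' This identity is \emph{not} a counting verification and is in general \emph{false}: for a generic nonisotropic $\F$-bilinear form $b$ on the $\D$-vector space $V$, the space $\calA_{b,\D}$ need not have dimension $\alpha(n,d)$. In fact Theorem~\ref{theo:fullclassificationoptimal} shows that this dimension is attained only when $\D$ is of quadratic type over $\F$ and $b$ arises from a $\sigma$-sesquilinear form. The correct way to close the argument (once an alternator $b$ is found with $\calS\subseteq\calA_{b,\D}$) is the reverse inequality $\dim\calA_{b,\D}\le\alpha(n,d)$, which follows because $\calA_{b,\D}$ is deeply intransitive (Proposition~\ref{prop:fromalternatortointransitive} and point~(a) of Theorem~\ref{theo:deepintransitive}).

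Second, and more fundamentally, you do not carry out the ``heart of the proof'': assembling the pointwise forms $\ell_x$ into a single $\F$-bilinear $b$. You flag this yourself as the principal obstacle, and you are right to be worried. The paper states explicitly (Section~\ref{section:strategy}) that the author \emph{tried} to generalize the direct method of \cite{dSPlargeaffinenonsingular}---which is essentially what you are sketching---and did not succeed for $d>1$. The difficulty is real: the transformation rule $\calS\cdot(xa)=(\calS\cdot x)a$ does relate $\ell_{xa}$ to $\ell_x$, but it does so through a quadratic twist (compare the action $b\mapsto b^a$ in the proof of Proposition~\ref{prop:dimalt1toquadratic}), and turning this into $\F$-bilinearity of $b$ on the \emph{left} variable is precisely where the quadratic-type hypothesis on $\D$ must emerge. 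Your ``rigidity argument'' is a placeholder, not an argument.

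The paper takes a completely different route. It first shows, via the Invariant Subspace Lemma (Lemma~\ref{lemma:stab}), that an irreducible optimal $\calS$ must be primitively intransitive (otherwise one constructs a nontrivial $\calS$-invariant subspace). Then Proposition~\ref{prop:primitiveintransitivitytodeep} upgrades this to deep intransitivity, and the heavy machinery of Theorem~\ref{theo:deepintransitive} (proved via generic matrices and the Flanders--Atkinson lemma) produces a right-nondegenerate alternator $b$ directly, with no need to assemble it from pointwise data. The existence of $b$, the fact that $\trk(\calS)=nd-1$, and the quadratic-type constraint on $\D$ all come out of that single engine.
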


Note that point (i) is essentially obvious.

Of course, the above two theorems look on the surface as plain generalizations of the case $\D=\F$, and in some sense they are. But there is a subtlety:
although $\calA_{b,\D}$ is always a trivial spectrum subspace of $\End_\D(V)$, in general it is not an optimal one!
Hence it is necessary to understand when optimality happens.
Some additional terminology is necessary for this.

\begin{Def}
We say that $\D$ is of \textbf{quadratic type} over $\F$ whenever one of the following conditions holds:
\begin{enumerate}[(i)]
\item $d=1$;
\item $d=2$ and $\D$ is a separable extension of $\F$;
\item $d=4$ and $\D$ is a quaternion algebra over $\F$;
\item $d=2^n$ for some $n \geq 1$, $\car(\F)=2$ and $\D$ is a \textbf{hyper-radicial}\footnote{We prefer this terminology over the one of
purely inseparable extensions of exponent $1$, since it is more compact and quite natural.} extension of $\F$, i.e., $\D$ is a field and $\forall x \in \D, \; x^2 \in \F$.
\end{enumerate}
Cases (i) to (iii) are called the \textbf{separable types}.
\end{Def}

\begin{Def}\label{def:standardinvolution}
Assume that $\D$ is of quadratic type over $\F$.
Denote by $\sigma$ either the identity of $\F$ in cases (i) and (iv), the non-identity automorphism in case (ii),
and the quaternionic conjugation in case (iii); we say that $\sigma$ is the \textbf{standard involution on $\D$.}

\label{page:reducedtrace}
In cases (i) and (ii), we denote by $\tr_{\D/\F}$ the Galois trace of $\D$ over $\F$,
and in case (iii) we denote by $\tr_{\D/\F}$ the \emph{reduced} trace $x \mapsto x+\sigma(x)$ (the reduced trace of $x$ is either $2x$ if $x$ belongs to $\F 1_\D$,
otherwise it equals the opposite of the coefficient on $t$ of the minimal polynomial of $x$ over $\F$).

We say that $(\sigma,e)$ is an \textbf{associated pair} for $\D$ over $\F$ when $\D$ is of quadratic type over $\F$,
$\sigma$ is its standard involution, and $e$ is either $\tr_{\D/\F}$ when $\D$ is of separable type, or
an arbitrary nonzero $\F$-linear form on $\D$ otherwise.
\end{Def}

Finally, if $\sigma$ is an antiautomorphism of the $\F$-algebra $\D$, we define a $\sigma$-\textbf{sesquilinear} on
$V$ as a mapping $B : V^2 \rightarrow \D$ that is right-linear and left-$\sigma$-quasilinear, meaning that
$B(x\alpha+y,-)=\sigma(\alpha)B(x,-)+B(y,-)$ and $B(-,x\alpha  +y)=B(-,x)\alpha +B(-,y)$ for all $\alpha \in \D$ and $x,y$ in $V$.
Two such forms $B$ and $B'$ are called \textbf{equivalent} if there exists an automorphism $\varphi$ of the $\D$-vector space $V$
such that $B'(\varphi(x),\varphi(y))=B(x,y)$ for all $(x,y)\in V^2$.

\begin{theo}[Full classification theorem for optimal trivial spectrum subspaces]\label{theo:fullclassificationoptimal}
Let $V$ be a (right) $\D$-vector space with finite dimension $n>0$. Assume that $|\F|\geq nd$,
and let $b$ be a nonisotropic $\F$-bilinear form on $V$.
For $\calA_{b,\D}$ to be an irreducible optimal trivial spectrum subspace of $\End_\D(V)$,
it is necessary and sufficient that both of the following conditions hold:
\begin{enumerate}[(i)]
\item $\D$ is of quadratic type over $\F$;
\item Given an associated pair $(\sigma,e)$ for $\D$ over $\F$, there is a $\sigma$-sesquilinear form $B$ on the $\D$-vector space $V$ such that
$$\forall (x,y)\in V^2, \; b(x,y)=e(B(x,y)).$$
\end{enumerate}
In that case, $B$ is uniquely determined by $b$.
\end{theo}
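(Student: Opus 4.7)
The plan is to treat sufficiency and necessity separately, dispatching the uniqueness of $B$ at the outset. For uniqueness: $\D$ is a division ring and $e \neq 0$, so the $\F$-bilinear form $(\alpha,\beta) \mapsto e(\alpha\beta)$ on $\D$ is nondegenerate. If $b = e \circ B = e \circ B'$ for two $\sigma$-sesquilinear forms, then for all $x,y \in V$ and all $\alpha \in \D$ one has $e\bigl((B(x,y)-B'(x,y))\alpha\bigr) = b(x,y\alpha) - b(x,y\alpha) = 0$, hence $B(x,y) = B'(x,y)$.

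For the sufficiency direction, assume $\D$ is of quadratic type with associated pair $(\sigma,e)$ and $b = e \circ B$ for some $\sigma$-sesquilinear $B$. Nonisotropicity of $b$ yields the $\F$-trivial spectrum property for free, so I need only verify $\dim_\F \calA_{b,\D} = \alpha(n,d)$ and irreducibility. Nondegeneracy of $b$ forces that of $B$, so $u \mapsto B_u$, with $B_u(x,y) := B(x, u(y))$, is an $\F$-linear isomorphism from $\End_\D(V)$ onto the space of $\sigma$-sesquilinear forms on $V$. Under this isomorphism, $\calA_{b,\D}$ corresponds to the subspace of those $C$ satisfying $e(C(x,x)) = 0$ for all $x \in V$. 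In a $\D$-basis this becomes $\{M \in \Mat_n(\D) : e(x^* M x) = 0 \ \forall x \in \D^n\}$ where $x^*$ denotes $\sigma$-conjugate transpose. Decomposing $M$ into its $\sigma$-Hermitian and $\sigma$-skew-Hermitian parts in the separable types, and handling case (iv) via the hyper-radicial structure, a direct dimension count yields $\alpha(n,d)$ in each case. Irreducibility then follows from optimality via Theorem \ref{theo:generalnonsense}: a proper nonzero invariant subspace would force the induced spaces on $W$ and $V/W$ to each be optimal, which is incompatible with the nondegeneracy of $B$.

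For the necessity direction, assume $\calA_{b,\D}$ is irreducible optimal, of $\F$-dimension $\alpha(n,d)$. The strategy is to extract $\sigma$, $e$, and $B$ from $b$. Introduce $H := \{\alpha \in \D : R_\alpha \in \calA_{b,\D}\} = \{\alpha \in \D : b(y, y\alpha) = 0 \ \forall y \in V\}$, where $R_\alpha : x \mapsto x\alpha$. Since $b$ is nonisotropic, $1 \notin H$, hence $\dim_\F H \leq d-1$. The first key step is to show that optimality forces $\dim_\F H = d - 1$: this is immediate for $n = 1$ (where $\calA_{b,\D} = R_H$), while for $n \geq 2$ one compares $\calA_{b,\D}$ to its "diagonal" subspace $R_H$ plus contributions coming from rank-one operators, matched against the identity $\alpha(n,d) = n(d-1) + d\binom{n}{2}$. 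Let $e$ be a nonzero $\F$-linear form on $\D$ with $\ker e = H$; then the pairing $(\alpha,\beta) \mapsto e(\alpha\beta)$ on $\D$ is nondegenerate, so for each $x,y \in V$ we define $B(x,y) \in \D$ by demanding $b(x, y\alpha) = e(B(x,y)\alpha)$ for all $\alpha \in \D$. By construction $B$ is $\F$-bilinear and right-$\D$-linear in $y$.

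The main obstacle is then to show $B$ is left-$\sigma$-quasilinear in $x$ for some antiautomorphism $\sigma$ of $\D$, and that $(\sigma,e)$ is an associated pair --- which forces $\D$ into one of the quadratic types. The leverage is that optimality supplies many $b$-alternating $\D$-linear operators beyond the $R_\alpha$. In particular, rank-one operators $x \mapsto y\lambda(x)$ (with $y \in V$ and $\lambda$ a $\D$-linear form on $V$) satisfying $b(x, y\lambda(x)) = 0$ translate, via the definition of $B$, into identities $e(B(x,y)\lambda(x)) = 0$ valid on large subsets of the dual. Combined with the substitutions $x \mapsto x\alpha$ and the cardinality hypothesis $|\F| \geq nd$ (which permits polarization and interpolation), these yield the sought functional equation $B(x\alpha,y) = \sigma(\alpha) B(x,y)$ for a well-defined map $\sigma : \D \to \D$. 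Verifying that $\sigma$ is an antiautomorphism with $\sigma^2 = \id$ and matching $e$ against the reduced trace in the separable cases is the most technical step; the classification of $\F$-algebras $\D$ carrying such an involution over $\F$ then identifies $\D$ as being of one of the quadratic types (i)--(iv), completing the proof.
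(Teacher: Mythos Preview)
Your uniqueness argument is fine, and your sufficiency sketch is close to the paper's (the paper computes the dimension via the matrix description $\calS\calH_n(\D)$ in Proposition~\ref{prop:conversequadratic}, and proves irreducibility directly by showing $\F x \oplus \calS x = V$ for every nonzero $x$, rather than via Theorem~\ref{theo:generalnonsense}; your invariant-subspace argument would need more work to be made precise).

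For necessity, however, your approach has a genuine gap. The claim that optimality forces $\dim_\F H = d-1$ is \emph{false} already for $\D=\C$, $\F=\R$, $n=2$. Take $B(x,y)=x^\star P y$ with $P=\begin{pmatrix} 1 & i \\ 0 & 1\end{pmatrix}$; then $b=\re\circ B$ is nonisotropic (since $\re(y^\star P y)=|y_1|^2+|y_2|^2-\im(\overline{y_1}\,y_2)\geq \tfrac{1}{2}(|y_1|^2+|y_2|^2)$), and $\calA_{b,\D}$ is irreducible optimal by Proposition~\ref{prop:conversequadratic}. But $y^\star P y$ takes the values $1$ (at $y=(1,0)$) and $2+i$ (at $y=(1,1)$), which $\R$-span all of $\C$, so $H=\{\alpha\in\C : \re((y^\star P y)\alpha)=0 \text{ for all } y\}=\{0\}$. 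There is then no nonzero linear form $e$ with $\Ker e = H$, and your construction of $B$ collapses. The underlying issue is that the correct $e$ (here $\re$) is fixed by the associated pair of $(\D,\F)$, not by the particular form $b$; your attempt to read $e$ off from $b$ via the scalar endomorphisms $R_\alpha$ cannot succeed when the diagonal values $B(y,y)$ already $\F$-span $\D$.

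The paper's route is entirely different: it first establishes that $\Alt(\calA_{b,\D})$ is one-dimensional (invoking the deep-intransitivity machinery of Theorem~\ref{theo:deepintransitive} for $n\geq\max(4-d,2)$, and handling small $n$ separately via Lemma~\ref{prop:dimalt<=1}). Then, for each $a\in\D$, the twisted form $b^a:(x,y)\mapsto b(xa,ya)$ is again an alternator of $\calA_{b,\D}$, hence $b^a=q(a)\,b$ for a unique scalar $q(a)\in\F$. The resulting $q:\D\to\F$ is a nonisotropic multiplicative quadratic form, and the classification of such forms (Theorem~\ref{theo:compositionalgebras}) forces $\D$ to be of quadratic type with $q(a)=a\sigma(a)$. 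Sesquilinearity of the induced $B$ then drops out of the identity $q(a)=a\sigma(a)$ in a few lines (Proposition~\ref{prop:dimalt1toquadratic}). This argument never tries to isolate $e$ from $b$; it recovers $\sigma$ globally from the one-dimensionality of the alternator space.
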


Hence, if $\D$ is of quadratic type over $\F$ with associated pair $(\sigma,e)$,
and we take any $\sigma$-sesquilinear form $B$ on $V$, the space
$\calA_{b,\D}$ is an irreducible trivial spectrum subspace of $\End_\D(V)$ with dimension $\alpha(n,d)$
provided that $B$ is \textbf{$e$-nonisotropic}, 
meaning that $\forall x \in V, \; e(B(x,x))\neq 0$
(for this very result the provision $|\F| \geq nd$ is not needed, see the proof in Section \ref{section:alternator}).

As a consequence of Theorems \ref{theo:partialclassificationoptimal} and \ref{theo:fullclassificationoptimal}, and still under the cardinality assumption from Theorem
\ref{theo:dimMax}, we can refine our understanding of the irreducible optimal trivial spectrum subspaces:

\begin{cor}
Assume that $\D$ is not of quadratic type over $\F$.
The space $\Mat_n(\D)$ contains an irreducible optimal trivial spectrum subspace only if $n=1$.
\end{cor}

In case $\D$ is of quadratic type over $\F$, the following result slightly reinforces Theorem
\ref{theo:partialclassificationoptimal} by taking the $1$-dimensional spaces into account.

\begin{theo}\label{theo:partialclassificationoptimalwith1}
Assume that $\D$ is of quadratic type over $\F$.
Let $V$ be an $n$-dimensional vector space over $\D$, with $|\F| \geq nd$.
Then every irreducible optimal trivial spectrum subspace of $\End_\D(V)$
equals $\calA_{b,\D}$ for some nonisotropic $\F$-bilinear form $b$ on $V$.
Moreover, for two such forms $b$ and $b'$, the equality
$\calA_{b,\D}=\calA_{b',\D}$ is satisfied if and only if $b'$ is a scalar multiple of $b$.
\end{theo}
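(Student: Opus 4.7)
The plan is to treat the existence and the uniqueness claims separately, reducing whenever possible to Theorems~\ref{theo:partialclassificationoptimal} and~\ref{theo:fullclassificationoptimal}. For $n\geq 2$ existence is already contained in Theorem~\ref{theo:partialclassificationoptimal}(ii), so the genuine new work concerns the case $n=1$ together with the uniqueness assertion.

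For existence when $n=1$, fix a generator $x_0$ of $V$ and use the identification $\End_\D(V)\simeq \D$ via $u\leftrightarrow \beta$, where $u(x_0)=x_0\beta$; by Theorem~\ref{theo:partialclassificationoptimal}(i), an irreducible optimal trivial spectrum subspace corresponds to an $\F$-hyperplane $H\subset \D$ missing $1$. Choose an associated pair $(\sigma,e)$ for $\D$ over $\F$. The key preliminary remark is that the $\F$-linear map $c\mapsto e(c\,\cdot\,)$ from $\D$ to its $\F$-dual is injective (hence bijective): for every nonzero $c$ one has $c\D=\D$, while $e$ is not identically zero. Consequently there is $c\in \D$, unique up to $\F^\times$, with $H=\{\beta\in \D: e(c\beta)=0\}$, and the condition $1\notin H$ reads $e(c)\neq 0$. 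I then set
$$b(x_0\alpha,x_0\gamma):=e\bigl(\sigma(\alpha)\,c\,\gamma\bigr).$$
Using that $\sigma(\alpha)\alpha\in \F$ in the separable types (and $\alpha^2\in \F$ in case (iv)), together with cyclicity of the reduced trace in the quaternionic case, a one-line calculation yields $b(x_0\alpha,x_0\alpha)=(\sigma(\alpha)\alpha)\,e(c)\neq 0$ and $b(x_0\alpha,x_0\beta\alpha)=(\sigma(\alpha)\alpha)\,e(c\beta)$, so $b$ is nonisotropic and $\calA_{b,\D}=H$.

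For uniqueness, assume $\calA_{b,\D}=\calA_{b',\D}$ with $b,b'$ nonisotropic and their common subspace irreducible optimal. Theorem~\ref{theo:fullclassificationoptimal} produces unique $\sigma$-sesquilinear forms $B$ and $B'$ on $V$ with $b=e\circ B$ and $b'=e\circ B'$. The assignment $C\mapsto e\circ C$ from $\sigma$-sesquilinear forms to $\F$-bilinear forms is injective, because any nonzero right-$\D$-linear map $V\to \D$ is surjective (the sub-right-$\D$-modules of $\D$ being $0$ and $\D$) and so cannot take values in the proper $\F$-subspace $\ker e$. The problem therefore reduces to proving $B'=\lambda B$ for some $\lambda\in \F^\times$. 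For $n=1$ this is immediate from the construction above: $B$ is encoded in the scalar $c:=B(x_0,x_0)$, and the hyperplane $\calA_{b,\D}=\{\beta: e(c\beta)=0\}$ determines $c$ modulo $\F^\times$.

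The main obstacle is the uniqueness assertion when $n\geq 2$. I would adapt the stabilizer computation that settles the case $\D=\F$ of Theorem~\ref{theo:altoptimalirreducible}(b) in \cite{dSPlargeaffinenonsingular}: after fixing a $\D$-basis, passing to Gram matrices for $B$ and $B'$, and normalizing, the equality $\calA_{b,\D}=\calA_{b',\D}$ becomes the statement that a certain invertible element of $\End_\D(V)$ left-normalizes a canonical subspace of model $\D$-endomorphisms attached to $B$ (essentially the $\sigma$-skew-Hermitian ones). For $n\geq 2$ that model subspace is, by Theorem~\ref{theo:fullclassificationoptimal} and irreducibility, rich enough that any left normalizer must commute with a generating subset of $\End_\D(V)$, hence be an $\F^\times$-scalar, giving $B'\in \F^\times B$. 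Making this rigorous uniformly across the four quadratic types — most delicately in the hyper-radicial case (iv), where $\sigma=\id$ combined with $\car\F=2$ obstructs the standard polarization tricks — is where the bulk of the technical effort will lie.
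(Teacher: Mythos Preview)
Your existence arguments are correct and essentially coincide with the paper's: for $n\geq 2$ you invoke Theorem~\ref{theo:partialclassificationoptimal}, and for $n=1$ your construction of $b$ from the hyperplane $H$ via $e(\sigma(\alpha)c\gamma)$ is the paper's construction with cosmetic relabeling.

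The gap is in your uniqueness argument for $n\geq 2$. You propose a normalizer computation on Gram matrices and openly acknowledge that carrying it out ``uniformly across the four quadratic types'' is where ``the bulk of the technical effort will lie''. This is not a proof; it is a program, and one you yourself flag as delicate in the hyper-radicial case. The paper avoids all of this with a one-line argument you are overlooking: both $b$ and $b'$ are, by definition, alternators of the common space $\calS=\calA_{b,\D}=\calA_{b',\D}$, and Proposition~\ref{prop:conversequadratic}(b) (which rests on Lemma~\ref{prop:dimalt<=1}) gives $\dim\Alt(\calS)=1$. Hence $b'\in\F b$ immediately, uniformly in all types and without any matrix computation. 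Your reduction to sesquilinear forms $B,B'$ via Theorem~\ref{theo:fullclassificationoptimal} is unnecessary for uniqueness: the alternator space already lives at the level of $\F$-bilinear forms, and the $1$-dimensionality is all you need.
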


The next result deals with the similarity class of $\calA_{b,\D}$ inside the algebra $\End_\D(V)$.

\begin{theo}\label{theo:classuptosimilarity}
Let $V$ be a (right) $\D$-vector space with finite dimension $n>0$.
Assume that $\D$ is of quadratic type over $\F$, with associated pair $(\sigma,e)$.
Let $B$ and $B'$ be $e$-nonisotropic $\sigma$-sesquilinear forms on $V$, and set $b : (x,y) \mapsto e(B(x,y))$ and
$b' : (x,y) \mapsto e(B'(x,y))$.
Then $\calA_{b,\D}$ is similar to $\calA_{b',\D}$ in the algebra $\End_\D(V)$ if and only if
$B'$ is equivalent to $\alpha B$ as a $\sigma$-sesquilinear form for some $\alpha \in \F^\times$.
\end{theo}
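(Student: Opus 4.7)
The plan is to prove the two implications separately, with almost all the substance concentrated in the necessity direction.

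For sufficiency, suppose $\varphi \in \GL_\D(V)$ satisfies $B'(\varphi(x),\varphi(y)) = \alpha\, B(x,y)$ for some $\alpha \in \F^\times$. Applying $e$ and using its $\F$-linearity yields $b'(\varphi(x),\varphi(y)) = \alpha\, b(x,y)$. For any $u \in \End_\D(V)$, specializing at $z = \varphi(x)$ transforms the condition ``$b'(z, \varphi u \varphi^{-1}(z)) = 0$ for all $z \in V$'' into ``$\alpha\, b(x, u(x)) = 0$ for all $x \in V$''; since $\alpha \neq 0$, this reads $u \in \calA_{b,\D}$. Hence $\calA_{b',\D} = \varphi\, \calA_{b,\D}\, \varphi^{-1}$.

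For necessity, suppose $\calA_{b',\D} = \varphi\, \calA_{b,\D}\, \varphi^{-1}$ for some $\varphi \in \GL_\D(V)$. Introduce the twisted forms $\tilde B(x,y) := B(\varphi^{-1}(x), \varphi^{-1}(y))$ and $\tilde b := e \circ \tilde B$. A direct check confirms that $\tilde B$ is again $\sigma$-sesquilinear, $e$-nonisotropic because $B$ is, and that $\tilde b(x,y) = b(\varphi^{-1}(x), \varphi^{-1}(y))$. The sufficiency argument applied with $\tilde b$ in place of $b'$ and $\alpha = 1$ gives $\calA_{\tilde b, \D} = \varphi\, \calA_{b,\D}\, \varphi^{-1} = \calA_{b',\D}$. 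We have thus reduced to proving: when two nonisotropic $\F$-bilinear forms $\tilde b$ and $b'$, each arising from an $e$-nonisotropic $\sigma$-sesquilinear form, share the same alternator space in $\End_\D(V)$, they must be proportional by a scalar in $\F^\times$. Applying the ``moreover'' statement of Theorem \ref{theo:partialclassificationoptimalwith1} yields $\tilde b = \alpha\, b'$ for some $\alpha \in \F^\times$. Then $e \circ \tilde B = e \circ (\alpha B')$, and the uniqueness statement of Theorem \ref{theo:fullclassificationoptimal} upgrades this to $\tilde B = \alpha B'$, i.e., $B(\varphi^{-1}(x), \varphi^{-1}(y)) = \alpha\, B'(x,y)$ for all $(x,y) \in V^2$. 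Rewriting as $B'(\varphi(u), \varphi(v)) = \alpha^{-1} B(u,v)$ shows that $B'$ is equivalent to $\alpha^{-1} B$, with $\alpha^{-1} \in \F^\times$, which finishes the proof.

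The main obstacle lies in the invocation of the ``moreover'' statement. Theorem \ref{theo:classuptosimilarity} makes no hypothesis on $|\F|$, whereas Theorem \ref{theo:partialclassificationoptimalwith1} requires $|\F| \geq nd$. To secure proportionality without that bound, one extracts the cardinality-free core of the earlier proof or runs a direct argument: writing $B'(x, y) = B(x, Sy)$ for a unique $S \in \GL_\D(V)$ (using the nondegeneracy of $B$, which follows from $e$-nonisotropy by the standard right-ideal argument), the equality $\calA_{b,\D} = \calA_{b',\D}$ translates into $S\,\calA_{b,\D} = \calA_{b,\D}$, and the combination of the large dimension $\alpha(n,d)$ of $\calA_{b,\D}$ inside $\End_\D(V)$ with the $e$-nonisotropy of $B'$ should force $S$ to be a central scalar in $\F^\times$. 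The uniqueness of $B$ from $b$ is separately cardinality-free, since any $\sigma$-sesquilinear form with values in $\ker e$ must vanish: nonzero values would generate a nonzero right ideal of $\D$, which equals $\D$ and cannot lie in $\ker e$.
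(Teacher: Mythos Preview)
Your proof is correct and follows essentially the same route as the paper's: both directions are handled by the same conjugation computation, and the necessity direction reduces to the uniqueness statement $\calA_{b,\D}=\calA_{b',\D}\Rightarrow b'\in\F^\times b$, which you (and the paper) pull from Theorem~\ref{theo:partialclassificationoptimalwith1}.

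Your observation about the cardinality hypothesis is well taken and is in fact a subtlety in the paper's own write-up: the paper also cites Theorem~\ref{theo:partialclassificationoptimalwith1}, which carries the assumption $|\F|\geq nd$, whereas Theorem~\ref{theo:classuptosimilarity} does not. The resolution is the one you hint at under ``extracting the cardinality-free core'': the uniqueness statement actually rests on point~(b) of Proposition~\ref{prop:conversequadratic}, which asserts $\Alt(\calA_{b,\D})=\F b$ with no restriction on $|\F|$. Since $\tilde b$ and $b'$ are both alternators of the common space $\calA_{\tilde b,\D}=\calA_{b',\D}$, they are proportional; the passage from $\tilde b=\alpha b'$ to $\tilde B=\alpha B'$ is, as you note, the cardinality-free uniqueness of $B$ from $b=e\circ B$ (any nonzero $\sigma$-sesquilinear form is surjective onto $\D$, hence cannot be $\Ker e$-valued). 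Your alternative $S$-argument is plausible but unnecessary once one invokes Proposition~\ref{prop:conversequadratic}(b) directly.
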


It is a good idea now to express the above results in matrix terms. So, let us consider the space $V=\D^n$, with $n \geq 1$,
and let us identify $\End_\D(V)$ with the matrix space $\Mat_n(\D)$, so that the matrix $A$ corresponds to $X \in \D^n \mapsto AX \in \D^n$.
Assume that $\D$ is of quadratic type over $\F$, with associated pair $(\sigma,e)$.
For $A \in \Mat_{n,p}(\D)$, write $A^\star=(A^\sigma)^T$ as the $\sigma$-starconjugate of $A$ (so that $(AB)^\star=B^\star A^\star$
whenever $AB$ makes sense). The $\sigma$-sesquilinear forms on $\D^n$ are the mappings of the form
$$B_P : (X,Y) \mapsto X^\star P Y \quad \text{with $P \in \Mat_n(\D)$},$$
and such a form is nondegenerate if and only if $P$ is invertible. Of course, such a form is $e$-nonisotropic only if it is nondegenerate.
Now, letting $P \in \GL_n(\D)$ and setting $b : (X,Y) \in (\D^n)^2 \mapsto e(X^\star PY)$,
the alternating space $\calA_{b,\D}$ is the set of all $A \in \Mat_n(\D)$ such that $PA$ satisfies the following equation with unknown $M$:
\begin{equation}\label{eq:sigmalt}
\forall X \in \D^n, \; e(X^\star MX)=0.
\end{equation}

\begin{prop}\label{prop:skewHermitian}
The solution set of \eqref{eq:sigmalt} is the
$\F$-linear subspace
$$\calS\calH_n(\D):=\{M \in \Mat_n(\D) : M^\star=-M \quad \text{and} \quad \forall i \in \lcro 1,n\rcro, \; e(m_{i,i})=0\}$$
of all skew-Hermitian matrices whose diagonal entries belong to $\Ker e$.
\end{prop}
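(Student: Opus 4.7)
The plan is to establish the equality of the two $\F$-linear subspaces by a standard double inclusion, exploiting one crucial algebraic identity that holds uniformly across all four quadratic types: $e\circ\sigma=e$ (trivial in cases (i) and (iv) where $\sigma=\id$, and amounting to $\sigma+\sigma^2=\id+\sigma$ in cases (ii) and (iii) where $e=\tr_{\D/\F}=\id+\sigma$). A second useful principle is the easy division-ring lemma: for every nonzero $\F$-linear form $e$ on $\D$ and every $c\in\D$, if $e(c\beta)=0$ for all $\beta\in\D$, then $c=0$ (otherwise left multiplication by $c$ would be an $\F$-linear bijection of $\D$, and composing with it would make $e$ itself vanish).

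For the inclusion $\calS\calH_n(\D)\subseteq\{M:\forall X,\ e(X^\star M X)=0\}$, I would expand $X^\star MX=\sum_{i}\sigma(x_i)m_{ii}x_i+\sum_{i<j}\bigl(\sigma(x_i)m_{ij}x_j+\sigma(x_j)m_{ji}x_i\bigr)$. Since $\sigma$ is an antiautomorphism, $\sigma\bigl(\sigma(x_i)m_{ij}x_j\bigr)=\sigma(x_j)\sigma(m_{ij})x_i$, and the hypothesis $m_{ji}=-\sigma(m_{ij})$ (coming from $M^\star=-M$) turns each off-diagonal pair into a quantity of the form $y-\sigma(y)$, killed by $e$ via $e\circ\sigma=e$. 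For the diagonal, in separable types $\sigma(m_{ii})=-m_{ii}$ combined with $e=\id+\sigma$ forces $e\bigl(\sigma(x_i)m_{ii}x_i\bigr)=0$ directly; in case (iv) the ring is commutative of characteristic $2$, so $\sigma(x_i)m_{ii}x_i=x_i^2\,m_{ii}$ with $x_i^2\in\F$, whence $\F$-linearity of $e$ and $e(m_{ii})=0$ conclude.

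For the reverse inclusion, I would probe the assumption with specific vectors. First, taking $X=E_i$ yields $e(m_{ii})=0$ for every $i$. Next, for $i\neq j$, substituting $X=E_i\alpha+E_j\beta$ and subtracting off the $X=E_i\alpha$ and $X=E_j\beta$ specializations produces $e\bigl(\sigma(\alpha)m_{ij}\beta+\sigma(\beta)m_{ji}\alpha\bigr)=0$ for all $\alpha,\beta\in\D$. Rewriting the second term via $e\circ\sigma=e$ gives $e\bigl(\sigma(\alpha)(m_{ij}+\sigma(m_{ji}))\beta\bigr)=0$; setting $\alpha=1$ and applying the division-ring lemma yields $m_{ij}=-\sigma(m_{ji})$, i.e.\ $(M^\star)_{ij}=-m_{ij}$ off the diagonal. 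For the diagonal entries, in the separable types the already-established condition $e(m_{ii})=0$ reads $m_{ii}+\sigma(m_{ii})=0$, giving $\sigma(m_{ii})=-m_{ii}$; in case (iv), $\sigma=\id$ and characteristic $2$ make $(M^\star)_{ii}=m_{ii}=-m_{ii}$ automatic. Together with $e(m_{ii})=0$, this places $M$ in $\calS\calH_n(\D)$.

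The only genuine subtlety is the uniform treatment of the hyper-radicial case (iv), where the involution is trivial: the off-diagonal argument still runs because the lemma on linear forms and division rings does not care about $\sigma$, and the diagonal condition $e(m_{ii})=0$, which is \emph{redundant} in the separable types (forced by $M^\star=-M$ through the trace formula), becomes the only non-trivial diagonal constraint and must be extracted directly from $X=E_i$. No step looks delicate beyond keeping careful track of which case is responsible for which piece of the defining conditions.
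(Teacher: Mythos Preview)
Your proof is correct and follows essentially the same line as the paper's: both directions hinge on the identity $e\circ\sigma=e$ (which the paper uses in the disguised form $e\bigl((Y^\star MX)^\star\bigr)=e(Y^\star MX)$) together with the division-ring observation that $e(c\beta)=0$ for all $\beta$ forces $c=0$. The one methodological difference is in the forward direction's handling of the diagonal terms: where you split into the separable and hyper-radicial cases, the paper treats all four types uniformly by invoking the norm identity $x_i\sigma(x_i)\in\F$ and the cyclicity $e(ab)=e(ba)$, yielding $e\bigl(\sigma(x_i)m_{ii}x_i\bigr)=e\bigl(x_i\sigma(x_i)m_{ii}\bigr)=x_i\sigma(x_i)\,e(m_{ii})=0$ in one stroke. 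Your case analysis is perfectly valid and arguably more self-contained (it does not appeal to the norm being $\F$-valued), while the paper's route is slightly more concise and highlights the structural reason the argument works across all quadratic types. A minor imprecision: your blanket claim ``$e=\id+\sigma$ in separable types'' fails in case~(i), where $e=\id$ and $\sigma=\id$; but there $m_{ii}=0$ is forced directly by $e(m_{ii})=0$, so the conclusion stands.
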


\begin{proof}Let $M \in \Mat_n(\D)$ satisfy \eqref{eq:sigmalt}.
The identity $\forall i \in \lcro 1,n\rcro, \; e(m_{i,i})=0$ is then obtained by applying \eqref{eq:sigmalt} to the vectors of the standard basis of $\D^n$.
Next, let $(X,Y)\in (\D^n)^2$. We find by polarizing that $e(X^\star MY)+e(Y^\star MX)=0$,
and hence
$$e(X^\star(M+M^\star) Y)=e(X^\star MY)+e((Y^\star M X)^\star)=e(X^\star MY)+e(Y^\star M X)=0.$$
Letting $a \in \D$ and applying this to $Ya$ instead of $Y$ leads to $X^\star(M+M^\star) Y=0$. Then, varying $X$ and $Y$ yields $M+M^\star=0$.

Conversely, let $M \in \calS\calH_n(\D)$. By the same computation as in the above, we see that the identity $M^\star=-M$
leads to $X \in \D^n \mapsto e(X^\star M X)$ being additive (its polar form vanishes).
As a consequence, for all $X=\begin{bmatrix}
x_1 & \cdots & x_n
\end{bmatrix}^T \in \D^n$, we find
$$e(X^\star M X)=\sum_{i=1}^n e(x_i^\star m_{i,i} x_i)=\sum_{i=1}^n e(\underbrace{x_ix_i^\star}_{\in \F} m_{i,i})=\sum_{i=1}^n x_ix_i^\star e(m_{i,i})=0.$$
\end{proof}

Next, we can give a clearer description of $\calS\calH_n(\D)$ by discussing the precise kind of pair $(\D,\F)$ under consideration.
\begin{itemize}
\item If $d=1$ then $\calS\calH_n(\D)=\Mata_n(\F)$, and the constraint $\forall i \in \lcro 1,n\rcro, \; m_{i,i}=0$
is critical to deal with fields with characteristic $2$.
\item If $d\geq 2$ and $\D$ is separable over $\F$ then $e=\id+\sigma$ and hence we simply have $\calS\calH_n(\D):=\{M \in \Mat_n(\D) : M^\star=-M\}$
in that case.
\item If $d \geq 2$ and $\D$ is inseparable over $\F$ then $\sigma=\id$ and
$\calS\calH_n(\D)$ is the space of all symmetric matrices of $\Mat_n(\D)$ whose diagonal entries belong to $\Ker e$ (in that case the constraint
$\forall i \in \lcro 1,n\rcro, \; e(m_{i,i})=0$ is unavoidable).
\end{itemize}

Finally, two matrices $A$ and $B$ of $\Mat_n(\D)$ will be called \textbf{homo-star-congruent} whenever 
$B=\alpha\,PAP^\star$ for some $\alpha \in \F^\times$ and some $P \in \GL_n(\D)$.

Hence, according to the type of $\D$ over $\F$, we can partially translate the previous results in matrix terms:

\begin{cor}
Assume that $\D$ is of quadratic type over $\F$, with associated pair $(\sigma,e)$. Assume that $|\F| \geq nd$.
Then:
\begin{enumerate}[(a)]
\item The irreducible optimal trivial spectrum subspaces of $\Mat_n(\D)$ are
the spaces of the form $P^{-1} \calS\calH_n(\D)$ where $P \in \GL_n(\D)$ is $e$-nonisotropic.
\item Given $e$-nonisotropic matrices $P_1$ and $P_2$ in $\GL_n(\D)$, the spaces $P_1^{-1}\calS\calH_n(\D)$ and $P_2^{-1} \calS\calH_n(\D)$ are
similar if and only if $P_2$ is homo-star-congruent to $P_1$.
\end{enumerate}
\end{cor}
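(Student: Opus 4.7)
The plan is to translate the abstract results, Theorems \ref{theo:partialclassificationoptimalwith1}, \ref{theo:fullclassificationoptimal} and \ref{theo:classuptosimilarity}, into the matrix language set up in the discussion preceding Proposition \ref{prop:skewHermitian}. The key dictionary is: via the canonical identification $\End_\D(\D^n) \simeq \Mat_n(\D)$, every nondegenerate $\sigma$-sesquilinear form on $\D^n$ is $B_P : (X,Y) \mapsto X^\star P Y$ for a unique $P \in \GL_n(\D)$, and the bilinear form $b(X,Y) = e(X^\star P Y)$ is nonisotropic precisely when $P$ is $e$-nonisotropic.

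For part (a), I would argue as follows. By Theorem \ref{theo:partialclassificationoptimalwith1}, every irreducible optimal $\F$-trivial spectrum subspace of $\Mat_n(\D)$ is of the form $\calA_{b,\D}$ for a nonisotropic $\F$-bilinear form $b$ on $\D^n$. By Theorem \ref{theo:fullclassificationoptimal}, optimality forces $\D$ to be of quadratic type and $b = e \circ B$ for a (necessarily nondegenerate, since $b$ is nonisotropic) $\sigma$-sesquilinear form $B$, that is $B = B_P$ for some $P \in \GL_n(\D)$. For such a $b$, a matrix $A \in \Mat_n(\D)$ lies in $\calA_{b,\D}$ if and only if $e(X^\star P A X) = 0$ for all $X \in \D^n$, i.e.\ $PA$ satisfies \eqref{eq:sigmalt}; by Proposition \ref{prop:skewHermitian} this means $PA \in \calS\calH_n(\D)$, hence $\calA_{b,\D} = P^{-1} \calS\calH_n(\D)$. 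Conversely, for any $e$-nonisotropic $P \in \GL_n(\D)$ the form $b(X,Y) = e(X^\star P Y)$ is nonisotropic, and the same chain of equivalences shows that $P^{-1} \calS\calH_n(\D) = \calA_{b,\D}$ satisfies the sufficient condition of Theorem \ref{theo:fullclassificationoptimal} (the statement following it) for being irreducible optimal.

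For part (b), I would invoke Theorem \ref{theo:classuptosimilarity} directly. With $B = B_{P_1}$ and $B' = B_{P_2}$, the condition that $B'$ be equivalent to $\alpha B$ for some $\alpha \in \F^\times$ unfolds as the existence of $\varphi \in \GL_\D(\D^n)$, represented by some matrix $R \in \GL_n(\D)$, such that $(RX)^\star P_2 (RY) = \alpha\, X^\star P_1 Y$ for all $X,Y$, that is $R^\star P_2 R = \alpha P_1$, equivalently $P_2 = \alpha (R^{-1})^\star P_1 R^{-1}$. Setting $Q = (R^{-1})^\star$ (which ranges over all of $\GL_n(\D)$ as $R$ does, since $\star$ is an antiautomorphism of $\Mat_n(\D)$), this is exactly the condition that $P_2$ be star-congruent to $\alpha P_1$.

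The only potentially delicate step is the verification that the sufficiency direction in (a) is actually available, since we must know that $P^{-1} \calS\calH_n(\D)$ has the right dimension $\alpha(n,d)$ and is irreducible; but this is precisely what the remark following Theorem \ref{theo:fullclassificationoptimal} asserts (and, as noted there, the cardinality hypothesis $|\F|\geq nd$ is not even needed for it). All other steps are routine bookkeeping, so I do not anticipate any serious obstacle.
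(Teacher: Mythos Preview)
Your proposal is correct and is precisely the intended derivation: the paper presents this corollary without a separate proof, as the matrix translation of Theorems \ref{theo:partialclassificationoptimalwith1}, \ref{theo:fullclassificationoptimal} and \ref{theo:classuptosimilarity} via the dictionary $B_P(X,Y)=X^\star PY$ and Proposition \ref{prop:skewHermitian}. Your bookkeeping for part (b), rewriting equivalence of $B_{P_2}$ with $\alpha B_{P_1}$ as $P_2=Q(\alpha P_1)Q^\star$ with $Q=(R^{-1})^\star$, is exactly right.
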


At this point, the reader might be puzzled by the hyper-radicial case and doubt the existence of
such kinds of optimal trivial spectrum spaces. Yet, they actually exist in arbitrary dimensions. Take for instance, for $n \geq 2$ and $p \geq 1$,
a list $(X_1,\dots,X_{n+p})$ of algebraically independent indeterminates over $\F_2$, and consider
the field $\D=\F_2(X_1,\dots,X_{n+p})$ and the subfields
$$\F:=\F_2(X_1,\dots,X_n,(X_{n+1})^2,\dots,(X_{n+p})^2)$$
and
$$\K:=\F_2((X_1)^2,\dots,(X_n)^2,(X_{n+1})^2,\dots,(X_{n+p})^2).$$
Classically $[\D:\F]=2^p$, $[\F: \K]=2^n$
and $\K$ is the set of all squares of elements of $\D$; moreover $X_1,\dots,X_n$ are linearly independent over $\K$.
Now, take an arbitrary $\F$-linear form $e$ on $\D$ that maps $X_1,\dots,X_n$ to themselves,
and consider the diagonal matrix $P:=\Diag(X_1,\dots,X_n)$. For all $Y=\begin{bmatrix}
y_1 & \cdots & y_n
\end{bmatrix}^T \in \D^n$
we have $e(Y^TPY)=\sum_{i=1}^p (y_i)^2 X_i$, which vanishes if and only if $y_1=\cdots=y_n=0$.
Hence $P$ is $e$-nonisotropic. Therefore $P^{-1} \calS\calH_n(\D)$ is an irreducible optimal trivial spectrum subspace of $\Mat_n(\D)$, and
both $n$ and $p$ can be taken as large as we wish.

Let us conclude by combining Theorem \ref{theo:generalnonsense} with the previous results, so as to give a full description of the optimal
trivial spectrum spaces. We distinguish between two cases, whether $\D$ is of quadratic type over $\F$ or not.

\begin{theo}\label{theo:coro1}
Assume that $\D$ is not of quadratic type over $\F$.
Let $V$ be an $n$-dimensional vector space over $\D$, with $|\F| \geq nd$.
Let $S$ be an optimal trivial spectrum subspace of $\End_\D(V)$.
Then the $S$-invariant $\D$-linear subspaces form a complete flag $(V_0,\dots,V_n)$ of $V$,
and $S=H_1 \vee \cdots \vee H_n$, where for each $i \in \lcro 1,n\rcro$,  $H_i$ is an $\F$-linear hyperplane of $\End_\D(V_i/V_{i-1})$
that does not contain the identity of $V_i/V_{i-1}$.

Conversely, given a complete flag $(V_0,\dots,V_n)$ of $V$, and for all $i \in \lcro 1,n\rcro$ an $\F$-linear hyperplane $H_i$ of $V_i/V_{i-1}$
that does not contain the identity of $V_i/V_{i-1}$, the space $H_1 \vee \cdots \vee H_n$ is an optimal trivial spectrum subspace of $\End_\D(V)$.
\end{theo}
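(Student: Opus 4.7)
The plan is to derive this result as a direct corollary of the structure theorem \ref{theo:generalnonsense}, combined with the fact that, when $\D$ is not of quadratic type, irreducible optimal $\F$-trivial spectrum subspaces exist only in $\D$-dimension~$1$.

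For the forward direction, I start with an optimal $\F$-trivial spectrum subspace $\calS$ of $\End_\D(V)$. By Theorem \ref{theo:generalnonsense}, the $\calS$-invariant subspaces form a flag $V_0 \subset V_1 \subset \cdots \subset V_p = V$ and $\calS = \calS_1 \vee \cdots \vee \calS_p$, where each $\calS_i$ is an irreducible optimal $\F$-trivial spectrum subspace of $\End_\D(V_i/V_{i-1})$. Set $n_i := \dim_\D(V_i/V_{i-1})$; since $n_i \leq n$ we have $|\F| \geq n_i d$, so the classification results apply on each quotient. The assumption that $\D$ is not of quadratic type, combined with the corollary stated right after Theorem \ref{theo:fullclassificationoptimal}, forces $n_i = 1$ for every~$i$. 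Hence $p = n$, the flag is complete, and each $\calS_i$ is an irreducible optimal $\F$-trivial spectrum subspace of $\End_\D(V_i/V_{i-1})$ on a one-dimensional $\D$-space; by point~(i) of Theorem \ref{theo:partialclassificationoptimal}, it is an $\F$-linear hyperplane $H_i$ of $\End_\D(V_i/V_{i-1})$ not containing the identity.

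For the converse, I fix a complete flag $(V_0,\dots,V_n)$ together with hyperplanes $H_i$ as in the statement, and consider $\calS := H_1 \vee \cdots \vee H_n$. To verify the $\F$-trivial spectrum property, I take any $u \in \calS$ and iterate the strong inheritance property recalled in Section \ref{section:introgeneralnonsense} along the flag: $u$ has $\F$-trivial spectrum if and only if the induced endomorphism $u_i \in H_i$ on each one-dimensional quotient $V_i/V_{i-1}$ does. On a one-dimensional $\D$-space every nonzero vector is an eigenvector of $u_i$, so $u_i$ admits $1$ as eigenvalue precisely when $u_i = \id_{V_i/V_{i-1}}$, which is ruled out by $\id \notin H_i$. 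For the dimension, each hyperplane $H_i$ contributes $d-1$, and the arbitrary strictly-upper-triangular blocks contribute $d\binom{n}{2}$ since the partition is $1+\cdots+1$; summing, $\dim_\F \calS = n(d-1) + d\binom{n}{2} = \alpha(n,d)$, which equals the maximum given by Theorem \ref{theo:dimMax}, so $\calS$ is optimal.

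The argument is essentially an assembly of earlier results; the only subtlety worth pausing on is the one-dimensional trivial-spectrum check, which hinges on the slightly unusual fact that on a one-dimensional $\D$-space an endomorphism admits $1$ as eigenvalue if and only if it equals the identity, so that the hyperplane condition ``$\id \notin H_i$'' alone is enough to rule out the eigenvalue~$1$ for \emph{every} element of $H_i$ (and, by inheritance, for every element of $\calS$).
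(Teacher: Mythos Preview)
Your proof is correct and follows exactly the approach the paper intends: Theorem~\ref{theo:coro1} is stated there as an immediate combination of Theorem~\ref{theo:generalnonsense} with the corollary following Theorem~\ref{theo:fullclassificationoptimal} and point~(i) of Theorem~\ref{theo:partialclassificationoptimal}, and you have spelled this out faithfully. One small presentational point: in the converse, the line ``$u$ has $\F$-trivial spectrum if and only if each $u_i$ does'' is stronger than what you actually verify (an individual $u_i$ corresponding to some $\lambda\in\F^\times\setminus\{1\}$ fails trivial spectrum yet lies in $H_i$); the clean route, which your final paragraph in fact takes, is to check directly that no $u\in\calS$ has eigenvalue~$1$ via the block-triangular structure.
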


Here is a restatement in matrix terms:

\begin{theo}\label{theo:classifnonquadratic}
Let $n \geq 1$ be such that $|\F| \geq nd$, and assume that $\D$ is not of quadratic type over $\F$.

The optimal trivial spectrum subspaces of $\Mat_n(\D)$
are the spaces that are similar to a space of the form
$H_1 \vee \cdots \vee H_n$
where $H_1,\dots,H_n$ are $\F$-linear hyperplanes of $\D$ that do not contain $1_\D$.
Two such spaces $H_1 \vee \cdots \vee H_n$ and $H'_1 \vee \cdots \vee H'_n$ are similar in the algebra $\Mat_n(\D)$
if and only if, for all $i\in \lcro 1,n\rcro$, the hyperplanes $H_i$ and $H'_i$ are conjugated in $\D$.
\end{theo}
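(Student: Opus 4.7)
The plan is to translate Theorem \ref{theo:coro1} into matrix language, with the conjugacy classification being the only new content. I would work on the right $\D$-vector space $V=\D^n$, identifying $\End_\D(V)$ with $\Mat_n(\D)$ acting on columns, and I would take as the standard complete flag the spaces $V_i=\{X \in \D^n \mid x_{i+1}=\cdots=x_n=0\}$. Applying Theorem \ref{theo:coro1} directly yields, after conjugating by some element of $\GL_n(\D)$ (sending the unique maximal invariant flag of $\calS$ to the standard complete flag), that $\calS$ is of the form $H_1 \vee \cdots \vee H_n$ for certain $\F$-linear hyperplanes $H_i$ of $\End_\D(V_i/V_{i-1})$ not containing $\id_{V_i/V_{i-1}}$.

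The key ring-theoretic identification is that each quotient $V_i/V_{i-1}$ is a one-dimensional right $\D$-vector space, so $\End_\D(V_i/V_{i-1})$ is canonically isomorphic to $\D$ via $a \mapsto L_a$ (left multiplication), with $\id$ corresponding to $1_\D$. Under this identification an $\F$-hyperplane of $\End_\D(V_i/V_{i-1})$ not containing $\id$ becomes an $\F$-hyperplane of $\D$ not containing $1_\D$, giving the first half of the statement. The converse (that every such joint is an optimal $\F$-trivial spectrum subspace) is immediate from the converse part of Theorem \ref{theo:coro1}.

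The main work is the conjugacy criterion. For the forward direction, suppose $P \in \GL_n(\D)$ satisfies $P(H_1 \vee \cdots \vee H_n)P^{-1}=H'_1 \vee \cdots \vee H'_n$. Both sides are optimal $\F$-trivial spectrum subspaces, so by the uniqueness assertion inside Theorem \ref{theo:coro1} the two spaces have the same invariant complete flag, namely $(V_0,\dots,V_n)$; hence $P$ sends each $V_i$ to itself, which forces $P$ to be block upper-triangular with diagonal entries $p_1,\dots,p_n \in \D^\times$. A direct computation then shows that for any $M=(m_{i,j}) \in H_1 \vee \cdots \vee H_n$, the matrix $PMP^{-1}$ has diagonal entries $p_i \alpha_i p_i^{-1}$ (where $\alpha_i \in H_i$) while the strictly upper entries are arbitrary because $m \mapsto p_i m p_j^{-1}$ is a bijection of $\D$; comparing diagonals term by term yields $H'_i=p_i H_i p_i^{-1}$ for every $i$. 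Conversely, if $H'_i=p_i H_i p_i^{-1}$ for suitable $p_i \in \D^\times$, then $P:=\Diag(p_1,\dots,p_n)$ does the job, by the same computation.

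The only delicate point, which I expect to be the main obstacle to state cleanly, is the uniqueness argument used in the forward direction: one must invoke the fact that an optimal $\F$-trivial spectrum subspace has a unique maximal invariant flag, and that under the hypothesis $|\F| \geq nd$ together with $\D$ not of quadratic type, this flag is complete (so the block decomposition of $P$ has $1\times 1$ diagonal blocks over $\D$). Both facts are already contained in Theorem \ref{theo:coro1}, so no new work is required beyond invoking them.
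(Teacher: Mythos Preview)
Your proof is correct and matches the paper's implicit approach: the paper presents Theorem~\ref{theo:classifnonquadratic} as a direct matrix restatement of Theorem~\ref{theo:coro1}, with the conjugacy criterion following from the general uniqueness statement in Section~\ref{section:introgeneralnonsense} (each diagonal block $\calM_i$ is determined up to similarity, and similarity in $\Mat_1(\D)\simeq\D$ is exactly conjugation in $\D$). Your argument---using the uniqueness of the invariant flag to force $P$ upper-triangular, then reading off $H'_i=p_iH_ip_i^{-1}$ from the diagonal---is precisely the concrete unpacking of that general principle, and nothing more is needed.
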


Hence, if $\D$ is not of quadratic type over $\F$ (and the cardinality of $\F$ is large enough) the classification of 
optimal trivial spectrum subspaces of $\Mat_n(\D)$ is reduced to the classification of the $\F$-linear hyperplanes of $\D$
that do not contain $1_\D$, up to conjugation. The latter classification is directly connected with the classification of 
conjugacy classes of elements in $\D$ by duality. To see this, consider the composite $\Tr : \D \rightarrow \F$
of the reduced trace of $\D$ over its center $C$ and of the Galois trace $\Tr_{C/\F}$: this is a nonzero $\F$-linear mapping that
satisfies $\Tr(xy)=\Tr(yx)$ for all $x,y$ in $\D$. Then $a \mapsto [b \mapsto \Tr(ab)]$ induces an $\F$-linear isomorphism
from $\D$ to the dual space of the $\F$-vector space $\D$. It is then easily checked that this yields a bijective correspondence 
between the $\F$-linear hyperplanes of $\D$ and the equivalence classes in $\D^\times$ for the relation $a \cong b$ defined by
$\exists \lambda \in \F^\times : \exists u \in \D^\times : b=\lambda\,u a u^{-1}$.
The $\F$-linear hyperplanes that do not contain $1_\D$ correspond to the elements of $\D$ with non-vanishing trace. Finally, by the Skolem-Noether theorem, 
two elements of $\D$ are conjugated if and only if they have the same minimal polynomial over the center $C$.

We finish with the case where $\D$ is of quadratic type over $\F$.

\begin{theo}\label{theo:coro3}
Assume that $\D$ is of quadratic type over $\F$, with associated pair $(\sigma,e)$.
Let $V$ be a $\D$-vector space with finite dimension $n>0$, where $|\F|\geq nd$, and $S$ be an optimal trivial spectrum subspace of $\End_\D(V)$.

Then the $S$-invariant $\D$-linear subspaces constitute a (potentially incomplete) flag $(V_0,V_1,\dots,V_p)$,
and for each $i \in \lcro 1,p\rcro$ there exists an $e$-nonisotropic $\sigma$-sesquilinear form
$B_i$ on $V_i/V_{i-1}$ such that, for $b_i:=e \circ B_i$,
$$S=\calA_{b_1,\D} \vee \cdots \vee \calA_{b_p,\D}.$$
Moreover, each form $B_i$ is uniquely determined by $S$ up to multiplication by an element of $\F^\times$,
and the equivalence class of $B_i$ is uniquely determined by the similarity class of $S$ in $\End_\D(V)$ up to multiplication by an element of $\F^\times$.

Conversely, given a (potentially incomplete) flag $(V_0,V_1,\dots,V_p)$, and for each
$i \in \lcro 1,p\rcro$ an $e$-nonisotropic $\sigma$-sesquilinear form
$B_i$ on $V_i/V_{i-1}$, with associated $\F$-bilinear form $b_i:=e \circ B_i$, the space
$\calA_{b_1,\D} \vee \cdots \vee \calA_{b_p,\D}$ is an optimal trivial spectrum subspace of $\End_\D(V)$.
\end{theo}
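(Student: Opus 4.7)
The plan is to assemble the statement from the structural results already established. First I would invoke Proposition \ref{prop:flag} to get that the $\calS$-invariant subspaces constitute a flag $V_0 \subset V_1 \subset \cdots \subset V_p = V$, then apply Theorem \ref{theo:generalnonsense} to write $\calS = \calS_1 \vee \cdots \vee \calS_p$ with each $\calS_i$ an irreducible optimal $\F$-trivial spectrum subspace of $\End_\D(V_i/V_{i-1})$. Setting $n_i := \dim_\D(V_i/V_{i-1}) \leq n$, the cardinality hypothesis $|\F| \geq nd$ passes to $|\F| \geq n_i d$, so Theorem \ref{theo:partialclassificationoptimalwith1} yields a nonisotropic $\F$-bilinear form $b_i$ on $V_i/V_{i-1}$, unique up to a nonzero scalar, such that $\calS_i = \calA_{b_i,\D}$.

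Next I would apply Theorem \ref{theo:fullclassificationoptimal} to each factor $\calA_{b_i,\D}$, which produces a unique $\sigma$-sesquilinear form $B_i$ on $V_i/V_{i-1}$ satisfying $b_i = e \circ B_i$; the identity $b_i(x,x) = e(B_i(x,x))$ then gives the $e$-nonisotropy of $B_i$ for free. Uniqueness of $B_i$ up to an element of $\F^\times$ follows by combining the uniqueness of $b_i$ up to scalar with the $\F$-linear injectivity of the correspondence $B \mapsto e \circ B$. For the conjugacy part, I would argue that if $\varphi \in \GL_\D(V)$ sends $\calS$ to another optimal space $\calS'$ (with associated flag $V'_0 \subset \cdots \subset V'_p = V$ and forms $B'_i$), then because each flag is canonically attached to its space via Proposition \ref{prop:flag}, $\varphi$ must map $V_i$ onto $V'_i$ for every $i$, inducing isomorphisms $\bar\varphi_i : V_i/V_{i-1} \to V'_i/V'_{i-1}$ that conjugate $\calA_{b_i,\D}$ to $\calA_{b'_i,\D}$; Theorem \ref{theo:classuptosimilarity} applied factor by factor then forces $B'_i$ to be equivalent to $\alpha_i B_i$ for some $\alpha_i \in \F^\times$.

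For the converse, starting from a flag and $e$-nonisotropic forms $B_i$ with $b_i := e \circ B_i$, I would invoke the remark that follows Theorem \ref{theo:fullclassificationoptimal} to see that each $\calA_{b_i,\D}$ is an irreducible $\F$-trivial spectrum subspace of $\End_\D(V_i/V_{i-1})$ of dimension $\alpha(n_i,d)$. The strong inheritance condition recalled in Section \ref{section:introgeneralnonsense} ensures that the joint $\calA_{b_1,\D} \vee \cdots \vee \calA_{b_p,\D}$ still has $\F$-trivial spectrum, and its $\F$-dimension is
$$\sum_{k=1}^p \alpha(n_k,d) + d\sum_{1 \leq i<j \leq p} n_i n_j = \alpha(n,d)$$
by the additivity identity from the introduction, so Theorem \ref{theo:dimMax} certifies optimality. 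The main subtlety I anticipate lies in the conjugacy-invariance part: the crux is to observe that any conjugation between two optimal spaces is forced to respect their canonical flags, which is what makes Theorem \ref{theo:classuptosimilarity} applicable factor by factor; the rest is a mechanical assembly of the earlier results together with the dimension identity.
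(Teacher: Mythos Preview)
Your proposal is correct and follows exactly the assembly the paper intends: the paper does not give a standalone proof of this theorem but presents it as the outcome of combining Proposition~\ref{prop:flag} and Theorem~\ref{theo:generalnonsense} with Theorems~\ref{theo:partialclassificationoptimalwith1}, \ref{theo:fullclassificationoptimal}, \ref{theo:classuptosimilarity} and the additivity identity $\sum_k \alpha(n_k,d)+d\sum_{i<j} n_in_j=\alpha(n,d)$, which is precisely what you do. The only cosmetic point is that Theorem~\ref{theo:classuptosimilarity} is stated for two forms on the \emph{same} space, so when you apply it ``factor by factor'' you should first transport $B'_i$ back to $V_i/V_{i-1}$ via the induced isomorphism $\bar\varphi_i$; this is routine and does not affect the argument.
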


We finish with a restatement in matrix terms:

\begin{theo}\label{theo:coro4}
Assume that $\D$ is of quadratic type over $\F$, with associated pair $(\sigma,e)$.

Let $n \geq 1$ be such that $|\F| \geq nd$. For every optimal trivial spectrum subspace $\calM$ of $\Mat_n(\D)$, there exists
a partition $n=n_1+\cdots+n_p$ into positive integers and a list $(P_1,\dots,P_p) \in \GL_{n_1}(\D) \times \cdots \times \GL_{n_p}(\D)$
of $e$-nonisotropic matrices such that
$$\calM \simeq P_1^{-1}\calS\calH_{n_1}(\D) \vee \cdots \vee P_p^{-1}\calS\calH_{n_p}(\D).$$
The integers $n_1,\dots,n_p$ are uniquely determined by $n$, and for each $i \in \lcro 1,p\rcro$
the matrix $P_i$ is uniquely determined by $\calM$ up to homo-star-congruence.

Conversely, for any partition $n=n_1+\cdots+n_p$ and any list $(P_1,\dots,P_p) \in \GL_{n_1}(\D) \times \cdots \times \GL_{n_p}(\F)$
of $e$-nonisotropic matrices, the space $ P_1^{-1}\calS\calH_{n_1}(\D) \vee \cdots \vee P_p^{-1}\calS\calH_{n_p}(\D)$, as well as all its conjugates in $\Mat_n(\D)$,
is an optimal trivial spectrum subspace of $\Mat_n(\D)$.
\end{theo}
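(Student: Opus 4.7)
The plan is to derive Theorem \ref{theo:coro4} directly from the intrinsic version Theorem \ref{theo:coro3} by choosing a $\D$-basis of $\D^n$ adapted to the flag of $\calS$-invariant subspaces, and then to use Proposition \ref{prop:skewHermitian} to pass from the alternating-space description $\calA_{b,\D}$ to the matrix description $P^{-1}\calS\calH_n(\D)$.

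For existence, identify $\Mat_n(\D)$ with $\End_\D(\D^n)$ and apply Theorem \ref{theo:coro3} to $\calM$, obtaining a flag $V_0 \subset \cdots \subset V_p = \D^n$ and $e$-nonisotropic $\sigma$-sesquilinear forms $B_i$ on $V_i/V_{i-1}$ whose associated $\F$-bilinear forms $b_i := e \circ B_i$ yield $\calM = \calA_{b_1,\D} \vee \cdots \vee \calA_{b_p,\D}$. Pick a $\D$-basis of $\D^n$ adapted to the flag and let $n_i := \dim_\D(V_i/V_{i-1})$; conjugating by the corresponding change-of-basis matrix puts $\calM$ in block-upper-triangular joint form. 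In each block of size $n_i$, the sesquilinear form $B_i$ is represented by some matrix $P_i \in \Mat_{n_i}(\D)$ via $B_i(X,Y) = X^\star P_i Y$; nondegeneracy of $B_i$ forces $P_i \in \GL_{n_i}(\D)$, and the $e$-nonisotropy of $B_i$ is exactly that of $P_i$. Then Proposition \ref{prop:skewHermitian} (applied block by block) identifies the $i$-th diagonal block $\calA_{b_i,\D}$ with $P_i^{-1}\calS\calH_{n_i}(\D)$, giving the desired similarity.

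For uniqueness, the integers $n_1,\dots,n_p$ are intrinsic because the $\calM$-invariant flag is intrinsic (Proposition \ref{prop:flag} applied to the similar intrinsic subspace), and the dimensions of its successive quotients are conjugacy invariants. For uniqueness of each $P_i$ up to star-congruence and $\F^\times$-scaling, invoke the second uniqueness clause of Theorem \ref{theo:coro3}: the equivalence class of $B_i$ is determined by the conjugacy class of $\calS$ up to multiplication by $\F^\times$. Under the identification $B_i \leftrightarrow P_i$, equivalence of sesquilinear forms corresponds exactly to star-congruence of matrices (since $B_{P_i}\circ(\varphi,\varphi) = B_{Q^\star P_i Q}$ when $\varphi$ has matrix $Q$), and scaling $B_i$ by $\alpha \in \F^\times$ corresponds to replacing $P_i$ by $\alpha P_i$ (note $\sigma(\alpha) = \alpha$ since $\alpha \in \F$); this gives the stated uniqueness.

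The converse is the quickest part: given the data $(n_1,\dots,n_p)$ and $(P_1,\dots,P_p)$, defining $b_i(X,Y) := e(X^\star P_i Y)$ gives $e$-nonisotropic $\sigma$-sesquilinear forms $B_i = B_{P_i}$ on $\D^{n_i}$, so Theorem \ref{theo:coro3} yields that the intrinsic joint $\calA_{b_1,\D}\vee\cdots\vee\calA_{b_p,\D}$ is an optimal $\F$-trivial spectrum subspace; another application of Proposition \ref{prop:skewHermitian} translates this joint into $P_1^{-1}\calS\calH_{n_1}(\D)\vee\cdots\vee P_p^{-1}\calS\calH_{n_p}(\D)$, and conjugation preserves optimality. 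The main technical point to watch is the passage between equivalence of sesquilinear forms and star-congruence of Gram matrices through a change-of-basis that respects the flag; but since the flag is already fixed at the moment the $P_i$ are defined, the residual freedom is precisely right-multiplication by block-diagonal invertible matrices, which induces star-congruence within each block, matching the stated indeterminacy.
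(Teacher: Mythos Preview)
Your proof is correct and follows exactly the approach the paper intends: Theorem \ref{theo:coro4} is presented in the paper as a direct matrix restatement of Theorem \ref{theo:coro3}, and your derivation---choosing a $\D$-basis adapted to the invariant flag, representing each $B_i$ by its Gram matrix $P_i$, and invoking Proposition \ref{prop:skewHermitian} to identify $\calA_{b_i,\D}$ with $P_i^{-1}\calS\calH_{n_i}(\D)$---is precisely this translation spelled out. Your handling of the uniqueness (equivalence of sesquilinear forms $\leftrightarrow$ star-congruence of Gram matrices, $\F^\times$-scaling of $B_i$ $\leftrightarrow$ $\F^\times$-scaling of $P_i$) is also the correct dictionary.
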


The corresponding results for the initial problem of optimal unital affine subspaces will be stated and proved in Section \ref{section:affinenonsingular}.

\subsection{The strategy, and intransitive operator spaces}\label{section:strategy}

Let us now explain the main strategy for proving our new results on trivial spectrum spaces.

So far, we have not succeeded in generalizing the techniques of \cite{dSPgivenrank,dSPlargeaffinenonsingular} to the case $d>1$.
This has led us to reconsider an alternative proof of Theorem \ref{theo:altoptimalirreducible}, limited to fields with more than $n-1$ elements,
which we discovered ten years ago \cite{dSPAtkinsontoGerstenhaber}. Critical to the proof is the observation that the trivial spectrum property implies a lack of
\emph{transitivity} from the said spaces of operators, meaning that if $S$ has trivial spectrum then $x\not\in S x$ for every nonzero vector $x \in V \setminus \{0\}$, and in particular $S x \subsetneq V$.
The key insight is then to introduce the \textbf{dual operator space} $\widehat{S}$ of $S$, which consists of all the evaluation mappings
$$\widehat{x} : u \in S \mapsto u(x) \in V, \quad \text{with $x \in V$.}$$
\label{pageref:dualoperatorspace}
Then $\widehat{S}$ is a linear subspace of $\Hom_\F(S,V)$, and every element of $\widehat{S}$ is nonsurjective and hence
has its rank less than $\dim_\F V$. The key in \cite{dSPAtkinsontoGerstenhaber} was to rely upon the theory of \emph{primitive} spaces of bounded rank matrices, studied first by Atkinson and Lloyd \cite{AtkLloydPrim,AtkinsonPrim} and rediscovered a few years later by Eisenbud and Harris \cite{EisenbudHarris} (see also the recent \cite{HuangLandsberg}). Thanks to Atkinson's classification of primitive matrix spaces that satisfy an extremal condition between the numbers of rows and columns (theorem C of \cite{AtkinsonPrim}), and more precisely to our own generalization to \emph{semi-primitive} spaces (see section 5 of \cite{dSPLLD2} and a restatement as theorem 3.1 in \cite{dSPAtkinsontoGerstenhaber}), it was then possible to find an easy derivation of Theorem \ref{theo:altoptimalirreducible} in case $|\F|\geq n$ (an unavoidable assumption if one wants to use Atkinson's theorem).

Yet, for the case $d>1$ a major issue is that the space $\widehat{S}$ is far from satisfying Atkinson's extremal condition between the respective
dimensions of the source and target spaces, even if $S$ is optimal; hence nothing can be obtained from the theory of primitive spaces of bounded rank matrices.
Nevertheless, we discovered that a version of Atkinson's theorem (theorem 5.3 of \cite{dSPLLD2}) can be entirely reformulated in the viewpoint of spaces of operators that lack transitivity, as we shall now see.

A few basic definitions and notation will help:

\begin{Not}\label{notation:StoV'}
Let $U$ and $V$ be $\D$-vector spaces and $V'$ be a $\D$-linear subspace of $V$.
Given a subset $S$ of $\Hom_\D(U,V)$, we denote by $S^{V'}:=S \cap \Hom_\D(U,V')$
the subset consisting of the elements of $S$ that map into $V'$.
\end{Not}

\begin{Def}
Let $U$ and $V$ be $\D$-vector spaces and $S$ be an $\F$-linear subspace of $\Hom_\D(U,V)$.
We say that $S$ is:
\begin{itemize}
\item \textbf{intransitive} whenever $\forall x \in U, \; S x \neq V$ ;
\item \textbf{deeply intransitive} whenever it is intransitive and, for every \emph{nonzero} linear subspace $V'$ of $V$, the set $S^{V'}$ is intransitive as an $\F$-linear subspace of $\Hom_\D(U,V')$.
\end{itemize}
In any case, the \textbf{transitive rank} $\trk(S)$ of $S$ is defined as the greatest possible dimension $\dim_\F(S x)$ when $x$ ranges over $U$.
\end{Def}
Note that $S$ is intransitive only if $V \neq \{0\}$, and in the definition of deep intransitivity we can replace
the condition that $S$ be intransitive with the one that $V \neq \{0\}$.

It is obvious that if $S$ is deeply intransitive then so is $S^{V'}$ for every nonzero $\D$-linear subspace $V'$ of $V$.
Constructing large deeply intransitive spaces can be achieved by using nonzero bilinear forms, as we shall now explain.

\begin{Def}
Let $U$ and $V$ be $\D$-vector spaces and $S$ be an $\F$-linear subspace of $\Hom_\D(U,V)$.
An $\F$-bilinear form $b : U \times V \rightarrow \F$ is called an \textbf{alternator} of $S$ whenever
$$\forall x \in U, \; \forall f \in S, \; b(x,f(x))=0.$$
The set of all such forms is denoted by $\Alt(S)$ (it is obviously a linear subspace of the set of all $\F$-bilinear forms on $U \times V$).
\end{Def}

\label{page:radicals}
For an $\F$-bilinear form $b : U \times V \rightarrow \F$, recall that the \textbf{left radical} of $b$ is the $\F$-linear subspace
$$\Lrad(b):=\{x \in U : \; b(x,-)=0\} \subseteq U,$$
and the \textbf{right radical} of $b$ is the $\F$-linear subspace
$$\Rrad(b):=\{y \in V : \; b(-,y)=0\} \subseteq V.$$
We say that $b$ is \textbf{right-nondegenerate} whenever $\Rrad(b)=\{0\}$.
Now, given $b$, we define $\calA_b$ as the set of all $\F$-linear mappings $f : U \rightarrow V$ such that
$\forall x \in U, \; b(x,f(x))=0$, and
$$\calA_{b,\D}:=\Hom_\D(U,V) \cap \calA_b$$
as the subset of all such mappings that are $\D$-linear.

We now make two key observations, which we gather in the next proposition.

\begin{prop}\label{prop:fromalternatortointransitive}
Let $U$ and $V$ be $\D$-vector spaces and $S$ be an $\F$-linear subspace of $\Hom_\D(U,V)$.
\begin{enumerate}[(a)]
\item If $S$ has a nonzero alternator then $S$ is intransitive.
\item If $S$ has a right-nondegenerate alternator and $V \neq \{0\}$ then $S$ is deeply intransitive.
\end{enumerate}
\end{prop}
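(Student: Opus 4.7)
The plan is to obtain statement (a) via a simple dichotomy on whether $x$ lies in the left radical of the alternator, and to obtain statement (b) by a restriction argument that reduces it to (a).

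For part (a), let $b$ be a nonzero alternator of $\calS$. First, I would polarize the alternator equation: for $x,y \in U$ and $f \in \calS$, expanding $0 = b(x+y,f(x+y))$ and cancelling the two diagonal terms yields the identity
$$b(x,f(y)) + b(y,f(x)) = 0.$$
Since $b \neq 0$, neither $\Lrad(b) = U$ nor $\Rrad(b) = V$ can hold. Now fix $x \in U$. If $x \notin \Lrad(b)$, then from the defining identity $b(x,f(x)) = 0$ we obtain $\calS x \subseteq \{v \in V : b(x,v)=0\}$, a proper $\F$-linear subspace of $V$, so $\calS x \neq V$. If $x \in \Lrad(b)$, then the polarized identity $b(x,f(y)) + b(y,f(x)) = 0$ collapses to $b(y,f(x)) = 0$ for every $y \in U$ and every $f \in \calS$; thus $f(x) \in \Rrad(b)$, and so $\calS x \subseteq \Rrad(b) \subsetneq V$. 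In either case $\calS x \neq V$, which gives intransitivity.

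For part (b), suppose $b$ is a right-nondegenerate alternator of $\calS$, and let $V'$ be any nonzero $\D$-linear subspace of $V$. I would observe that the restricted form $b|_{U \times V'}$ is automatically an alternator for $\calS^{V'}$: indeed, for $f \in \calS^{V'}$, the value $f(x)$ lies in $V'$ and still satisfies $b(x,f(x)) = 0$. Moreover $b|_{U \times V'}$ is nonzero, because $b|_{U \times V'} = 0$ would force $V' \subseteq \Rrad(b) = \{0\}$, contradicting $V' \neq \{0\}$. Applying part (a) to $\calS^{V'}$, viewed as an $\F$-linear subspace of $\Hom_\D(U,V')$, yields the intransitivity of $\calS^{V'}$; as this holds for every nonzero $\D$-linear subspace $V'$, $\calS$ is deeply intransitive.

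The proof involves essentially no obstacle beyond the edge case $x \in \Lrad(b)$ in part (a); that case is the only place where the polarization identity is genuinely needed rather than the raw alternator condition, and it is what prevents the argument from being an entirely one-line consequence of $\calS x \subseteq \{v : b(x,v)=0\}$.
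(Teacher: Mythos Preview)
Your proof is correct and follows essentially the same approach as the paper's: polarize the alternator identity, split into the two cases $x\in\Lrad(b)$ and $x\notin\Lrad(b)$ for part (a), and for part (b) restrict $b$ to $U\times V'$ and invoke part (a). The only cosmetic difference is that the paper phrases the non-radical case as $\calS x\subseteq\Ker b(x,-)$, an $\F$-linear hyperplane of $V$, whereas you just call it a proper subspace; both are fine.
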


\begin{proof}
Assume that $S$ has a nonzero alternator $b$. It follows by polarizing that
$\forall (x,x')\in U^2, \; \forall f \in S, \;  b(x,f(x'))+b(x',f(x))=0$, from which we observe that all the elements of $S$
map $\Lrad(b)$ into $\Rrad(b)$. The latter is not the full space $V$ because $b$ is nonzero.
Now, let $x \in U$. If $x \in \Lrad(b)$ then $S x \subseteq \Rrad(b) \subset V$.
Otherwise $S x$ is included in $\Ker b(x,-)$, an $\F$-linear hyperplane of $V$.
Hence $S$ is intransitive.

Assume now that $S$ has a right-nondegenerate alternator $b$ and $V \neq \{0\}$.
Let $V'$ be a nonzero $\D$-linear subspace of $V$. The restriction $b'$ of $b$ to $U \times V'$ is nonzero because
$b$ is right-nondegenerate, and obviously $b'$ is an alternator of $S^{V'}$. Hence $S^{V'}$ is intransitive, by point (a).
\end{proof}

Hence, whenever we take a right-nondegenerate bilinear form $b : U \times V \rightarrow \F$ and $V \neq \{0\}$,
every $\F$-linear subspace of $\calA_{b,\D}$ is deeply intransitive.
When $\D=\F$, Atkinson's theorem can be essentially summed up in saying that all large enough operator spaces are of this kind.
Let us now be more precise:

\begin{theo}[Atkinson's theorem for deeply intransitive operator spaces]
Let $U$ and $V$ be finite-dimensional $\F$-vector spaces and $S$ be a deeply intransitive linear subspace of $\Hom_\F(U,V)$.
Set $n:=\dim V$ and assume that $|\F| \geq n$.
Then:
\begin{enumerate}[(a)]
\item $\dim S \leq \frac{n(n-1)}{2}$, and even $\dim S \leq \frac{(n-1)(n-2)}{2}$ if $\trk(S)<n-1$.
\item If $\dim S \geq \frac{n(n-1)}{2}- (n-3)$ then $S$ has a right-nondegenerate alternator $b$,
and $\Alt(S)=\F b$.
\end{enumerate}
\end{theo}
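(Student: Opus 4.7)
The statement is essentially a reformulation, in the language of operator spaces, of the semi-primitive version of Atkinson's theorem on primitive spaces of bounded-rank matrices (theorem 5.3 of \cite{dSPLLD2}, restated as theorem 3.1 of \cite{dSPAtkinsontoGerstenhaber}). My approach is to mirror \cite{dSPAtkinsontoGerstenhaber}: translate deep intransitivity of $\calS$ into a semi-primitivity hypothesis on the dual operator space $\widehat{\calS}$, apply the matrix theorem, and then translate back.

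First I would set up the duality. Viewing $\widehat{\calS}$ as a subspace of $\Hom_\F(\calS,V)$, the rank of $\widehat{x}$ is exactly $\dim_\F(\calS x)$, so the maximum rank achieved over $\widehat{\calS}$ equals $\trk(\calS)$, and intransitivity forces this maximum to be at most $n-1$. Fixing bases, $\widehat{\calS}$ is presented as a matrix space with $n$ rows, $\dim \calS$ columns, and rank bounded by $n-1$.

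Second I would check that deep intransitivity is precisely the semi-primitivity hypothesis that Atkinson's theorem requires on $\widehat{\calS}$. The semi-primitive condition forbids compressions of a matrix space via a nonzero subspace of the codomain; the fact that $\calS^{V'}$ is intransitive for \emph{every} nonzero subspace $V' \subseteq V$ (i.e.\ that every $\widehat{x}$, viewed as a map $\calS^{V'} \to V'$, is nonsurjective) is exactly what rules out any such compression on $\widehat{\calS}$. This is the key translation that makes Atkinson's theorem applicable. Finally, I would invoke the semi-primitive matrix theorem under the cardinality hypothesis $|\F| \geq n$: it bounds the number of columns of such a primitive matrix space, yielding $\dim \calS \leq \binom{n}{2}$, with the sharpening $\dim \calS \leq \binom{n-1}{2}$ in the case $\trk(\calS)<n-1$, thereby proving $(a)$. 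For $(b)$, the near-maximal case in Atkinson's theorem produces a unique-up-to-scalar right-nondegenerate alternating bilinear pairing that encodes $\widehat{\calS}$; transposing this pairing along the duality $x \leftrightarrow \widehat{x}$ yields a right-nondegenerate $\F$-bilinear alternator $b : U \times V \to \F$ of $\calS$, with uniqueness of the pairing giving $\dim \Alt(\calS)=1$.

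The main obstacle is the second step: verifying that deep intransitivity of $\calS$ corresponds \emph{exactly} to the semi-primitivity hypothesis on $\widehat{\calS}$ (not merely to a sufficient variant). This is a careful bookkeeping exercise on both sides of the duality $U \leftrightarrow \widehat{\calS}$, and is precisely why one must use the semi-primitive strengthening of Atkinson's theorem from \cite{dSPLLD2}, rather than its original primitive form --- the semi-primitive formulation being calibrated so that this equivalence holds and so that the extremal near-maximal case produces the alternator needed in $(b)$.
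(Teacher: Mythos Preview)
Your approach is correct and matches the paper's treatment: in Section~\ref{section:strategy} the theorem is presented precisely as a reformulation of theorem~5.3 of \cite{dSPLLD2} (restated as theorem~3.1 in \cite{dSPAtkinsontoGerstenhaber}), via exactly the translation you describe --- deep intransitivity of $\calS$ is equivalent to Atkinson and Lloyd's \emph{row condition} on the matrix space representing $\widehat{\calS}$, the dual operator space $\widehat{\calS}$ is automatically source-reduced, and failure of target-reduction is handled by passing to the span of the ranges. The paper does not give a separate formal proof of this $\D=\F$ statement beyond that discussion; it instead proves the generalization Theorem~\ref{theo:deepintransitive} from scratch in Sections~\ref{section:majodim} and~\ref{section:bigproof}, which specialized to $d=1$ yields an independent self-contained argument.
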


As an extension of point (b), we note, with the same polarization argument as in the proof of Proposition \ref{prop:fromalternatortointransitive},
that all the operators in $S$ then vanish on the left radical $\Lrad(b)$, and hence
$S$ can naturally be identified with a linear subspace of $\Hom_\F(U/\Lrad(b),V)$,
for which the form induced by $b$ on $(U/\Lrad(b)) \times V$ is a nondegenerate alternator.
This fundamentally reduces the situation to the one where $\dim U=\dim V$, in which case we could go further and simply consider spaces of endomorphisms of $V$.

We will now make several comments on Atkinson's theorem. First of all, we must state that Atkinson's formulation of the theorem
has several additional assumptions that turn out to be useless.
Atkinson frames his result in the language of matrix spaces with bounded rank, and makes an assumption of \emph{primitivity}.
Primitivity consists of as many as four conditions that roughly mean that the rank constraint cannot be easily localized.
In contrast, here we replace all four assumptions in the definition of primitivity by a single assumption, which is deep intransitivity
(plain intransitivity is of course a far too weak hypothesis, allowing, e.g., $S$ to be the space of all operators from $U$ to a given linear hyperplane of $V$).
The deep intransitivity assumption is a dual counterpart of what Atkinson and Lloyd \cite{AtkLloydPrim} called the \emph{row property} for matrix spaces with bounded rank.
The row property for such a matrix space $\calM \subseteq \Mat_{n,p}(\F)$ means that, whenever we have invertible matrices
$P \in \GL_n(\F)$ and $Q \in \GL_p(\F)$, and integers $r \in \lcro 1,n\rcro$ and $s \in \lcro 1,p\rcro$ such that every matrix $M$ of $\calM$
looks like
$$M=P \begin{bmatrix}
[?]_{r \times s} & K(M) \\
[?]_{(n-r) \times s} & [0]_{(n-r) \times (p-s)}
\end{bmatrix} Q,$$
all the matrices in the extracted matrix space $K(\calM)$ have their rank less than $r$ (the number of rows).

Now, if we take an $\F$-linear subspace of $\Hom_\F(U,V)$, consider its dual operator space
$\widehat{S} \subseteq \Hom_\F(S,V)$, and represent the latter by a matrix space $\calM$ in bases of $S$ and $V$,
the row condition on $\calM$ is equivalent to the fact that whenever we take a nonzero linear subspace $S'$ of $S$
and we know that all the operators in $S'$ map into some nonzero $\F$-linear subspace $V'$ of $V$, then
the rank of $s' \in S' \mapsto s'(x) \in V'$ is less than the dimension of $V'$ for all $x \in U$, which is exactly saying that
$S'$ is intransitive as a linear subspace of mappings from $U$ to $V'$.
Hence, $\calM$ has the row property if and only if $S$ is deeply intransitive.

Among the four conditions that define primitivity under the Atkinson-Lloyd definition, two are worth mentioning:

\begin{Def}
An operator space $S \subseteq \Hom_\F(U,V)$ between $\F$-vector spaces $U$ and $V$
is called:
\begin{itemize}
\item \textbf{source-reduced} when $\underset{f \in S}{\bigcap} \Ker f=\{0\}$, i.e., no nonzero vector is annihilated by all the
operators in $S$;
\item \textbf{target-reduced} when $\underset{f \in S}{\sum} \im f=V$, i.e., no proper linear subspace of $V$ includes the range of all the operators in $S$.
\end{itemize}
Finally, $S$ is called \textbf{reduced} when it is both source-reduced and target-reduced.
\end{Def}

Atkinson's theorem is frequently formulated by adding the provision that the operator space $S$ under consideration is reduced
(see e.g., \cite{dSPLLD2}). However, this assumption is not necessary for the formulation in terms of intransitive spaces.
Indeed, on the one hand the dual operator space $\widehat{S}$ of an operator space $S \subseteq \Hom(U,V)$
is automatically source-reduced (as only the zero operator can be annihilated by all the evaluation mappings).
On the other hand, if $S$ is not target-reduced we can take the sum $V'$ of all the ranges of the operators in $S$,
then observe that $\widehat{S}$ can be viewed as a reduced subspace of $\Hom(S,V')$; applying point (a) in Atkinson's theorem then
readily shows that $\dim S \leq \frac{\dim V'(\dim V'-1)}{2}\cdot$

Atkinson's theorem deals with vector spaces over $\F$, but here we need to extend the situation to $\F$-linear subspaces of $\D$-linear mappings.
In that context, it makes no sense to try to extend Atkinson's formulation, as by representing the operators of $\widehat{S}$ by matrices with entries in $\F$,
we miss the critical fact that the operators in $S$ are $\D$-linear, and not only $\F$-linear. The correct way to generalize the result seems to be
as a theorem on intransitive operator spaces. We now state the generalization:

\begin{theo}[Main theorem on deeply intransitive operator spaces]\label{theo:deepintransitive}
Let $U$ and $V$ be finite-dimensional $\D$-vector spaces and $S$ be a deeply intransitive $\F$-linear subspace of $\Hom_\D(U,V)$.
Set $n:=\dim_\D V$ and assume that $|\F| \geq nd$.
Then:
\begin{enumerate}[(a)]
\item $\dim S \leq \alpha(n,d)$.
\item $\dim S \leq \alpha(n,d)-n+\max(2-d,0)$ if $\trk(S)<nd-1$.
\item If $\dim S \geq \alpha(n,d)-n+\max(4-d,2)$ then $S$ has a right-nondegenerate alternator $b$, and $\Alt(S)=\F b$.
\end{enumerate}
\end{theo}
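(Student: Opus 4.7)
The plan is to proceed by induction on $n=\dim_\D V$, emulating the strategy used to prove the $d=1$ version stated just above as Atkinson's theorem for $\F$-deeply intransitive operator spaces. The extra degree $d$ forces one to track the $\D$-linearity of the elements of $\calS$ in order to sharpen the bound from $\binom{nd}{2}$ down to $\alpha(n,d)$; the cardinality assumption $|\F|\geq nd$ is inherited from Atkinson's theorem and is used to pick generic elements at each stage of the induction.

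For the base case $n=1$, one has $\alpha(1,d)=d-1$, and deep intransitivity reduces to intransitivity since the only nonzero $\D$-subspace of $V$ is $V$ itself. Identifying $V$ with $\D$, the space $\calS$ is an $\F$-subspace of $\Hom_\D(U,\D)$ in which every evaluation $\EV_x:\calS\to \D$ is non-surjective, and the bound $\dim\calS\leq d-1$ follows by a direct argument; the refinements in (b) and (c) amount to straightforward case analyses.

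For the inductive step, I would pick $x_0\in U$ attaining $r:=\trk(\calS)=\dim_\F(\calS x_0)$, let $V_0$ be the smallest $\D$-subspace of $V$ containing $\calS x_0$, and set $m:=\dim_\D V_0$. When $V_0$ is a proper nontrivial $\D$-subspace, the induction hypothesis applies to $\calS^{V_0}\subseteq\Hom_\D(U,V_0)$ (which is deeply intransitive), yielding $\dim\calS^{V_0}\leq\alpha(m,d)$; combining this with a bound on the codimension of $\calS^{V_0}$ in $\calS$ (extracted by a second application of the induction hypothesis to $V/V_0$) should reach $\dim\calS\leq\alpha(n,d)$ via the additivity identity $\alpha(m,d)+\alpha(n-m,d)+md(n-m)=\alpha(n,d)$ from Section~1.1. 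When $V_0=V$ the $\D$-span of $\calS x_0$ is all of $V$ and one argues directly using the dual operator space $\widehat{\calS}$. The quantitative improvement in (b) comes from the extra slack in the complementary bound when $r<nd-1$, and (c) is extracted from the equality-case analysis of the inductive step: extremality forces $\Alt(\calS)=\F b$ with $b$ right-nondegenerate.

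The main obstacle will be the sharpness. A direct application of the $d=1$ Atkinson theorem to $\calS$ viewed as an $\F$-subspace of $\Hom_\F(U_\F,V_\F)$ yields only $\dim\calS\leq\binom{nd}{2}$, which strictly exceeds $\alpha(n,d)$ when $d>1$; so the $\D$-linearity must be exploited beyond the trivial remark that $\calS^{V''}=\calS^{V''_\D}$ for the largest $\D$-subspace $V''_\D\subseteq V''$. The relevant extra rigidity is that in the dual operator space $\widehat\calS\subseteq\Hom_\F(\calS,V)$ one has the relation $\widehat{xa}=R_a\circ\widehat{x}$ for every $a\in\D$, making $\widehat\calS$ a module over $R_\D\cong\D^{\mathrm{op}}$, and that every $f\in\calS$ has $\F$-rank divisible by $d$; these are exactly the additional constraints handled by the generalisation of Atkinson's theorem in Theorem~5.3 of \cite{dSPLLD2}, which the author explicitly signals as the key input in Section~1.4. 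The second delicate point is that deep intransitivity need not descend mechanically to the quotient $\calS/\calS^{V_0}\hookrightarrow\Hom_\D(U,V/V_0)$ (the condition $\calS^{W'}x\neq W'$ in $V$ need not force its image in $W'/V_0$ to be proper), so the inductive reduction must be set up carefully, presumably by using the $\D^{\mathrm{op}}$-module structure on $\widehat\calS$ to control the image. I would expect the bulk of the technical work to consist in matching the present intransitivity formulation to Theorem~5.3 of \cite{dSPLLD2}, and in extracting from it both the quantitative sharpening in (b) and the uniqueness of the alternator in (c).
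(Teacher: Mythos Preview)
Your inductive scaffolding for (a) has the right shape but the wrong pivot: you take $V_0$ to be the $\D$-\emph{span} of $\calS x_0$, which can easily be all of $V$ (since $\dim_\F \calS x_0\leq nd-1$ forces nothing on the $\D$-span), leaving you in the ``argue directly'' case with no argument. The paper instead uses the \emph{socle} $H$ of $\calS x_0$ (the largest $\D$-subspace \emph{contained} in $\calS x_0$), which is always a proper $\D$-subspace. The reason $H$ is relevant is the Tangent Space Lemma: the subspace $\calS'=\{f\in\calS:f(x_0)=0\}$ maps into $\calS x_0$, hence (by $\D$-linearity of its elements) into $H$; then $\dim\calS=r+\dim\calS'$ with $r\leq nd-1$ and $\dim\calS'\leq\alpha(n-1,d)$ by induction. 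No quotient $V/V_0$ enters, and the descent problem you flagged simply does not arise.

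For (b) and (c) your plan to reduce to Theorem~5.3 of \cite{dSPLLD2} is a dead end: the paper states explicitly (Section~\ref{section:strategy}) that for $d>1$ the dual space $\widehat\calS$ is ``far from satisfying Atkinson's extremal condition\dots\ hence nothing can be obtained from the theory of primitive spaces of bounded rank matrices.'' Instead the paper runs a fresh generic-matrix argument: one represents $\widehat\calS$ as a generic matrix $\mathbf{M}$, applies the Flanders--Atkinson identities $\mathbf{B}\mathbf{A}^k\mathbf{C}=0$, and then uses the Factorization Lemma to show inductively that $\trk(\calS)=nd-1$ (giving (b)) and to manufacture a catcher of $\mathbf{M}$ (giving a nonzero alternator). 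The genuinely new $\D$-specific ingredient is Claim~\ref{claim:compoundrankd}, which shows a certain $d\times(n-1)d$ block has full rank $d$; its proof exploits exactly the $\D^{\mathrm{op}}$-action you noticed, but in a concrete way---replacing $x$ by $xa^{-1}$ so that $\calS(xa^{-1})=(\calS x)a^{-1}$ gives a one-parameter family of $\F$-hyperplanes all containing the same socle $H$, and this forces the rank. Right-nondegeneracy then follows from Proposition~\ref{prop:dimalt1toquadratic} together with the dimension bound from (a) applied to both $\calS^R$ and $\pi\calS$.
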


\begin{Rem}\label{rem:n=2}
Due to point (a), the inequality $\dim S \geq \alpha(n,d)-n+\max(4-d,2)$ of point (c) requires that $n \geq 2$,
and even that $n \geq 3$ if $d=1$.
\end{Rem}

The next theorem further elucidates the structure of the alternators:

\begin{theo}\label{theo:deepintransitive2}
Let $U$ and $V$ be $\D$-vector spaces and $S$ be a deeply intransitive $\F$-linear subspace of $\Hom_\D(U,V)$.
Set $n:=\dim_\D V$, assume that $|\F| \geq nd$, that $S$ has a right-nondegenerate alternator $b$ and that
$\dim S \geq \alpha(n,d)-n+\max(4-d,2)$.

Then $\D$ has quadratic type over $\F$, and for any associated pair $(\sigma,e)$ the form
$b$ equals $(x,y) \mapsto e(B(x,y))$ for a unique right-nondegenerate $\sigma$-sesquilinear form $B$ on $U \times V$,
and of course $S \subseteq \calA_{b,\D}$.
\end{theo}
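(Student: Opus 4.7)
My plan is to exploit the $\D$-linearity of elements of $\calS$ to manufacture a family of alternators from $b$, extract from it a multiplicative quadratic form on $\D$ that forces $\D$ to be of quadratic type, and finally construct $B$ via an $e$-duality argument on $\D$.

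For each $\alpha \in \D$, I would consider the $\F$-bilinear form $c_\alpha : (x,y) \mapsto b(x\alpha, y\alpha)$. Using the $\D$-linearity of operators in $\calS$, $c_\alpha(x, f(x)) = b(x\alpha, f(x)\alpha) = b(x\alpha, f(x\alpha)) = 0$, so $c_\alpha \in \Alt(\calS)$. By Theorem~\ref{theo:deepintransitive}(c), $\Alt(\calS) = \F b$, hence $c_\alpha = \lambda(\alpha)\, b$ for a unique $\lambda(\alpha) \in \F$. Expanding $c_{\alpha+\beta}$, $c_{t\alpha}$ (for $t \in \F$) and $c_{\alpha\beta}$ shows that $\lambda$ is an $\F$-quadratic form on $\D$ with $\lambda(1)=1$ and $\lambda(\alpha\beta) = \lambda(\alpha)\lambda(\beta)$, and (since $\D$ is a division ring) anisotropic; polarisation of the defining identity yields
$$b(x\alpha, y\beta) + b(x\beta, y\alpha) = \mu(\alpha, \beta)\, b(x, y), \quad (\star)$$
where $\mu$ is the polar form of $\lambda$.

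Setting $T(\alpha) := \mu(\alpha, 1) \in \F$ and iterating the $\beta=1$ case of $(\star)$ to compute $b(x\alpha^2, y)$, and using the $\F$-bilinearity of $b$ to pull central scalars through its first slot, I would obtain $b\bigl(x\cdot(\alpha^2 - T(\alpha)\alpha + \lambda(\alpha)), y\bigr) = 0$ for all $x, y$. Right-nondegeneracy of $b$ and the fact that $\D$ is a division ring then force the quadratic identity $\alpha^2 = T(\alpha)\alpha - \lambda(\alpha)$ for every $\alpha \in \D$. Setting $\sigma(\alpha) := T(\alpha) - \alpha$, this identity together with the twice-polarised multiplicativity of $\lambda$ (which gives $T(\alpha\beta) = T(\alpha)T(\beta) - \mu(\alpha, \beta)$) shows that $\sigma$ is an $\F$-linear antihomomorphic involution of $\D$ that fixes $\F$ pointwise and satisfies $\alpha + \sigma(\alpha),\, \alpha\sigma(\alpha) \in \F$. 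The classification of division $\F$-algebras carrying such a standard involution (developed in Section~\ref{section:basicD}) then identifies $\D$ as being of quadratic type, with standard involution $\sigma$.

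Fix now any associated pair $(\sigma, e)$. In each case of quadratic type, the $\F$-bilinear pairing $(a, a') \mapsto e(aa')$ on $\D$ is nondegenerate and $e$ is cyclic, i.e.\ $e(aa') = e(a'a)$. For each $(x, y) \in U \times V$, the $\F$-linear form $\alpha \mapsto b(x, y\alpha)$ is therefore uniquely represented by a single element $B(x, y) \in \D$ via $b(x, y\alpha) = e(B(x, y)\alpha)$. Setting $\alpha = 1$ yields $b = e \circ B$, and $\F$-bilinearity together with right-$\D$-linearity of $B$ in $y$ follow by uniqueness. For the left-$\sigma$-quasilinearity $B(x\gamma, y) = \sigma(\gamma) B(x, y)$, I would apply $(\star)$ with $\alpha \to \gamma$, $\beta \to 1$, and $y \to y\alpha$; then, using $T(\gamma) = \gamma + \sigma(\gamma)$ and cyclicity of $e$, a short manipulation rewrites $b(x\gamma, y\alpha)$ as $e(\sigma(\gamma) B(x, y)\alpha)$, and uniqueness of the $B$-representation delivers the desired identity. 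Right-nondegeneracy of $B$ is inherited from that of $b$ (a nonzero $y \in \Rrad(B)$ would give $b(-, y) = e(B(-, y)) = 0$), uniqueness of $B$ stems from nondegeneracy of the $e$-pairing, and $\calS \subseteq \calA_{b, \D}$ is just a restatement of the alternator hypothesis for $\D$-linear operators.

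The main obstacle is the algebraic passage from $(\star)$ to quadratic type: deriving the quadratic identity in $\D$, verifying that the candidate $\sigma$ is genuinely an antihomomorphic involution, and invoking the classification of division algebras with a standard involution. Once this is in hand, the construction of $B$ and the verification of its sesquilinear identities are a routine consequence of the cyclicity and nondegeneracy of the $e$-pairing on $\D$.
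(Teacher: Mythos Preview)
Your approach is essentially the same as the paper's: both exploit the $\D$-action $b\mapsto b(\,\cdot\,\alpha,\,\cdot\,\alpha)$ on $\Alt(\calS)$, use the one-dimensionality of $\Alt(\calS)$ (from Theorem~\ref{theo:deepintransitive}(c)) to extract a multiplicative nonisotropic quadratic form $\lambda$ on $\D$, conclude quadratic type, and then build $B$ by $e$-duality and check sesquilinearity via the identity $b(x\alpha,y\alpha)=\lambda(\alpha)b(x,y)$.

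The only substantive difference is packaging. The paper isolates the step ``multiplicative nonisotropic quadratic form on $\D$ $\Rightarrow$ quadratic type with $q(\alpha)=\alpha\sigma(\alpha)$'' as the standalone Theorem~\ref{theo:compositionalgebras} and simply cites it (this is the content of Proposition~\ref{prop:dimalt1toquadratic}), whereas you partially reprove it inline by deriving the quadratic identity $\alpha^2-T(\alpha)\alpha+\lambda(\alpha)=0$ directly from $(\star)$ and the nonvanishing of $b$. Your computation is correct, but note that your appeal to ``Section~\ref{section:basicD}'' for the final classification is misdirected: that section only discusses traces, and the result you actually need (that a division $\F$-algebra with a standard involution has $d\in\{1,2,4\}$ or is hyper-radicial) is precisely Theorem~\ref{theo:compositionalgebras}. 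Citing it from the outset would spare you the inline derivation of the quadratic identity and of the antihomomorphism property of $\sigma$.
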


Here is a converse statement:

\begin{prop}\label{prop:deepintransitiveconverse}
Assume that $\D$ has quadratic type over $\F$, let $(\sigma,e)$ be an associated pair, and let
$B : U \times V \rightarrow \D$ be a right-nondegenerate $\sigma$-sesquilinear form.
Set $b : (x,y) \in U \times V \mapsto e(B(x,y))$. Then $\calA_{b,\D}$ has dimension $\alpha(n,d)$ (over $\F$), and it is deeply intransitive.
\end{prop}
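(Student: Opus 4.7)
My strategy is to derive the deep intransitivity from Proposition~\ref{prop:fromalternatortointransitive} and to compute the dimension by reducing to Proposition~\ref{prop:skewHermitian}. The form $b$ is an alternator of $\calA_{b,\D}$ by construction, and the key technical observation opening the proof is this: for every fixed $x \in U$ (resp.\ $y \in V$) the image of $B(x,-):V \to \D$ (resp.\ $B(-,y):U \to \D$) is a \emph{left ideal} of $\D$. For the first map this follows from the $\D$-linearity of $B$ in its second argument; for the second it follows from the left-$\sigma$-quasilinearity of $B$ in its first argument together with the bijectivity of $\sigma$. Since $\D$ is a division ring, each such image is either $\{0\}$ or $\D$; and the option $\D$ is incompatible with containment in $\Ker e$ because $e \neq 0$ on $\D$. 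Applied to $y \in \Rrad(b)$ (resp.\ $x \in \Lrad(b)$), this immediately gives $\Rrad(b) = \{0\}$ (using right-nondegeneracy of $B$) and $\Lrad(b) = L := \{x \in U : B(x,-) = 0\}$. Combined with $V \neq \{0\}$ (forced by $n > 0$), the first of these yields the deep intransitivity of $\calA_{b,\D}$ through Proposition~\ref{prop:fromalternatortointransitive}(b).

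For the dimension I plan to reduce to the situation where $B$ is $\D$-nondegenerate on both sides. Polarizing the defining identity $b(x,f(x)) = 0$ yields $b(x,f(y)) + b(y,f(x)) = 0$ for every $f \in \calA_{b,\D}$ and every $(x,y) \in U^2$; applied to $x \in \Lrad(b) = L$ this forces $f(x) \in \Rrad(b) = \{0\}$. Hence every $f \in \calA_{b,\D}$ vanishes on $L$ and factors uniquely through $U/L$, and the resulting assignment is an $\F$-linear isomorphism $\calA_{b,\D} \simeq \calA_{\bar b, \D}$, where $\bar b = e \circ \bar B$ and $\bar B$ is the form descended onto $(U/L) \times V$, which is now two-sided $\D$-nondegenerate. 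In particular $\dim_\D (U/L) = \dim_\D V = n$.

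Choosing bases to identify $U/L$ and $V$ with $\D^n$, the form $\bar B$ reads $(X,Y) \mapsto X^\star P Y$ for some $P \in \GL_n(\D)$. Writing $f \in \Hom_\D(\D^n,\D^n)$ as $X \mapsto AX$ with $A \in \Mat_n(\D)$, the condition $e(X^\star PAX) = 0$ for every $X$ becomes, by Proposition~\ref{prop:skewHermitian}, equivalent to $PA \in \calS\calH_n(\D)$; hence $\calA_{\bar b, \D} = P^{-1}\calS\calH_n(\D)$, and the problem reduces to the identity $\dim_\F \calS\calH_n(\D) = \alpha(n,d)$. This last is obtained by a short case-by-case count on the quadratic type of $\D$ over $\F$: the strictly upper-triangular entries always contribute $d\binom{n}{2}$, and each diagonal entry $m_{i,i}$, subject to both $\sigma(m_{i,i}) = -m_{i,i}$ and $e(m_{i,i}) = 0$, is cut down in every type (separable or hyper-radicial) to a $(d-1)$-dimensional $\F$-subspace of $\D$, giving the grand total $d\binom{n}{2} + n(d-1) = \alpha(n,d)$.

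The step I expect to be the main obstacle is the ideal-theoretic observation opening the argument: it is the only point where the division-ring structure is genuinely used, and it is what allows the nondegeneracy properties of $B$ over $\D$ to be transferred to those of $b$ over $\F$, and in turn allows the reduction to the two-sided nondegenerate situation where Proposition~\ref{prop:skewHermitian} applies.
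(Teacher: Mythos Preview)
Your proposal is correct and follows essentially the same route as the paper: the paper combines Lemma~\ref{lemma:relationradicals} (right-nondegeneracy of $b$ from that of $B$), Proposition~\ref{prop:fromalternatortointransitive}(b) (deep intransitivity from a right-nondegenerate alternator), and Proposition~\ref{prop:conversequadratic}(a) (the reduction to $U/L$, the matrix identification via Proposition~\ref{prop:skewHermitian}, and the count $\dim_\F \calS\calH_n(\D)=\alpha(n,d)$), and you reproduce all of these steps, reproving Lemma~\ref{lemma:relationradicals} inline via your ideal argument rather than citing it. One tiny slip: the image of $B(x,-)$ is a \emph{right} ideal (from right-$\D$-linearity $B(x,ya)=B(x,y)a$), not a left ideal; this is harmless since in a division ring either kind of nonzero one-sided ideal equals $\D$.
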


Having these results, it will be fairly easy to derive the ones on optimal irreducible trivial spectrum subspaces,
by adapting the method of \cite{dSPAtkinsontoGerstenhaber}.
The main difficulty is that although a trivial spectrum subspace $S$ of $\End_\D(V)$ (with $V$ nonzero)
is obviously intransitive (since $S x$ can never contain $x$ when $x$ is a nonzero vector of $V$),
it is however not deeply intransitive in general.

A classical way of obtaining the deep intransitivity is through the following property, which is essentially the analogue of Atkinson and Lloyd's
condition (C) in the definition of a primitive space:

\begin{Def}
Let $U$ and $V$ be $\D$-vector spaces and $S$ be an $\F$-linear subspace of $\Hom_\D(U,V)$.
We say that $S$ is \textbf{weakly primitively intransitive} whenever
there is no $1$-dimensional $\D$-linear subspace $V'$ of $V$ such that,
for the standard projection $\pi : V \twoheadrightarrow V/V'$, the operator space $\pi S \subseteq \Hom_\D(U,V/V')$
satisfies $\trk(\pi S) \leq \tr(S)-d$.
\end{Def}

In fact, for most applications we will prefer the following stronger property, which is easier to work with:

\begin{Def}
Let $U$ and $V$ be $\D$-vector spaces and $S$ be an $\F$-linear subspace of $\Hom_\D(U,V)$.
We say that $S$ is \textbf{primitively intransitive} whenever it is intransitive and there is no nonzero linear subspace $V'$ of $V$ such that,
for the standard projection $\pi : V \twoheadrightarrow V/V'$, the operator space $\pi S \subseteq \Hom_\D(U,V/V')$ is intransitive.
\end{Def}

The following implication is easy to see:

\begin{prop}\label{prop:strongtoweakprimitive}
Every primitively intransitive operator space is weakly primitively intransitive.
\end{prop}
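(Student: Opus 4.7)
The plan is to prove the contrapositive: assume $\calS$ is not weakly primitively intransitive, and show that it cannot be primitively intransitive either. Unpacking the negation, there exists a $1$-dimensional $\D$-linear subspace $V' \subseteq V$ such that, for the canonical projection $\pi : V \twoheadrightarrow V/V'$, the operator space $\pi\calS$ satisfies $\trk(\pi\calS) \leq \trk(\calS)-d$. I want to turn this $V'$ into a witness of the failure of primitive intransitivity.

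The key numerical observation is that a $1$-dimensional $\D$-linear subspace has $\F$-dimension exactly $d$, so $\dim_\F(V/V') = (n-1)d$, where $n:=\dim_\D V$. Suppose for contradiction that $\calS$ is primitively intransitive; in particular it is intransitive, so $\calS x \subsetneq V$ for every $x \in U$. Hence $\trk(\calS) \leq \dim_\F V - 1 = nd-1$. Plugging into the assumed inequality gives
$$\trk(\pi\calS) \;\leq\; \trk(\calS) - d \;\leq\; (n-1)d - 1 \;<\; \dim_\F(V/V').$$
Therefore $\pi\calS \cdot x \subsetneq V/V'$ for every $x \in U$, which exactly says that $\pi\calS$ is intransitive as an $\F$-linear subspace of $\Hom_\D(U,V/V')$. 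Since $V'$ is a nonzero $\D$-linear subspace of $V$, this contradicts the primitive intransitivity of $\calS$. Hence $\calS$ must be weakly primitively intransitive.

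There is essentially no obstacle to overcome here: once the definitions are lined up, the proof reduces to matching the $\F$-dimension of a $1$-dimensional $\D$-subspace with the drop $-d$ appearing in the definition of weak primitive intransitivity, and then using the elementary upper bound $\trk(\calS) \leq nd-1$ that comes from plain intransitivity. The only mild point to keep in mind is the interplay between $\D$-linear dimensions (used to index the subspaces) and $\F$-linear dimensions (used to measure the transitive rank).
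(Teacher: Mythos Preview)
Your proof is correct and follows essentially the same route as the paper's: both argue by contrapositive, starting from a $1$-dimensional $\D$-subspace $V'$ witnessing the failure of weak primitive intransitivity, using the intransitivity of $\calS$ to obtain $\trk(\calS)\leq nd-1$, and then computing $\trk(\pi\calS)\leq (n-1)d-1<\dim_\F(V/V')$ to conclude that $\pi\calS$ is intransitive, contradicting primitive intransitivity. The only cosmetic difference is that the paper phrases the final step as a direct conditional (``if $\calS$ is intransitive, then $\pi\calS$ is intransitive'') rather than as a proof by contradiction.
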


\begin{proof}
Let $U$ and $V$ be $\D$-vector spaces, $S$ be an $\F$-linear subspace of $\Hom_\D(U,V)$,
and assume that there exists a $1$-dimensional $\D$-linear subspace $V'$ of $V$ such that,
for the standard projection $\pi : V \twoheadrightarrow V/V'$, the operator space $\pi S \subseteq \Hom_\D(U,V/V')$
satisfies $\trk(\pi S)\leq \tr(S)-d$.
If $S$ is intransitive, then we deduce that $\trk(\pi S)<\dim_\F(V/V')$ and hence $\pi S$ is intransitive as well:
this shows that $S$ is not primitively intransitive.
\end{proof}

A key insight of Atkinson and Lloyd can be rephrased in saying that, provided that the field $\F$ has enough elements,
every weakly primitively intransitive space is deeply intransitive
(in their terminology, this is the statement that every matrix space that satisfies condition (C) has the row property). 
The primitive intransitivity condition has the shortcoming that it is generally not inherited
by $S^{V'}$ when $V'$ is a linear subspace of $V$, in contrast with the deep intransitivity condition. However, it is the most convenient condition
when applying the previous theorem. Indeed, we shall prove -- under the usual cardinality conditions on $\F$ --
that every irreducible optimal trivial spectrum subspace of $\End_\D(V)$
is primitively intransitive, whereas we are not aware of any direct proof that it must be deeply intransitive.

\subsection{Structure of the article}

As hinted at in the previous paragraph, our focus, which had already shifted from unital affine subspaces to 
trivial spectrum linear subspaces, is now shifting from trivial spectrum subspaces to deeply intransitive operator spaces,
a more profound and general problem. We expect that studying the later will yield many applications in the future,
just like Atkinson's theorem has found many unexpected applications in the recent past (see e.g.,
\cite{dSPAtkinsontoGerstenhaber,dSPtriangularizable}).
In an early version of the present article, we wrote a proof of the result on trivial spectrum subspaces alone.
Surprising as it may seem, we found later that the result on deeply intransitive operator spaces was much easier to prove. The icing on the cake is that it
also holds far more potential for future applications.

Let us now discuss the structure of the remainder of the article.
We will start with a small general section that regroups the basic facts we need on division algebras, including the classification of multiplicative
quadratic forms on such algebras. Then we will immediately give a proof of the dimension theorem for trivial spectrum subspaces
(Theorem \ref{theo:dimMax}), along with two key technical lemmas. We could of course have waited for the proofs of the theorems on intransitive operator spaces,
but the proof is so short that the reader will certainly enjoy reading it immediately.

The next section (Section \ref{section:genericmatrices}) introduces the key technical tools to study the intransitive operator spaces:
there we make no claims of originality, as we will use the generic matrix machinery that allowed Atkinson to prove his theorem,
a machinery which has already demonstrated its remarkable efficiency for such problems.
The main results on intransitive operator spaces are obtained in Section \ref{section:bigproof}: there,
the technical details for a general division ring $\D$ are much more difficult than in Atkinson's article.

In Section \ref{section:trivialspectrum} we derive all the main results on optimal trivial spectrum subspaces from the ones on intransitive operator spaces.

Finally, in Section \ref{section:affinenonsingular} we conclude the study by solving the initial problem of classifying the optimal unital $\F$-affine subspaces
of a simple $\F$-algebra, up to equivalence.

\subsection{Potential applications}

There are many applications of the results featured in this article, but due to a lack of space we have decided to keep them out of this article.
However, we can quickly discuss two of them.

The first application is towards affine subspaces of operators with rank bounded below.
In this, one takes finite-dimensional $\D$-vector spaces $U$ and $V$, together with a positive integer $r \leq \min(\dim_\D U,\dim_\D V)$,
and one asks about the greatest possible dimension for an $\F$-affine subspace of elements of $\Hom_\D(U,V)$
in which every operator has rank at least $r$, and how to classify the spaces of greatest possible dimension. 
The special case $r=\dim_\D U=\dim_\D V$ is essentially the problem of unital affine subspaces in the simple $\F$-algebra $\End_\D(U)$. 
Deriving the greatest possible dimension in the general case is not difficult, but classifying the optimal spaces is highly non-trivial:
see \cite{dSPlargeaffinerankbelow} for the special case where $\D=\F$.

A second potential application is towards $\F$-linear subspaces of $\F$-diagonalisable elements of a simple $\F$-algebra $\calA$.
There, an element $a \in \calA$ is called $\F$-diagonalisable whenever its minimal polynomial over $\F$ splits with simple roots, 
or equivalently when $x \mapsto ax$ is a diagonalisable endomorphism of the $\F$-vector space $\calA$.
When $\calA=\Mat_n(\F)$ this problem has already been studied, in particular over the field of real numbers:
for an arbitrary field $\F$, any $\F$-linear subspace of $\F$-diagonalisable elements of $\Mat_n(\F)$
has dimension at most $\frac{n(n+1)}{2}$, with equality attained only for conjugates of the space $\Mats_n(\F)$ of all symmetric matrices
(and if $n \geq 2$ such spaces exist only if $\F$ is formally real and Pythagorean).
A connection with the problem of trivial spectrum subspace of $\Mat_n(\F)$ was discovered and exploited in \cite{dSPSEVdiag2}:
In a sequel to this article we will use this connection to generalize the study of linear subspaces of $\F$-diagonalisable elements in 
any simple $\F$-algebra, and in particular we will obtain the structure of the 
$\R$-linear subspaces of $\R$-diagonalisable matrices of $\Mat_n(\C)$: it will turn out that the optimal spaces are the conjugates of the space
$\Math_n(\C)$ of all Hermitian matrices. There is also a quaternionic analogue of this result.

\section{On $\D$-vector spaces seen as $\F$-vector spaces}\label{section:basicD}

From now on, $\D$ denotes a division algebra that contains $\F$ as a central subfield, and we denote by $d$ the dimension of $\D$ over $\F$.

\subsection{The socle of an $\F$-linear subspace}\label{section:socle}

\begin{Def}\label{def:socle}
Let $U$ be a $\D$-vector space, and $U_0$ be an $\F$-linear subspace of it.
Then there is a greatest $\D$-linear subspace of $U$ that is included in $U_0$: we call it the \textbf{socle} of $U_0$.
\end{Def}

It suffices to take the sum of all $\D$-linear subspaces of $U$ that are included in $U$: it is obviously included in $U$, and it is a $\D$-linear subspace.

\subsection{$\D$-linear forms vs $\F$-linear forms}\label{section:duality}

Now, we fix a nonzero linear form $e$ on the $\F$-vector space $\D$.
Consider an arbitrary finite-dimensional $\D$-vector space $V$.
We have a homomorphism of $\F$-vector spaces
$$\Phi_e : \begin{cases}
\Hom_\D(V,\D) & \longrightarrow \Hom_\F(V,\F) \\
f & \longmapsto e\circ f,
\end{cases}$$
and we will see that it is actually an isomorphism.
First, let us see that $\Phi_e$ is injective: for all $f \in \Hom_\D(V,\D) \setminus \{0\}$, the mapping
$f$ is surjective and hence cannot map into $\Ker e$, therefore $e \circ f \neq 0$.
We conclude that $\Phi_e$ is an isomorphism because both its source and target spaces have
dimension  $d \dim_\D V$ over $\F$.

Here is a first consequence which will be frequently used:

\begin{lemma}\label{lemma:hyperplane}
Let $V$ be a finite-dimensional $\D$-vector space, and $H$ be an $\F$-linear hyperplane of it.
Then the socle of $H$ is a $\D$-linear hyperplane of $V$.
\end{lemma}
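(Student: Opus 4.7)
The plan is to exhibit the socle of $H$ as the kernel of a suitable $\D$-linear form, built from the isomorphism $\Phi_e$ discussed just above the statement. First I would pick a nonzero $\F$-linear form $\varphi : V \to \F$ whose kernel is $H$; this exists because $H$ is an $\F$-linear hyperplane. Via the inverse of $\Phi_e$, I obtain a unique nonzero $f \in \Hom_\D(V,\D)$ with $\varphi = e \circ f$. Since a nonzero $\D$-linear form into $\D$ is automatically surjective (its image is a nonzero $\D$-subspace of the $1$-dimensional $\D$-space $\D$), $\Ker f$ is a $\D$-linear hyperplane of $V$.

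Next I would verify that $\Ker f$ is contained in $H$: this is immediate from $\varphi = e \circ f$. So $\Ker f$ is a candidate for the socle, and it remains to show that it is the \emph{largest} $\D$-linear subspace of $V$ sitting inside $H$. Given any $\D$-linear subspace $W \subseteq H$, the restriction $f_{|W}$ is $\D$-linear and its image $f(W)$ is a $\D$-linear subspace of $\D$, hence equals either $\{0\}$ or $\D$. The inclusion $W \subseteq H = \Ker \varphi$ forces $f(W) \subseteq \Ker e$. Because $e$ is nonzero, $\Ker e$ is a proper $\F$-subspace of $\D$ and in particular cannot contain $\D$ itself; therefore $f(W) = \{0\}$, i.e., $W \subseteq \Ker f$.

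Combining these two observations, the socle of $H$ equals $\Ker f$, which is a $\D$-linear hyperplane of $V$, as desired. There is no serious obstacle here: the only subtle point is the use of the isomorphism $\Phi_e$ to pass from the $\F$-linear form defining $H$ to a $\D$-linear form on $V$, and the key structural fact that a $\D$-linear subspace of $\D$ is either zero or everything, which is what rules out a strictly larger $\D$-linear subspace inside $H$.
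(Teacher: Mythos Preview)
Your proof is correct and follows essentially the same route as the paper: choose an $\F$-linear form $\varphi$ with kernel $H$, invert $\Phi_e$ to obtain a nonzero $\D$-linear form $f$ with $\varphi = e\circ f$, and identify the socle with $\Ker f$. The only minor difference is in the final step: the paper concludes by a dimension count (the socle is a proper $\D$-subspace, hence of $\D$-dimension at most $\dim_\D V - 1$, and therefore equal to $\Ker f$), whereas you verify maximality directly by arguing that any $\D$-subspace $W\subseteq H$ has $f(W)$ a $\D$-submodule of $\D$ contained in $\Ker e$, forcing $f(W)=\{0\}$; both arguments are equally short and valid.
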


\begin{proof}
Choose a nonzero $\F$-linear form $\varphi$ with kernel $H$. Then $\varphi=e \circ f$ for a unique nonzero $\D$-linear form $f$ on $V$.
Putting $G:=\Ker f$, we see that $G$ is a $\D$-linear hyperplane and $G \subseteq H$.
The socle $S$ of $H$ is a strict subspace of $V$ (since $H \neq V$), so $\dim_\D S \leq \dim_\D V-1$, and since $G \subseteq S$ we deduce that $G=S$.
\end{proof}

A second consequence lies in the representation of $\F$-bilinear forms, to be used in the study of alternators.
Let $b : U \times V \rightarrow \F$ be an $\F$-bilinear form, where $U$ and $V$ are finite-dimensional $\D$-vector spaces. We can then consider the associated $\F$-linear mapping
$$x \in U \mapsto b(x,-) \in \Hom_\F(V,\F),$$
and then compose it with the inverse of $\Phi_e$ to obtain an $\F$-linear mapping
$$\widetilde{b} : U \longrightarrow \Hom_\D(V,\D),$$
yielding an $\F$-bilinear and right-$\D$-linear mapping
$$b^{\D,e} : (x,y)\in U \times V \mapsto \widetilde{b}(x)[y],$$
so that
\label{page:inducedform}
$$\forall (x,y)\in U \times V, \; b(x,y)=e(b^{\D,e}(x,y)).$$
Moreover $b^{\D,e}$ is uniquely determined by $b$, and it depends $\F$-linearly on $b$.

Here is a simple result in the special case where $b^{\D,e}$ is sesquilinear for some involution $\sigma$ of $\D$ over $\F$.

\begin{lemma}\label{lemma:relationradicals}
Assume that $\D$ has quadratic type over $\F$, and consider an associated pair $(\sigma,e)$. Let $U$ and $V$ be finite-dimensional $\D$-vector space,
$B : U \times V \rightarrow \D$ be a $\sigma$-sesquilinear form, and set $b:=e \circ B$. Then
the left radical (respectively, the right radical) of $b$ coincides with the one of $B$, and in particular it is a $\D$-linear subspace of $U$
(respectively, of $V$).
\end{lemma}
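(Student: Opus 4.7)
The plan is straightforward: the inclusions $\Lrad(B) \subseteq \Lrad(b)$ and $\Rrad(B) \subseteq \Rrad(b)$ are trivial from $b=e \circ B$, so the real content lies in the reverse inclusions. These will rest on exactly the observation already exploited in Section \ref{section:duality}: every nonzero one-sided ideal of the division ring $\D$ equals $\D$, so none can be contained in the kernel of the nonzero $\F$-linear form $e$.

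For the left radical, I would fix $x \in \Lrad(b)$ and examine $B(x,-) : V \to \D$. The right-$\D$-linearity of $B$ in the second slot makes this an element of $\Hom_\D(V,\D)$, so its image is a right ideal of $\D$. Since $e \circ B(x,-)=b(x,-)=0$ and $e$ does not vanish on $\D$, this right ideal cannot be $\D$, hence must be $\{0\}$; thus $B(x,-)=0$ and $x \in \Lrad(B)$. Equivalently, one invokes the isomorphism $\Phi_e$ of Section \ref{section:duality} directly.

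For the right radical, I would fix $y \in \Rrad(b)$ and examine $B(-,y): U \to \D$. This map is left-$\sigma$-semilinear rather than right-$\D$-linear, so $\Phi_e$ does not apply directly; this is where I expect the only real (and quite minor) obstacle. The workaround is natural: the identity $B(x\alpha,y)=\sigma(\alpha)B(x,y)$ shows that $\im B(-,y)$ is closed under left multiplication by every element of the form $\sigma(\alpha)$, and since $\sigma$ is a bijection of $\D$ this image is in fact a left ideal of $\D$. The same one-sided ideal dichotomy then forces $B(-,y)=0$, giving $y \in \Rrad(B)$. Finally, the $\D$-linearity of $\Lrad(B)$ and $\Rrad(B)$ as subspaces of $U$ and $V$ respectively is immediate from the sesquilinearity of $B$ (the $\sigma$-semilinearity in the first slot handles the left radical, and the right-linearity in the second slot handles the right radical), so the radical equalities automatically deliver the promised $\D$-linear subspace structure.
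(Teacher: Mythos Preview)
Your proof is correct and is essentially the same as the paper's: both arguments reduce to the fact that a nonzero one-sided ideal of $\D$ equals $\D$ and hence cannot sit inside $\Ker e$. The paper presents this element-wise (for $y\in\Rrad(b)$ it computes $e(aB(x,y))=e(B(x\sigma(a),y))=b(x\sigma(a),y)=0$ for all $a$, using that $\sigma$ is an involution, and concludes $B(x,y)=0$), whereas you package the same computation as ``the image of $B(-,y)$ is a left ideal contained in $\Ker e$''; your alternative via the injectivity of $\Phi_e$ for the left radical is likewise just a restatement of the paper's reasoning.
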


\begin{proof}
Let $y \in \Rrad(b)$. Then
$$\forall x \in U,\; \forall a \in \D, \; e(aB(x,y))=e(B(x \sigma(a),y))=b(x\sigma(a),y)=0$$
and hence $B(-,y)=0$. Hence $\Rrad(b)$ is included in the right radical of $B$; the converse inclusion is obvious. Likewise one proves that
the left radical of $B$ is the left radical of $b$.
\end{proof}

\subsection{Multiplicative quadratic forms on division algebras}

Our study will require the following classical result from the theory of composition algebras.

\begin{theo}\label{theo:compositionalgebras}
Let $q$ be a nonisotropic quadratic form on the $\F$-vector space $\D$ such that $\forall (x,y)\in \D^2, \; q(xy)=q(x)q(y)$.
Then $\D$ is of quadratic type over $\F$, and $q : x \mapsto x\sigma(x)$ for the standard involution $\sigma$ of $\D$ over $\F$.
\end{theo}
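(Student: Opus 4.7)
The plan is to reduce everything to the behavior of a single bilinear object: the polar form of $q$. Multiplicativity with $q(1) \neq 0$ forces $q(1) = 1$. Set $B(x, y) := q(x + y) - q(x) - q(y)$ (a symmetric $\F$-bilinear form on $\D$) and $T(x) := B(x, 1)$. Polarizing the identity $q(xy) = q(x) q(y)$ in each argument independently produces the master identity
\[
B(xy, wz) + B(wy, xz) = B(x, w)\, B(y, z) \qquad (x, y, z, w \in \D).
\]
Straightforward specializations then give $B(xy, xz) = q(x)\, B(y, z)$, $B(x, xz) = q(x)\, T(z)$ (by $y = 1$ in the previous), $T(xy) = T(x) T(y) - B(x, y)$ (by $w = z = 1$), and $T(x^2) = T(x)^2 - 2 q(x)$ (from $B(x, x) = 2 q(x)$).

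The crux of the proof is to extract from these identities the quadratic relation
\[
T\bigl((x^2 - T(x)\, x + q(x)\cdot 1_\D)\, z\bigr) = 0 \qquad (x, z \in \D).
\]
One takes $w = 1, y = x$ in the master identity to get $B(x^2, z) + B(x, xz) = T(x) B(x, z)$, substitutes $B(x, xz) = q(x) T(z)$, then expands both sides using $B(a, b) = T(a) T(b) - T(ab)$ and cancels via $T(x^2) = T(x)^2 - 2q(x)$. I expect this combinatorial step to be the main technical obstacle; once it is in hand, the rest reduces to classical structural considerations.

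The remaining argument splits on whether $T \equiv 0$. If $T \not\equiv 0$, then since $\D$ is a division ring, right multiplication by any nonzero element is bijective, and the displayed identity forces $x^2 - T(x)\, x + q(x) \cdot 1_\D = 0$ for every $x \in \D$. Hence every element of $\D$ has $\F$-degree at most $2$. A classical analysis (first show $[Z(\D):\F] \leq 2$; if $Z(\D) = \F$, central simple algebra theory forces $\D$ to be $\F$ or a quaternion algebra; if $[Z(\D):\F] = 2$, writing a generator $\theta$ of $Z(\D)/\F$ and an $i \in \D$ of degree $2$, the element $i + \theta$ is shown to have $\F$-degree $> 2$ unless $\D = Z(\D)$) shows that $\D$ is either $\F$, a separable quadratic extension of $\F$, or a quaternion algebra over $\F$. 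In each of these, $\sigma(x) := T(x) - x$ is an involutive $\F$-linear antiautomorphism (the antihomomorphism identity being the polarization of the quadratic relation), $T$ matches the trace $\tr_{\D/\F}$ of Definition \ref{def:standardinvolution}, and $x \sigma(x) = T(x) x - x^2 = q(x)$. If instead $T \equiv 0$, then $B(x, y) = T(x) T(y) - T(xy) = 0$, so $B \equiv 0$, whence $2 q(x) = B(x, x) = 0$; nonisotropy forces $\car(\F) = 2$. Then $q$ is additive, and in characteristic $2$,
\[
q(x^2 + q(x) \cdot 1_\D) = q(x)^2 + q(x)^2 = 0,
\]
so nonisotropy yields $x^2 = q(x) \cdot 1_\D \in \F$ for every $x \in \D$. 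Expanding $(x + y)^2 = x^2 + y^2$ forces $xy + yx = 0$, hence $xy = yx$ in characteristic $2$, so $\D$ is a field; being purely inseparable of exponent $1$, it has degree a power of $2$. Setting $\sigma := \id$ places us in case (iv) with $q(x) = x^2 = x \sigma(x)$, completing the argument.
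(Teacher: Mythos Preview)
Your derivation of the quadratic identity $x^2 - T(x)x + q(x) = 0$ is correct and genuinely different from the paper's. The paper expands $q((x+y)(x+z))$ in two ways and compares homogeneous components; you instead polarize fully to obtain the master identity $B(xy,wz)+B(wy,xz)=B(x,w)B(y,z)$, extract $T\bigl((x^2-T(x)x+q(x))z\bigr)=0$, and then exploit that $\D$ is a division ring (so $z\mapsto az$ is onto for $a\neq 0$) to kill the bracket when $T\not\equiv 0$. This is clean, and your case split on $T\equiv 0$ versus $T\not\equiv 0$ is equivalent to the paper's split on degeneracy of $q$ (since $T\equiv 0\Rightarrow B\equiv 0$ via $B(x,y)=T(x)T(y)-T(xy)$, and conversely $1$ lies in the radical whenever $T\equiv 0$). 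Your treatment of the $T\equiv 0$ case is also fine and essentially matches the paper's.

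The weak point is your ``classical analysis'' paragraph. The paper gives a self-contained argument: once the quadratic identity is in hand and $d>2$, it picks a $(d-2)$-dimensional subspace of $\Ker T$ on which $q$ is nondegenerate, builds a homomorphism from the Clifford algebra of $-q$ restricted there into $\D$, and uses simplicity plus dimension count to force $d=4$ and identify the quaternion structure. Your sketch instead tries to control $Z(\D)$ and argue case by case, but the step ``$i+\theta$ has $\F$-degree $>2$ unless $\D=Z(\D)$'' is not justified and is delicate in characteristic~$2$: writing $c=T(i+\theta)$ one needs $2\theta\in\F$, which is vacuous when $\car(\F)=2$, so the contradiction does not materialize without further work. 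Likewise ``$[Z(\D):\F]\leq 2$'' is asserted but not proved; a commutative field in which every element is quadratic over $\F$ can have large degree (e.g.\ hyper-radicial extensions), and you need to use $T\not\equiv 0$ nontrivially to rule this out. If you want to keep your route, either cite the classification of associative composition algebras explicitly, or replace this paragraph with the paper's Clifford-algebra bound, which works uniformly in all characteristics.
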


The result is rarely presented in this way in the literature, in particular because most references on the topic do not restrict the scope to nonisotropic quadratic forms.
Hence, some comments are necessary. If $q$ is non-degenerate then the theory of unital composition algebras yields that either $\D$ is a Cayley algebra over $\F$ or 
$\D$ is of separable quadratic type over $\F$ and $q : x \mapsto x\sigma(x)$ for the standard involution $\sigma$ of $\D$ over $\F$
(see theorem 33.17 in \cite{Involutions}). Yet, here the multiplication map is associative, so Cayley algebras are discarded.
Finally, if $q$ is degenerate then $\car(\F)=2$ and it is known in that case that the radical of $q$ equals the full space $\D$
(see proposition 33.4 in \cite{Involutions}), and from there it is known that $\D$ is a hyper-radicial extension of $\F$ and that $q : x \mapsto x^2$
(see corollary 33.6 in \cite{Involutions}).

\section{The greatest possible dimension for a trivial spectrum subspace}\label{section:majodim}

This short section is devoted to the proof of Theorem \ref{theo:dimMax}. We have seen right at the beginning of 
Section \ref{section:irreducibility} that for every integer $n \geq 1$
there exists a trivial spectrum subspace of $\Mat_n(\D)$ with dimension $\alpha(n,d)=n(d-1)+d\dbinom{n}{2}$.
Hence, in order to prove Theorem \ref{theo:dimMax} it suffices to prove the following result:

\begin{theo}\label{theo:majodimsimple}
Let $\D$ be a division ring with finite dimension $d$ over a central subfield $\F$, and $V$ be a vector space over $\D$ with finite dimension $n$. Assume that $|\F| \geq dn$.
Let $S$ be a trivial spectrum subspace of $\End_\D(V)$.
Then
$$\dim S \leq \alpha(n,d)=n(d-1)+d\dbinom{n}{2}.$$
\end{theo}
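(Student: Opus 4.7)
The plan is to induct on $n=\dim_\D V$. The base case $n=1$ is immediate: $\End_\D(V)$ is an $\F$-vector space of dimension $d$, and the $\F$-trivial spectrum property forces $\id_V \notin \calS$, so $\calS$ sits inside an $\F$-linear hyperplane of $\End_\D(V)$ and $\dim_\F \calS \leq d-1 = \alpha(1,d)$.

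For the inductive step with $n \geq 2$, I will split the argument according to whether $\calS$ preserves a nontrivial $\D$-linear subspace of $V$. When a nonzero $\D$-invariant subspace $W \subsetneq V$ exists, with $k:=\dim_\D W$, I pick a $\D$-linear complement $V = W \oplus W'$; every $u \in \calS$ then takes block-triangular form, and the diagonal projections $u \mapsto A_u \in \End_\D(W)$ and $u \mapsto C_u \in \End_\D(W')$ produce trivial spectrum subspaces of these smaller endomorphism algebras (a fixed vector for $A_u$ in $W$, or for $C_u$ in $V/W$, would yield a fixed vector for $u$). The inductive hypothesis bounds the two image dimensions by $\alpha(k,d)$ and $\alpha(n-k,d)$ respectively, while the joint kernel is contained in $\Hom_\D(W',W)$, of $\F$-dimension $k(n-k)d$. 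The additivity identity $\alpha(n,d) = \alpha(k,d) + \alpha(n-k,d) + k(n-k)d$ (stated right after the definition of $\alpha$) then closes this case.

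The main obstacle will be the irreducible case, in which no such $W$ exists and the cardinality hypothesis $|\F| \geq nd$ becomes essential. My approach is to pick a nonzero $x_0 \in V$, consider the $\F$-linear evaluation $\calS \to V$, $u \mapsto u(x_0)$, with image $\calS x_0$ and kernel $K = \{u \in \calS : u(x_0) = 0\}$, and observe that $K$ automatically annihilates the whole $\D$-line $x_0\D$ by $\D$-linearity. Choosing a $\D$-linear complement $V = x_0\D \oplus V'$, the restriction of $u \in K$ to $V'$ decomposes as a pair $(B_u, C_u) \in \Hom_\D(V',x_0\D) \oplus \End_\D(V')$, and the trivial spectrum condition on $u$ reduces exactly to the same condition for $C_u$. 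Induction on the southeast-block image therefore bounds it by $\alpha(n-1, d)$, while the northeast-block kernel contributes at most $\dim_\F \Hom_\D(V',x_0\D) = (n-1)d$. This yields $\dim K \leq \alpha(n-1,d)+(n-1)d$, leaving only a budget of $d-1$ in $\alpha(n,d) = \alpha(n-1,d)+(n-1)d+(d-1)$ for $\dim_\F \calS x_0$.

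The hard part is that $\dim_\F \calS x_0$ can a priori be as large as $nd-1$, far exceeding $d-1$. This is where the cardinality assumption $|\F| \geq nd$ must enter, via a genericity or polynomial-avoidance argument. I expect to either exhibit an $x_0$ for which $\calS x_0 \subseteq x_0\D$ -- which would make $x_0\D$ a one-dimensional $\calS$-invariant subspace, contradicting the assumed irreducibility and reducing back to the first case -- or, assuming for contradiction that $\dim\calS > \alpha(n,d)$, to produce an operator in $\calS$ with a nonzero $\F$-eigenvalue by locating a root of a suitable $\F$-polynomial of degree at most $nd$ over a field of size at least $nd$. Formulating this genericity step rigorously is the crux of the argument, and I expect it to account for the two key technical lemmas that the introduction indicates will be proved in this short section.
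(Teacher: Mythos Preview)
Your reducible case is fine, but the irreducible case has a genuine gap, and your proposed fixes do not work. The issue is your bound $\dim K \leq \alpha(n-1,d)+(n-1)d$: this is too weak by exactly $(n-1)d$, and neither of your suggested repairs closes it. Exhibiting $x_0$ with $\calS x_0 \subseteq x_0\D$ would simply contradict irreducibility; it does not produce a dimension bound in the irreducible case. The polynomial-root idea is too vague to assess, and in fact no such argument is needed.

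The missing idea is the \emph{Tangent Space Lemma} (a consequence of the Flanders--Atkinson lemma): if $x$ is chosen so that $\widehat{x}:u\mapsto u(x)$ has \emph{maximal} rank $r$ in the dual operator space $\widehat{\calS}\subseteq \Hom_\F(\calS,V)$, and $|\F|>r$, then every element of $\widehat{\calS}$ maps $\Ker\widehat{x}$ into $\im\widehat{x}$. Unwinding this says precisely that every $u\in K=\{u\in\calS:u(x)=0\}$ has $\im u\subseteq \calS x$. Since each such $u$ is $\D$-linear, its range lies in the socle of $\calS x$, which (as $x\notin\calS x$) fits inside a $\D$-hyperplane $H$ complementary to $x\D$. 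Thus in the block decomposition $V=x\D\oplus H$, every $u\in K$ has the form $\begin{pmatrix}0&0\\0&C_u\end{pmatrix}$: your northeast block $B_u$ is forced to vanish, not merely bounded by $(n-1)d$. Now $u\mapsto C_u$ embeds $K$ into a trivial spectrum subspace of $\End_\D(H)$, induction gives $\dim K\leq\alpha(n-1,d)$, and $\dim\calS = r+\dim K \leq (nd-1)+\alpha(n-1,d)=\alpha(n,d)$.

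Note that this argument makes no reference to reducibility: your case split is unnecessary. The cardinality hypothesis $|\F|\geq nd$ enters only to guarantee $|\F|>r$ (since $r\leq nd-1$), which is what the Tangent Space Lemma requires.
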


The main key to this result is the use of the dual operator space of $S$ (see Section \ref{section:strategy}) and what we call
the \emph{Tangent Space Lemma.}
The latter is recalled in the next section.

\subsection{The Tangent Space Lemma, and the Flanders-Atkinson lemma}

One of the main keys to Theorem \ref{theo:majodimsimple} is the following lemma (see \cite{AtkinsonPrim} and \cite{dSPLLD1} for various proofs).
The special case $k=0$ in the second set of identities is due to Flanders \cite{Flanders}, and the general case is due to Atkinson.

\begin{lemma}[Flanders-Atkinson lemma]\label{lemma:FlandersAtkinson}
Let $n,p,r$ be integers with $0<r \leq \min(n,p)$. Assume that $|\F|>r$.
Let $J_r:=\begin{bmatrix}
I_r & 0 \\
0 & 0
\end{bmatrix}$ and $M=\begin{bmatrix}
A & C \\
B & D
\end{bmatrix}$ belong to $\Mat_{n,p}(\F)$, with $A \in \Mat_r(\F)$ and so on.
Assume that $\rk(sJ_r+t M) \leq r$ for all $(s,t) \in \F^2$. Then
$$D=0 \quad \text{and} \quad \forall k \geq 0, \; B A^kC=0.$$
\end{lemma}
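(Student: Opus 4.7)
The plan is to extract, for each pair of indices $(i,j)$ with $1 \le i \le n-r$ and $1 \le j \le p-r$, the $(r+1)\times(r+1)$ minor of $sJ_r+tM$ formed by the first $r$ rows and columns together with the extra row $r+i$ and column $r+j$. Writing $X(s,t) := sI_r+tA$, this minor equals
$$F_{i,j}(s,t) = \det\begin{bmatrix} X(s,t) & tC_{\cdot j} \\ tB_{i\cdot} & tD_{i,j} \end{bmatrix} = tD_{i,j}\det X(s,t) - t^2 B_{i\cdot}\operatorname{adj} X(s,t)\,C_{\cdot j},$$
by the bordered-determinant identity (valid for any $X$, since both sides are polynomials in the entries of $X$ that agree whenever $X$ is invertible). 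The rank hypothesis $\rk(sJ_r+tM)\le r$ forces $F_{i,j}(s,t)=0$ for every $(s,t)\in\F^2$. The key observation is that $F_{i,j}$ is \emph{homogeneous} of total degree $r+1$ in $(s,t)$, so it is well defined on the projective line $\mathbb{P}^1(\F)$. A nonzero bivariate homogeneous polynomial of degree $r+1$ has at most $r+1$ projective roots, while $F_{i,j}$ vanishes at all $|\F|+1$ points of $\mathbb{P}^1(\F)$. The assumption $|\F|+1>r+1$ forces $F_{i,j}\equiv 0$ in $\F[s,t]$.

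The next step is to exploit this polynomial identity by specialization. Setting $s=1$ and cancelling one factor of $t$ yields
$$D_{i,j}\det(I_r+tA) = t\,B_{i\cdot}\operatorname{adj}(I_r+tA)\,C_{\cdot j} \quad \text{in } \F[t].$$
Evaluating at $t=0$, the right-hand side vanishes while $\det(I_r+0\cdot A)=1$, so $D_{i,j}=0$. Substituting this back and cancelling another factor of $t$, we obtain $B_{i\cdot}\operatorname{adj}(I_r+tA)\,C_{\cdot j}=0$ in $\F[t]$. To extract the relations $BA^kC=0$, expand the adjugate: from the recursion $(I_r+tA)\operatorname{adj}(I_r+tA)=\det(I_r+tA)I_r$, one checks inductively that $\operatorname{adj}(I_r+tA)=\sum_{k=0}^{r-1}t^kM_k$, where each $M_k$ is a polynomial of degree $k$ in $A$ with leading term $(-A)^k$ and lower-order terms being scalar multiples of smaller powers of $A$. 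Equating the coefficient of $t^k$ in our identity to zero yields a triangular linear system; starting from $M_0=I_r$ (which gives $(BC)_{i,j}=0$) and using the triangular structure, a direct induction on $k$ produces $(BA^kC)_{i,j}=0$ for $k=0,1,\ldots,r-1$. The Cayley--Hamilton theorem then extends the conclusion to all $k\ge 0$, since every $A^k$ becomes an $\F$-linear combination of $I_r,A,\ldots,A^{r-1}$. Letting $(i,j)$ vary completes the proof that $D=0$ and $BA^kC=0$ for every $k\ge 0$.

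The crux of the argument, and the main point deserving care, is the tightness of the cardinality assumption $|\F|>r$. A naive single-parameter approach that works only with the pencil $J_r+tM$ naturally produces a polynomial identity of degree $r+1$ in $t$; concluding that it vanishes identically from its vanishing on all of $\F$ would require $|\F|\ge r+2$. The homogeneity of the two-variable pencil $sJ_r+tM$ gains exactly the one extra root needed: $|\mathbb{P}^1(\F)|=|\F|+1$, so the strict inequality $|\F|>r$ is just enough to overdetermine $F_{i,j}$. Everything else reduces to the standard adjugate recursion and Cayley--Hamilton, both of which are insensitive to the size of $\F$.
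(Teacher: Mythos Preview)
Your proof is correct. The paper does not actually prove this lemma; it merely cites \cite{AtkinsonPrim} and \cite{dSPLLD1} for proofs. Your argument is essentially the standard one found in those sources: one isolates a well-chosen $(r+1)\times(r+1)$ minor, uses homogeneity to pass from vanishing on $\F^2$ to vanishing as a polynomial under the hypothesis $|\F|>r$, and then reads off $D=0$ and the relations $BA^kC=0$ via the bordered-determinant/adjugate expansion and Cayley--Hamilton. Your remark on why the two-parameter pencil is needed to get away with the sharp bound $|\F|>r$ (rather than $|\F|\geq r+2$) is exactly the point.

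One very small cosmetic comment: in the step where you say ``cancelling one factor of $t$'', it may be worth noting explicitly that this is cancellation in the polynomial ring $\F[s,t]$ (after specializing $s=1$, in $\F[t]$), which is legitimate because $t$ is a nonzerodivisor there; this avoids any ambiguity with evaluation at $t=0$. Similarly, the polynomial identity $\det\begin{bmatrix} X & c \\ b & d\end{bmatrix}=d\det X - b\,\operatorname{adj}(X)\,c$ is a formal identity with integer coefficients, so it holds over any field regardless of cardinality; you might state this to preempt any worry about the Zariski-density argument over small fields.
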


The following corollary is simply obtained by exploiting the identity $D=0$ in the Flanders-Atkinson lemma:

\begin{lemma}[Tangent Space Lemma]\label{lemma:tangentspace}
Let $U$ and $V$ be finite-dimensional vector spaces over $\F$, and $S$ be a linear subspace of $\Hom_\F(U,V)$.
In $S$, take an element $u_0$ of maximal rank $r$, and assume that $|\F|>r$.
Then every element of $S$ maps $\Ker u_0$ into $\im u_0$.
\end{lemma}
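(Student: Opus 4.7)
The plan is to apply the Flanders--Atkinson lemma directly after choosing bases adapted to $u_0$. Let me outline the steps.

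First, I would choose a basis of $U$ of the form $(e_1,\dots,e_r,e_{r+1},\dots,e_{\dim U})$, where $(e_{r+1},\dots,e_{\dim U})$ is a basis of $\Ker u_0$ and $(e_1,\dots,e_r)$ projects to a basis of $U/\Ker u_0$. Correspondingly, I would choose a basis of $V$ of the form $(f_1,\dots,f_r,f_{r+1},\dots,f_{\dim V})$ in which $(f_1,\dots,f_r)=(u_0(e_1),\dots,u_0(e_r))$ is a basis of $\im u_0$ (this works since $u_0$ induces an isomorphism from $\Vect(e_1,\dots,e_r)$ onto $\im u_0$). In these bases, the matrix of $u_0$ is exactly $J_r=\begin{bmatrix} I_r & 0 \\ 0 & 0 \end{bmatrix}$.

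Next, pick an arbitrary $u \in \calS$ and let $M=\begin{bmatrix} A & C \\ B & D \end{bmatrix}$ be its matrix in the same bases, with $A$ of size $r \times r$ and $D$ of size $(\dim V-r)\times(\dim U-r)$. I want to verify that $\rk(sJ_r+tM)\leq r$ for every $(s,t)\in\F^2$. For $s\neq 0$, I observe that $sJ_r+tM$ represents $su_0+tu\in\calS$, hence has rank at most $r$ by the maximality of $\rk u_0$. For $s=0$, the matrix $tM$ either is zero or has the same rank as $M$, which is $\rk(u)\leq r$, again by maximality. Since $|\F|>r$, the hypotheses of the Flanders--Atkinson lemma are satisfied, so I can conclude that $D=0$.

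Finally, the identity $D=0$ translates directly to the desired geometric statement: for every $j>r$, the vector $u(e_j)$ has no component along $f_{r+1},\dots,f_{\dim V}$, meaning $u(e_j)\in\Vect(f_1,\dots,f_r)=\im u_0$. Since $(e_{r+1},\dots,e_{\dim U})$ spans $\Ker u_0$ and $u$ is linear, this shows $u(\Ker u_0)\subseteq \im u_0$, as required.

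There is essentially no substantial obstacle here: the only point that requires a moment's care is verifying the rank condition for both $s\neq 0$ and $s=0$ (one uses the line through $u_0$ and $u$, the other uses $u$ itself), and then correctly reading off the conclusion $D=0$ in terms of the chosen bases. Once those are in place, the lemma is immediate from Flanders--Atkinson.
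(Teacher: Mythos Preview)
Your proof is correct and follows exactly the approach the paper indicates: the lemma is obtained from the Flanders--Atkinson lemma by choosing bases adapted to $u_0$ so that its matrix is $J_r$, and then reading off the conclusion $D=0$. The case distinction on $s$ is harmless but unnecessary, since $su_0+tu\in\calS$ for all $(s,t)\in\F^2$ already gives $\rk(sJ_r+tM)\leq r$ directly.
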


The reference to tangent spaces comes from the observation that if $\F$ is the field of real numbers then
the manifold of all rank $r$ operators of $\Hom_\F(U,V)$ has its tangent space at $u_0$
equal to the space of all operators in $\Hom_\F(U,V)$ that map $\Ker u_0$ into $\im u_0$.

\subsection{Proof of Theorem \ref{theo:majodimsimple}}

We prove the result by induction on $n$. The case $n=0$ is trivial, so we assume that $n \geq 1$.
Let us consider the dual operator space
$\widehat{S}$, consisting of the $\F$-linear mappings
\label{page:hatx}
$$\widehat{x} : u \in S \mapsto u(x) \in V, \quad \text{with $x \in V$.}$$
Note that $\widehat{S}$ is an $\F$-linear subspace of $\Hom_\F(S,V)$.
Let $x \in V \setminus \{0\}$. Since $S$ has trivial spectrum, there is no $u \in S$ such that $u(x)=x$; hence $\rk(\widehat{x}) \leq dn-1$.

Let us now take $x \in V \setminus \{0\}$ such that $\widehat{x}$ has the greatest possible rank $r \leq dn-1$ in $\widehat{S}$.
Since $|\F|>r$, the Tangent Space Lemma (Lemma \ref{lemma:tangentspace}) yields
$$\forall u \in \Ker \widehat{x}, \; \forall y \in V, \quad \widehat{y}(u) \in \im \widehat{x},$$
or in other words
$$\forall u \in S, \; u(x)=0 \Rightarrow \im u \subseteq S x.$$
Setting
$$S':=\Ker \widehat{x}=\{u \in S : \; u(x)=0\},$$
we can apply the rank theorem to get
\begin{equation}\label{eq1:thrang}
\dim(S)=\dim(S')+\dim_\F (S x) \leq \dim(S')+(dn-1).
\end{equation}
Now, the operators in $S'$ are actually $\D$-linear so their ranges are $\D$-linear subspaces of $V$,
all included in $S x$ and hence all included in the socle (see Definition \ref{def:socle}) of $S x$.
This socle does not contain $x$ (because $x \not\in S x$), and hence we can embed it into a
$\D$-hyperplane $H$ of $V$ such that $x\D \oplus H=V$, and we note that
$\im u \subseteq H$ for every $u \in S'$.

Hence, all the elements of $S'$ leave $H$ invariant, inducing $\D$-linear endomorphisms of $H$, all of them with trivial spectrum.
Since $x\D \oplus H=V$ and the elements of $S'$ vanish on $x\D$, the $\F$-linear mapping
$$u \in S' \mapsto u_{H} \in \End_\D(H)$$
is injective. Its range is a trivial spectrum subspace $\calT$ of $\End_\D(H)$.
Since $|\F| \geq (n-1)d$, we obtain by induction that
$$\dim(S') =\dim (\calT) \leq d \dbinom{n-1}{2}+(n-1)(d-1).$$
By combining this with \eqref{eq1:thrang}, we conclude that
$$\dim(S) \leq  (n-1)(d-1)+d \dbinom{n-1}{2}+((d-1)+d(n-1))=n(d-1)+d\dbinom{n}{2},$$
which completes the proof.

\subsection{Adaptation to deeply intransitive operator spaces}

We shall immediately prove the first part of Theorem \ref{theo:deepintransitive} by using essentially the same technique as in the previous proof.
We restate the result below:

\begin{prop}\label{prop:deepintransitivepointa}
Let $U$ and $V$ be $\D$-vector spaces, with $n=\dim_\D V \geq 1$. Assume that $|\F| \geq nd$.
Let $S$ be a deeply intransitive $\F$-linear subspace of $\Hom_\D(U,V)$.
Then $\dim S \leq \alpha(n,d)$.
\end{prop}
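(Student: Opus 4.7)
The plan is to adapt the proof of Theorem~\ref{theo:majodimsimple} to the setting of deeply intransitive operator spaces, proceeding by induction on $n := \dim_\D V$. The template is the same one used for the trivial spectrum bound: pick a vector $x \in U$ that produces an evaluation of maximal rank in the dual operator space, use the Tangent Space Lemma to force the range of certain operators into a proper $\D$-subspace, and invoke the induction hypothesis on that subspace. The only conceptual adjustment is that, instead of exploiting the condition $x \notin \calS x$, I will exploit plain intransitivity ($\calS x \subsetneq V$) and then use $\D$-linearity to pass to the socle.

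Concretely, I will pick $x \in U$ maximizing $r := \dim_\F(\calS x)$ and consider $\widehat{x}$ inside $\widehat{\calS} \subseteq \Hom_\F(\calS, V)$, in which $\widehat{x}$ attains maximal rank $r$. Intransitivity gives $\calS x \subsetneq V$, whence $r \leq nd - 1 < nd \leq |\F|$, so the Tangent Space Lemma (Lemma~\ref{lemma:tangentspace}) applies and yields: any $u \in \calS$ with $u(x) = 0$ satisfies $\im u \subseteq \calS x$. The key refinement compared with the trivial-spectrum proof is that $\im u$ is automatically a $\D$-subspace of $V$ (since $u$ is $\D$-linear), hence is contained in the socle $V'$ of $\calS x$ (Definition~\ref{def:socle}). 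Intransitivity forces $V' \subseteq \calS x \subsetneq V$, so $V'$ is a proper $\D$-subspace of $V$, with $n' := \dim_\D V' \leq n-1$.

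Setting $\calS' := \Ker \widehat{x}$, the rank theorem reads $\dim_\F \calS = \dim_\F \calS' + r$, and the previous step shows $\calS' \subseteq \calS^{V'}$. If $V' \neq \{0\}$, then $\calS^{V'}$ is deeply intransitive inside $\Hom_\D(U, V')$ (by the inheritance remark following the definition of deep intransitivity), and the cardinality assumption $|\F| \geq nd \geq n'd$ is preserved, so the induction hypothesis delivers $\dim_\F \calS' \leq \alpha(n', d) \leq \alpha(n-1, d)$; if $V' = \{0\}$ the bound is trivial since $\calS' = 0$. Combining with the elementary identity $\alpha(n,d) = \alpha(n-1,d) + nd - 1$, immediate from the closed form, I conclude $\dim_\F \calS \leq \alpha(n-1,d) + (nd-1) = \alpha(n,d)$. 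The base case $n = 1$ falls out of the same argument, since then the only proper $\D$-subspace is $\{0\}$, forcing $\calS' = 0$ and $\dim_\F \calS = r \leq d - 1 = \alpha(1,d)$. I do not anticipate a serious obstacle; the only point requiring a little vigilance beyond the trivial-spectrum proof is recognizing that the socle of $\calS x$—rather than an arbitrary $\D$-hyperplane containing the image—is the correct $\D$-subspace to feed into the induction, and that deep intransitivity rather than mere intransitivity is exactly what makes this inductive pass to $\calS^{V'}$ legal.
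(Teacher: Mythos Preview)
Your proof is correct and follows essentially the same approach as the paper's: induction on $n$, pick $x$ with $\widehat{x}$ of maximal rank in the dual operator space, apply the Tangent Space Lemma so that operators in $\calS'=\Ker\widehat{x}$ have range inside the socle of $\calS x$, and recurse. The only cosmetic difference is that the paper embeds the socle into a $\D$-hyperplane $H$ and applies the induction step directly at dimension $n-1$, whereas you work with the socle $V'$ itself (of possibly smaller $\D$-dimension $n'\leq n-1$) and use the monotonicity $\alpha(n',d)\leq\alpha(n-1,d)$; both routes are equivalent.
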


\begin{proof}
We denote by $r$ the transitive rank of $S$. If $r=0$ then $S=\{0\}$ and we are done.
Assume that $r>0$, and choose $x \in U$ such that $\dim_\F S x=r$.
Hence $\widehat{x}$ has the greatest possible rank in $\widehat{S}$, and as  $|\F|>r$
we can apply the tangent space lemma. This yields that all the operators in
$S':=\{f \in S : f(x)=0\}$ map into $S x$.
As in the previous proof, we can find a $\D$-linear hyperplane $H$ of $V$
into which all the elements of $S'$ map.
The rank theorem yields
$$\dim S=r+\dim S'.$$
Then there are two cases:
\begin{itemize}
\item If $n=1$ then $S'=\{0\}$ and we conclude that $\dim S \leq d-1$, as claimed;
\item If $n>1$ then $S'$ is a deeply intransitive $\F$-linear subspace of $\Hom_\D(U,H)$, by induction we find that
$$\dim S \leq (nd-1)+ \alpha(n-1,d)=\alpha(n,d),$$
and the proof is complete.
\end{itemize}
\end{proof}

Point (b) of Theorem \ref{theo:deepintransitive} is much more difficult when $d>1$, and we wait until Section \ref{section:bigproof} to prove it.
The next point is an easy consequence of Proposition \ref{prop:deepintransitivepointa}.

\begin{cor}\label{cor:sufficientconditiontargetreduced}
Let $S$ be a deeply intransitive $\F$-linear subspace of $\Hom_\D(U,V)$ such that
$\dim S \geq \alpha(n,d)-n+\max(3-d,1)$ and $n>0$.
Then $S$ is target-reduced.
\end{cor}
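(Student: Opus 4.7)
The plan is to proceed by contradiction: assume $\calS$ is not target-reduced and derive a dimension bound that clashes with the hypothesis. Set $V' := \sum_{f \in \calS} \im f$. Since every $f \in \calS$ is $\D$-linear, each $\im f$ is a $\D$-linear subspace of $V$, hence so is $V'$; and by assumption $V' \subsetneq V$, so $n' := \dim_\D V' \leq n-1$. The degenerate case $\calS = \{0\}$ can be dealt with immediately by noting that $\alpha(n,d) - n + \max(3-d, 1) > 0$ for every $n \geq 1$ and $d \geq 1$, which would contradict $\dim \calS = 0$; so I may assume $V' \neq \{0\}$.

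The next step is to view $\calS$ as a subspace of $\Hom_\D(U, V')$ and check that it remains deeply intransitive in this smaller ambient space. For every nonzero $\D$-linear subspace $W$ of $V'$, the set $\calS^W$ is the same whether computed inside $\Hom_\D(U, V)$ or inside $\Hom_\D(U, V')$; since such $W$ is also a nonzero $\D$-linear subspace of $V$, deep intransitivity of $\calS$ in $\Hom_\D(U, V)$ yields that $\calS^W$ is intransitive. Taking in particular $W = V'$ shows that $\calS = \calS^{V'}$ is intransitive in $\Hom_\D(U, V')$, so deep intransitivity transfers to the smaller ambient target.

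Proposition \ref{prop:deepintransitivepointa}, applied to $\calS$ as a deeply intransitive subspace of $\Hom_\D(U, V')$ (the cardinality hypothesis $|\F| \geq n'd$ being automatic from $|\F| \geq nd$), then yields $\dim \calS \leq \alpha(n', d) \leq \alpha(n-1, d)$. Using the relation $\alpha(n, d) = \alpha(n-1, d) + \alpha(1, d) + (n-1)d$ recalled in the introduction, this rewrites as $\dim \calS \leq \alpha(n, d) - (nd - 1)$. Combined with the hypothesis $\dim \calS \geq \alpha(n, d) - n + \max(3-d, 1)$, this forces $n(d-1) \leq 1 - \max(3-d, 1)$, and a routine case split on $d = 1$, $d = 2$ and $d \geq 3$ shows that this inequality fails for every $n \geq 1$, producing the desired contradiction. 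No step presents a genuine obstacle here: the only technical point is recognizing that $V'$ is automatically $\D$-linear, which is what allows the target-space reduction to feed directly into the dimension bound of Proposition \ref{prop:deepintransitivepointa}.
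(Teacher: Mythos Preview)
Your argument is correct and follows essentially the same route as the paper: assume $\calS$ is not target-reduced, pass to a smaller $\D$-linear target, invoke Proposition~\ref{prop:deepintransitivepointa} to get $\dim \calS \leq \alpha(n-1,d)$, and derive a numerical contradiction. The only cosmetic difference is that the paper invokes Remark~\ref{remark:Dtargetreduced} to obtain a $\D$-linear hyperplane $H \supseteq \sum_f \im f$, whereas you work directly with $V' = \sum_f \im f$ itself; since $\alpha(\cdot,d)$ is monotone this makes no difference to the bound.
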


\begin{Rem}\label{remark:Dtargetreduced}
In the proof and in future use of the result, it is crucial to note that the fact that $S$ is not target-reduced
is equivalent to the existence of a $\D$-linear hyperplane $H$ of $V$ (and not simply of an $\F$-linear hyperplane) that includes the range of every element of $S$.
The converse implication is obvious. For the direct one, assume that we have
an $\F$-linear hyperplane $V'$ of $V$ that includes the range of every operator in $S$.
Then all the operators in $S$ map into the socle of $V'$, which is a $\D$-linear hyperplane of $V$.
\end{Rem}

\begin{proof}[Proof of Corollary \ref{cor:sufficientconditiontargetreduced}]
Assume that $S$ is not target-reduced. Then there exists a $\D$-linear hyperplane $H$ of $V$ such that $S \subseteq \Hom_\D(U,H)$.

Because $S$ is deeply intransitive we can view it as a deeply intransitive subspace of $\Hom_\D(U,H)$, and then
Proposition \ref{prop:deepintransitivepointa} yields $\dim S \leq \alpha(n-1,d) =\alpha(n,d)-(nd-1)$.
Now $(nd-1)-n=n(d-1)-1$, and $n(d-1)-1 >-2$ if $d=1$, whereas $n(d-1) -1>-1$ whenever $d \geq 2$ and $n>0$.
Hence, in any case we contradict the asumption that $\dim S \geq \alpha(n,d)-n+\max(3-d,1)$.
\end{proof}

\section{A review of the method of generic matrices}\label{section:genericmatrices}

Here we review the technique of generic matrices that was introduced by Atkinson and Lloyd
\cite{AtkLloydPrim,AtkinsonPrim} (see also \cite{dSPLLD2}).

\subsection{Generic matrices}

Let $E$ be an $\F$-vector space. We consider its dual vector space $E^\star:=\Hom_\F(E,\F)$
and the (graded) symmetric algebra $R=S(E^\star)$ over $\F$ (which is the direct sum of the symmetric tensor spaces $S^n(E^\star)$ for $n \geq 0$)
and its field of fractions $\mathbb{L}$. We naturally view $\F$ as a subfield of $\mathbb{L}$.

The $1$-homogeneous elements of $R$ constitute the space $S^1(E^\star)=E^\star$ of symmetric $1$-tensors.
We have a canonical ring homomorphism $S(E^\star) \rightarrow \mathrm{Pol}(E,\F)$ to the ring of polynomial functions on $E$
(it is an isomorphism if $\F$ is infinite). Hence for every $\mathbf{p} \in S(E^\star)$ and every $z \in E$ we can consider the specialization
$\mathbf{p}(z) \in \F$. More generally, given a matrix $\mathbf{M} \in \Mat_{m,p}(R)$ and a vector $z \in E$, we denote by
$\mathbf{M}(z)$ the matrix obtained by specializing all the entries of $\mathbf{M}$ at $z$.

\begin{Def}
Let $m,p$ be positive integers, and let $\calM$ be an $\F$-linear subspace of $\Mat_{m,p}(\F)$.
A \textbf{generic matrix}\footnote{Beware that in \cite{dSPLLD1}, such a matrix is called a semi-generic matrix.} for $\calM$ (with respect to $R$) consists of a matrix
$\mathbf{M}$ whose entries are $1$-homogeneous elements of $R$ and
such that
$$\calM=\{\mathbf{M}(z) \mid z \in E\}.$$
\end{Def}

\begin{Rem}
In \cite{AtkLloydPrim,AtkinsonPrim,dSPLLD2}, the authors only dealt with polynomial rings that are formally of the form $\F[\mathbf{t_1},\dots,\mathbf{t_r}]$.
Our viewpoint here will turn out to be more convenient, although it is not fundamentally more general than the traditional one.
Indeed, if we have a basis $(e_1,\dots,e_r)$ of the $\F$-vector space $E$, we have an isomorphism of $\F$-algebras from
$\F[\mathbf{t_1},\dots,\mathbf{t_r}]$ to $S(E^\star)$ that maps each indeterminate $\mathbf{t_i}$ to the $1$-symmetric tensor $e_i^\star$, the $i$-th vector of the dual basis
of $(e_1,\dots,e_r)$.
\end{Rem}

A critical property of generic matrices, when seen as matrices with entries in the fraction field $\mathbb{L}$,
is that their rank coincides with the maximal rank of $\calM$ when the field $\F$ is large enough.

\begin{Not}\label{not:maxrank}
For a nonempty subset $\calM$ of $\Mat_{m,p}(\F)$, we set
$$\maxrk(\calM):=\max \{\rk M \mid M \in \Mat_{m,p}(\F)\}.$$
\end{Not}

The following classical lemma will be used later, and it also has a direct application to generic matrices:

\begin{lemma}\label{prop:spanrank}
Let $\F'$ be an arbitrary field extension of $\F$, and $\calM$ be an $\F$-linear subspace of $\Mat_{m,p}(\F)$.
Assume that $|\F|>\maxrk(\calM)$. Then
$$\maxrk(\Vect_{\F'}(\calM))=\maxrk(\calM).$$
\end{lemma}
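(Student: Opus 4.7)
The plan is to pass to the generic matrix of $\calM$ and show that its rank over the fraction field already coincides with $\maxrk(\calM)$; specialization will then give the lemma. Concretely, pick an $\F$-basis $(M_1,\dots,M_k)$ of $\calM$ and form $\mathbf{M}:=\sum_{i=1}^k t_i M_i\in\Mat_{m,p}(R)$ with $R=\F[t_1,\dots,t_k]$ and fraction field $\mathbb{L}$; set $r:=\maxrk(\calM)$ and $\rho:=\rk_{\mathbb{L}}(\mathbf{M})$. If $\rho=r$, the lemma is immediate: an arbitrary $M\in\Vect_{\F'}(\calM)$ can be written as $\mathbf{M}(a)$ for some $a\in(\F')^k$, and since all $(r+1)\times(r+1)$ minors of $\mathbf{M}$ would then be zero in $R$, they remain zero after the specialization $t\mapsto a$, giving $\rk M\leq r$; the reverse inequality is trivial.

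The inequality $\rho\geq r$ is immediate: a nonvanishing $r\times r$ minor of some rank-$r$ element $M_0\in\calM$ lifts to a minor of $\mathbf{M}$ that does not vanish at the $\F$-specialization representing $M_0$, hence is a nonzero element of $R$. For $\rho\leq r$, I would inspect an arbitrary $(r+1)\times(r+1)$ minor $\mu$ of $\mathbf{M}$: this is a homogeneous polynomial of degree $r+1$ in $t_1,\dots,t_k$, and for every $a\in\F^k$ the specialization $\mathbf{M}(a)$ lies in $\calM$ and so has rank at most $r$, forcing $\mu(a)=0$. The task is then to deduce $\mu=0$ in $R$ from the hypothesis $|\F|>r$, i.e.\ $|\F|\geq r+1$.

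This reduces everything to the following combinatorial fact, which is the main obstacle: a nonzero homogeneous polynomial of degree $d$ in $k$ variables over $\F$ cannot vanish identically on $\F^k$ as soon as $|\F|\geq d$. The ordinary Schwartz--Zippel bound only yields this under the strict inequality $|\F|>d$, so a one-unit sharpening, using homogeneity, is required. I would prove it by induction on $k$: writing $\mu=\sum_j\mu_j(t_1,\dots,t_{k-1})\,t_k^j$ with each $\mu_j$ homogeneous of degree $d-j$, and fixing $a'\in\F^{k-1}$, the univariate polynomial $t_k\mapsto\mu(a',t_k)$ of degree $\leq d$ vanishes on all of $\F$. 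When $|\F|>d$ it must be zero and the $\mu_j(a')$ vanish; in the extremal case $|\F|=d$ it equals $c(t_k^d-t_k)$ for some $c\in\F$, forcing $\mu_d\equiv c$ and $\mu_1\equiv -c$ on $\F^{k-1}$, but since $\mu_1$ is homogeneous of positive degree $d-1$ it has no constant term, so $c=0$ and again all $\mu_j(a')$ vanish. Induction on $k$ then gives $\mu_j=0$ for all $j$, hence $\mu=0$, which closes the argument.
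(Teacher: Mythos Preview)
Your proof is correct. The paper does not actually supply a proof of this lemma: it labels the statement ``classical'' and moves directly to deriving Corollary~\ref{cor:genericrank} from it, so there is no original argument to compare against.

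Your route via the generic matrix and the vanishing of all $(r+1)\times(r+1)$ minors is the standard one. The only nontrivial point is the sharpening of Schwartz--Zippel under homogeneity (a nonzero homogeneous polynomial of degree $d$ cannot vanish on $\F^k$ once $|\F|\geq d$, rather than $|\F|>d$), and your inductive treatment of the borderline case $|\F|=d$ is sound: the key observations that $\mu_d$ is constant (degree $0$) and that $\mu_1$, being homogeneous of degree $d-1\geq 1$ (since $|\F|\geq 2$), must vanish at the origin, together force the auxiliary constant $c$ to be $0$, after which the induction on $k$ closes. The deduction for $\Vect_{\F'}(\calM)$ is then immediate by specialization, as you say.
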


By working with $\F'=\mathbb{L}$, it is easy to derive the following corollary:

\begin{cor}[See lemma 2.1 in \cite{dSPLLD1}]\label{cor:genericrank}
Let $\calM$ be an $\F$-linear subspace of $\Mat_{m,p}(\F)$, and $\mathbf{M}$ be a generic matrix of it.
Assume that $|\F|>\maxrk(\calM)$.
Then
$$\rk_\mathbb{L} (\mathbf{M})=\maxrk(\calM).$$
\end{cor}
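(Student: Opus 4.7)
The plan is to prove the inequality in each direction, treating the easy one first and then reducing the second to the quoted Lemma \ref{prop:spanrank}.

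First, I would establish $\maxrk(\calM) \leq \rk_\mathbb{L}(\mathbf{M})$ with no cardinality assumption at all. Setting $r := \rk_\mathbb{L}(\mathbf{M})$, every $(r+1) \times (r+1)$ minor of $\mathbf{M}$ vanishes as an element of $R \subseteq \mathbb{L}$, hence vanishes after specialization at any $z \in E$. Therefore $\rk(\mathbf{M}(z)) \leq r$ for every $z \in E$, and since $\calM = \{\mathbf{M}(z) : z \in E\}$, this gives $\maxrk(\calM) \leq \rk_\mathbb{L}(\mathbf{M})$.

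For the reverse inequality, the key step is to realize $\mathbf{M}$ explicitly as an $\mathbb{L}$-linear combination of matrices of $\calM$. Pick a basis $(e_1,\dots,e_r)$ of $E$ and its dual basis $(e_1^\star,\dots,e_r^\star)$ of $E^\star = S^1(E^\star)$. Since every entry of $\mathbf{M}$ is $1$-homogeneous, there exist unique matrices $M_1,\dots,M_r \in \Mat_{m,p}(\F)$ such that
$$\mathbf{M} = \sum_{i=1}^r e_i^\star \, M_i \quad \text{in } \Mat_{m,p}(\mathbb{L}).$$
Specializing at $z = e_j$ gives $\mathbf{M}(e_j) = M_j$, which belongs to $\calM$ by hypothesis. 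Consequently $\mathbf{M} \in \Vect_\mathbb{L}(\calM)$ inside $\Mat_{m,p}(\mathbb{L})$.

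It only remains to feed this into the quoted lemma. Since $\mathbf{M}$ is a single matrix in $\Vect_\mathbb{L}(\calM)$, we have $\rk_\mathbb{L}(\mathbf{M}) \leq \maxrk(\Vect_\mathbb{L}(\calM))$. Applying Lemma \ref{prop:spanrank} with the extension $\F' = \mathbb{L}$---whose hypothesis is exactly $|\F| > \maxrk(\calM)$---we conclude $\maxrk(\Vect_\mathbb{L}(\calM)) = \maxrk(\calM)$, and combining this with the easy direction yields $\rk_\mathbb{L}(\mathbf{M}) = \maxrk(\calM)$.

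There is essentially no obstacle in the argument itself: the only substantive content is Lemma \ref{prop:spanrank}, which is assumed. The small conceptual point to verify---and the only place where one might slip---is that the decomposition $\mathbf{M} = \sum_i e_i^\star M_i$ really does place $\mathbf{M}$ inside the $\mathbb{L}$-span of $\calM$ (not merely of some larger space of $\F$-matrices), which is immediate once one notes that each $M_i = \mathbf{M}(e_i)$ lies in $\calM$ by the very definition of a generic matrix.
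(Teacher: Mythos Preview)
Your proof is correct and matches the paper's intended argument exactly: the paper indicates that one works with $\F'=\mathbb{L}$ in Lemma~\ref{prop:spanrank}, and immediately after the corollary it records the key observation $\mathbf{M}=\sum_k e_k^\star\,\mathbf{M}(e_k)\in \Vect_{\mathbb{L}}(\calM)$, which is precisely your decomposition. Your treatment of the easy direction via vanishing of $(r+1)\times(r+1)$ minors is the standard one and requires no cardinality hypothesis, just as you note.
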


In connection with this corollary, we remark that the generic matrix $\mathbf{M}$ belongs to $\Vect_{\mathbb{L}}(\calM)$, a fact that we will reuse later.
To see this, we take a basis $(e_1,\dots,e_p)$ of $E$, consider the dual basis $(e_1^\star,\dots,e_p^\star)$,
and note that $\mathbf{M}=\sum_{k=1}^p e_k^\star\, \mathbf{M}(e_k)$.

\subsection{Notions of ranks for generic row matrices}

Let us now discuss row matrices over the ring $R$.

\begin{Def}\label{def:spanrank}
Let $\mathbf{L} \in \Mat_{1,n}(R)$ be a row matrix whose entries are $1$-homogeneous elements of $R$.
The \textbf{spanning rank} of $\mathbf{L}$, denoted by $\sprk (\mathbf{L})$, is defined as the dimension (over $\F$) of the specialized
linear subspace $\{\mathbf{L}(z) \mid z \in E \}$ of $\Mat_{1,n}(\F)$.
\end{Def}

Of course, one should not confuse the spanning rank of $\mathbf{L}$ with its (traditional) rank, i.e., its rank as a matrix of $\Mat_{1,n}(\mathbb{L})$,
which equals $0$ or $1$ (whereas the spanning rank is an integer between $0$ and $n$).

The following lemma (lemma 5.2 in \cite{dSPLLD2}) will be particularly important in our study:

\begin{lemma}[Factorization Lemma]\label{lemma:genericcollinearity}
Let $n \geq 1$ and $\delta \geq 1$, let $\mathbf{X}$ and $\mathbf{Y}$ be two vectors of $\Mat_{1,n}(R)$ and assume that:
\begin{enumerate}[(i)]
\item The entries of $\mathbf{X}$ are $1$-homogeneous and the ones of $\mathbf{Y}$ are $\delta$-homogeneous.
\item The spanning rank of $\mathbf{X}$ is greater than $1$.
\item The vectors $\mathbf{X}$ and $\mathbf{Y}$ are linearly dependent in the $\mathbb{L}$-vector space $\Mat_{1,n}(\mathbb{L})$.
\end{enumerate}
Then $\mathbf{Y}=\mathbf{p}\,\mathbf{X}$ for some $(\delta-1)$-homogeneous $\mathbf{p} \in R$.
\end{lemma}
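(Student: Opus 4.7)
The plan is to reduce the claim to an elementary divisibility argument in the polynomial ring $R=S(E^\star)$, which is a UFD (it is a polynomial ring in a basis of $E^\star$, whether finite or not). If $\mathbf{Y}=0$ the conclusion holds with $\mathbf{p}=0$, so I assume $\mathbf{Y}\neq 0$. Since $\sprk(\mathbf{X})>1$ forces $\mathbf{X}\neq 0$, hypothesis (iii) supplies a unique $h\in \mathbb{L}\setminus\{0\}$ such that $\mathbf{Y}=h\,\mathbf{X}$, and the whole task becomes showing that $h$ actually lies in $R$ and is $(\delta-1)$-homogeneous.

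First I would write $h=a/b$ in lowest terms in $R$, so that $b\,Y_i=a\,X_i$ in $R$ for every $i$; by coprimality of $a$ and $b$, this forces $b$ to divide each $X_i$. Suppose for contradiction that $b$ is not a unit. A standard degree computation in $R$ shows that any non-unit divisor of a nonzero $1$-homogeneous element has degree exactly $1$; writing $b=b_0+b_1$ into its homogeneous components of degrees $0$ and $1$ and using $X_i=b\,q_i$ with $q_i\in\F$ (forced by degrees), I would then observe that $b_0\,q_i=0$, so either $X_i=0$ or $b_0=0$. Since $\mathbf{X}\neq 0$, picking an $i$ with $X_i\neq 0$ yields $b_0=0$, hence $b=b_1$ is $1$-homogeneous and each $X_i$ is an $\F$-multiple of $b$. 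But this contradicts $\sprk(\mathbf{X})>1$, so $b$ must be a unit and $h\in R$.

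Finally, for the homogeneity of $h$, I would pick any index $i$ with $X_i\neq 0$, decompose $h=\sum_k h_k$ into homogeneous components of $R$, and inspect $Y_i=h\,X_i=\sum_k h_k\,X_i$. Since $Y_i$ is $\delta$-homogeneous and $X_i\neq 0$ is $1$-homogeneous, comparing homogeneous components of degree $k+1$ forces $h_k=0$ for every $k\neq \delta-1$, so $h=h_{\delta-1}$ is $(\delta-1)$-homogeneous and one takes $\mathbf{p}=h$. The main obstacle, in my view, is the careful degree/coprimality bookkeeping needed to rule out a non-unit denominator $b$, because this is the step where the two hypotheses (homogeneity of $\mathbf{X}$ in degree $1$ and spanning rank strictly greater than $1$) must both come into play; once that step is cleared, the refinement on the homogeneity of $h$ is a routine comparison of graded pieces.
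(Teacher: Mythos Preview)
Your argument is correct. The paper does not actually prove this lemma: it is quoted verbatim as lemma~5.2 of \cite{dSPLLD2} and used as a black box, so there is no ``paper's own proof'' to compare with here. Your reduction to a UFD divisibility statement is exactly the natural route: writing $\mathbf{Y}=h\mathbf{X}$ with $h=a/b$ in lowest terms, the key step is that $b$ must divide every entry $X_i$, and since nonzero linear forms are irreducible in the graded domain $R$, a non-unit $b$ would force all the $X_i$ to be $\F$-multiples of a single linear form, contradicting $\sprk(\mathbf{X})>1$. One minor remark: your decomposition $b=b_0+b_1$ is in fact unnecessary, because any divisor of a nonzero homogeneous element in a graded integral domain is automatically homogeneous (compare lowest and highest graded pieces of the factorization), so once $\deg b\leq 1$ and $b$ is a non-unit you get $b\in E^\star$ directly. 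But the argument as you wrote it is sound.
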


\subsection{Recognizing alternators in generic matrix identities}

We finish this introduction to generic matrices by discussing left annihilators of generic matrices and their relationship with
alternators.

\begin{Def}\label{def:catchers}
Let $\mathbf{M} \in \Mat_{m,p}(R)$. A \textbf{left-annihilator} of $\mathbf{M}$ is a row matrix $\mathbf{L} \in \Mat_{1,m}(R)$
such that that $\mathbf{L}\mathbf{M}=0$. A \textbf{catcher} of $\mathbf{M}$ is a left-annihilator of $\mathbf{M}$
whose entries are $1$-homogeneous.
We denote by $\catch(\mathbf{M})$ the $\F$-linear subspace of $\Mat_{1,m}(R)$ that consists of all the catchers of $\mathbf{M}$,
and call it the \textbf{catcher space} of $\mathbf{M}$.
\end{Def}

Catchers are intimately connected with alternators, as we shall now see.

Let us take finite-dimensional vector spaces $U$ and $V$ over $\F$, a linear subspace $S$ of $\Hom_\F(U,V)$,
and let $\bfB_S=(v_1,\dots,v_n)$ and $\bfB_V=(e_1,\dots,e_p)$ be respective bases of $S$ and $V$.

\emph{Throughout, we assume that $S$ is target-reduced} (see the definition in Section \ref{section:strategy}),
to the effect that $\widehat{S}$ is also target-reduced.
We consider the  matrix space $\calM$ that represents the dual operator space $\widehat{S}$ in $\bfB_S$ and $\bfB_V$, and
we take an arbitrary generic matrix $\mathbf{M}$ that is associated with $\calM$, with parameter space denoted by $E$.
We define the following linear mappings:
\begin{itemize}
\item $\Phi$ assigns to every $z \in E$ the operator in $\widehat{S}$ whose matrix in the bases
$\bfB_S$ and $\bfB_V$ is $\mathbf{M}(z)$;
\item $\widehat{(-)}$ assigns to every $y \in U$ the evaluation mapping $\widehat{y} \in \widehat{S}$.
\end{itemize}
These mappings are represented in the following diagram
$$\xymatrix{ & E \ar@{>>}[d]^\Phi  \\
U \ar@{>>}[r]^{\widehat{(-)}} & \widehat{S}.
}$$
Now, we consider an alternator $b$ of $S$. The left mapping $L_b : y \in U \mapsto b(y,-) \in V^\star$
can then be inserted into the following diagram:
$$\xymatrix{ & E \ar@{>>}[d]^\Phi & \\
U \ar@{>>}[r]^{\widehat{(-)}} \ar@/_1pc/[rr]_{L_b} & \widehat{S}  & V^\star.
}$$
Next, we prove that $L_b$ factors into the composite of $\widehat{(-)}$ with a unique linear mapping $\Theta : \widehat{S} \rightarrow V^\star$.
To see this, we apply the factorization lemma for linear maps: all we need is to check that
$L_b$ vanishes at every vector $y \in U$ such that $\widehat{y}=0$.
So, let $y \in U$ satisfy $\widehat{y}=0$. Hence, for all $f \in S$, we have $b(y,f(z))=-b(z,f(y))=0$ because $f$ is $b$-alternating.
Because $S$ is target-reduced, this yields $y \in \Ker L_b$, as claimed.

The previous diagram is then completed as:
$$\xymatrix{ & E \ar@{>>}[d]^\Phi \ar@{-->}[dr]^{\Psi} & \\
U \ar@{>>}[r]^{\widehat{(-)}} \ar@/_1pc/[rr]_{L_b} & \widehat{S} \ar@{-->}[r]^{\Theta}  & V^\star,
}$$
where $\Psi:=\Theta \circ \Phi$. Finally, we consider the matrix
$\mathbf{L} \in \Mat_{1,p}(R)$ whose entries are the composite linear forms $z \in E \mapsto \Psi[z](e_i)$, with $i \in \lcro 1,p\rcro$.
We claim that $\mathbf{L} \mathbf{M}=0$.
Since the entries of  $\mathbf{L} \mathbf{M}$ are homogeneous of degree $2$ in $R$, it will suffice
to prove that $\forall z \in E, \; \mathbf{L}(z) \mathbf{M}(z)=0$.
Letting $z \in E$ and choosing $y \in U$ such that $\Phi(z)=\widehat{y}$, we see from the definition that, for all $j \in \lcro 1,n\rcro$, the $j$-th entry of
$\mathbf{L}(z) \mathbf{M}(z)$ equals $\sum_{i=1}^p \Psi[z](e_i)\, e_i^\star(v_j(y))$, where $(e_1^\star,\dots,e_p^\star)$
stands for the dual basis of $(e_1,\dots,e_p)$. Hence
$$\forall j \in \lcro 1,n\rcro, \; \bigl(\mathbf{L}(z) \mathbf{M}(z)\bigr)_j=\Psi[z][v_j(y)]=\Theta(\widehat{y})[v_j(y)]=b(y,v_j(y))=0.$$
Moreover, if $\mathbf{L}=0$ then we successively find $\Psi=0$, $\Theta=0$ and finally $b=0$.
Assigning $\mathbf{L}$ to $b$ thus creates an injective linear mapping
$$\Lambda : \Alt(S) \hookrightarrow \catch(\mathbf{M}).$$

Conversely, let $\mathbf{L}$ be a catcher of $\mathbf{M}$.
Let $\Psi : E \rightarrow V^\star$ assign to every $z \in E$ the linear form
on $V$ whose matrix in the basis $\bfB_V$ is $\mathbf{L}(z)$.
We claim that there is a (unique) factorization $\Psi=\Theta \circ \Phi$ for some linear mapping $\Theta : \widehat{S} \rightarrow V^\star$.
Again, we use the factorization lemma. Let $z_0 \in \Ker \Phi$.
Let us polarize the quadratic identity
$\mathbf{L}\mathbf{M}=0$ at $z_0$. Because $\mathbf{M}(z_0)=0$ by assumption, this yields
$\mathbf{L}(z_0)M=0$ for all $M \in \calM$. In other words, every operator in $\widehat{S}$ maps into $\Ker \Psi(z_0)$.
Since $\widehat{S}$ is target-reduced, we deduce that $\Psi(z_0)=0$. Hence the existence of $\Theta$, the uniqueness of which is obvious because $\Phi$ is onto.
Finally, we consider the composite mapping
$$y \in U \longmapsto \Theta(\widehat{y}) \in V^\star$$
and the associated bilinear form
$$b : (y,z)\in U \times V \longmapsto \Theta(\widehat{y})[z].$$
Then, by the same computation as in the above we deduce from the identity
$\mathbf{L}\mathbf{M}=0$ that $b$ is an alternator of $\calV$.
And finally it is clear that $\Lambda(b)=\mathbf{L}$.

We conclude that $\Lambda : \Alt(\calV) \overset{\simeq}{\longrightarrow} \catch(\mathbf{M})$ is a vector space isomorphism.

\subsection{Simple applications to deeply intransitive operator spaces}

We immediately give a simple application of the previous considerations to the
structure of the alternator space of an intransitive operator space.

\begin{prop}\label{prop:quasitransitive}
Let $U$ and $V$ be finite-dimensional vector spaces over $\D$, with $n:=\dim_\D V$.
Let $S$ be a target-reduced $\F$-linear subspace of $\Hom_\D(U,V)$.
Assume that $\trk S=nd-1$ and $|\F| \geq nd$. Then $\dim \Alt(S) \leq 1$.
\end{prop}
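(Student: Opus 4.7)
The plan is to apply the generic matrix machinery from Section \ref{section:genericmatrices}. Since $\calS$ is target-reduced, the setup of Section 4.3 produces a matrix space $\calM$ representing $\widehat{\calS}$ (in some chosen bases), a generic matrix $\mathbf{M}$ for $\calM$ with $nd$ rows, and an $\F$-linear isomorphism $\Lambda : \Alt(\calS) \overset{\simeq}{\longrightarrow} \catch(\mathbf{M})$. It therefore suffices to prove $\dim_\F \catch(\mathbf{M}) \leq 1$.

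The key numerical input is $\maxrk(\calM) = \trk(\calS) = nd-1$ (since the rank of $\widehat{y}$ equals $\dim_\F \calS y$). As $|\F| \geq nd > nd-1$, Corollary \ref{cor:genericrank} gives $\rk_{\mathbb{L}}(\mathbf{M}) = nd-1$, so the left kernel of $\mathbf{M}$ in $\Mat_{1,nd}(\mathbb{L})$ is a single $\mathbb{L}$-line. Hence any two catchers $\mathbf{L}_1, \mathbf{L}_2$ are $\mathbb{L}$-linearly dependent. To turn this $\mathbb{L}$-dependence into $\F$-proportionality, I would invoke the Factorization Lemma (Lemma \ref{lemma:genericcollinearity}): provided $\sprk(\mathbf{L}_1) > 1$, the $\mathbb{L}$-dependence of the two $1$-homogeneous row vectors $\mathbf{L}_1$ and $\mathbf{L}_2$ yields $\mathbf{L}_2 = \mathbf{p}\,\mathbf{L}_1$ with $\mathbf{p}$ a $0$-homogeneous element of $R$, i.e., a scalar.

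The only genuine task, and the main obstacle, is thus the following spanning-rank claim: every nonzero catcher $\mathbf{L} \in \catch(\mathbf{M})$ satisfies $\sprk(\mathbf{L}) \geq 2$. I would argue by contradiction: if $\sprk(\mathbf{L}) \leq 1$, there exists a fixed nonzero row $c \in \F^{1 \times nd}$ such that every specialization $\mathbf{L}(z)$ lies in $\F c$. Since the entries of $\mathbf{L}$ are $1$-homogeneous, picking an index $i$ with $c_i \neq 0$ and setting $\phi := \mathbf{L}_i / c_i \in E^\star$ yields a nonzero $1$-homogeneous $\phi$ with $\mathbf{L} = \phi \cdot c$ in $R^{1 \times nd}$. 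The catcher identity $\mathbf{L}\mathbf{M} = 0$ then reads $\phi \cdot (c\mathbf{M}) = 0$; as $R$ is a domain and $\phi \neq 0$, this forces $c\mathbf{M} = 0$, and specialization gives $cM = 0$ for every $M \in \calM$. Because $\calS$ is target-reduced (hence so is $\widehat{\calS}$), the union of the images of operators in $\widehat{\calS}$ spans $V$, so the linear form on $V$ represented by $c$ vanishes on all of $V$; thus $c = 0$, a contradiction.

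Combining the two pieces: any two nonzero catchers are $\F$-proportional, so $\dim_\F \catch(\mathbf{M}) \leq 1$, and therefore $\dim_\F \Alt(\calS) \leq 1$. The subtlety of the argument lies entirely in ruling out the spanning-rank~$1$ regime, which is where the Factorization Lemma is silent; target-reducedness is precisely what is needed to close that gap.
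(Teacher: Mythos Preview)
Your proof is correct and follows essentially the same approach as the paper's: both use the alternator--catcher isomorphism from Section~4.3, invoke Corollary~\ref{cor:genericrank} to get $\rk_{\mathbb{L}}\mathbf{M}=nd-1$, rule out spanning rank~$1$ for a nonzero catcher via target-reducedness (writing $\mathbf{L}=\phi\cdot c$ and deducing $c\mathbf{M}=0$), and then apply the Factorization Lemma to conclude that two catchers are $\F$-proportional. The only cosmetic difference is that the paper starts from two nonzero alternators and converts them to catchers, whereas you work directly in $\catch(\mathbf{M})$ from the outset.
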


\begin{proof}
We choose respective bases $\bfB_V=(e_1,\dots,e_{nd})$ and $\bfB_S=(f_1,\dots,f_N)$ of $V$ and $S$ seen as $\F$-vector spaces,
and we consider the generic matrix $\mathbf{M}$ associated with $x \in U \mapsto \widehat{x} \in \widehat{S}$
in these bases.
The condition that $\trk S=nd-1$ yields $\rk \mathbf{M}=nd-1$, by Corollary \ref{cor:genericrank}.

Let $b_1$ and $b_2$ be nonzero alternators of $S$. We shall prove that $b_1$ and $b_2$ are linearly dependent over $\F$.

We apply the procedure explained in the previous paragraph to obtain
associated catchers $\mathbf{B}_1$ and $\mathbf{B}_2$ of $\mathbf{M}$.
Since $\rk \mathbf{M}=nd-1$, these catchers are linearly dependent over the field $\mathbb{L}$ of fractions of the
ring $R$ of symmetric tensors of the dual space $U^\star$.

Now, assume that $\sprk \mathbf{B}_1=1$. Then $\mathbf{B_1}=\mathbf{b_1} B_1$ for some nonzero row $B_1 \in \Mat_{1,nd}(\F) \setminus \{0\}$
and some $1$-homogeneous element $\mathbf{b_1}$ of $R \setminus \{0\}$.
Then the identity $\mathbf{B_1} \mathbf{M}=0$ leads to $\mathbf{b_1}(B_1 \mathbf{M})=0$, and hence to $B_1 \mathbf{M}=0$.
The matrix $B_1$ represents a linear form $\varphi$ in the dual basis of $\mathbf{B}_V$, and the statement that $B_1 \mathbf{M}=0$
means that all the operators in $\widehat{S}$ map into the kernel of $\varphi$. Hence so do all the operators in $S$,
and we contradict the assumption that $S$ is target-reduced.

Hence $\sprk \mathbf{B}_1>1$ and we deduce from the Factorization Lemma that $\mathbf{B_2}=\alpha \mathbf{B}_1$
for some scalar $\alpha \in \F$. This yields $b_2=\alpha b_1$. We conclude that $\Alt(S)$ has dimension at most $1$.
\end{proof}

\begin{prop}\label{prop:primitiveintransitivitytodeep}
Let $U$ and $V$ be finite-dimensional vector spaces over $\D$, with $n:=\dim_\D V$.
Let $S$ be an $\F$-linear subspace of $\Hom_\D(U,V)$, and assume that $|\F| \geq nd$.
\begin{enumerate}[(a)]
\item If $S$ is weakly primitively intransitive, then it is deeply intransitive.
\item If $S$ is primitively intransitive, then it is deeply intransitive.
\end{enumerate}
\end{prop}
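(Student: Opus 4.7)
The plan is to prove (a) directly via the generic matrix machinery of Section \ref{section:genericmatrices}; then (b) follows at once from (a) and Proposition \ref{prop:strongtoweakprimitive}.

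Intransitivity of $\calS$ comes for free: if $\trk(\calS) = nd$, then for any $1$-dimensional $\D$-linear subspace $V_0$ of $V$, the projection $\pi_{V_0}: V \twoheadrightarrow V/V_0$ satisfies $\trk(\pi_{V_0}\calS) \leq \dim_\F(V/V_0) = nd - d = \trk(\calS) - d$, violating weak primitive intransitivity. So $r := \trk(\calS) \leq nd - 1$.

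I then argue deep intransitivity by contradiction: suppose some nonzero $\D$-linear subspace $V'$ of $V$ satisfies $\calS^{V'} x_0 = V'$ for some $x_0 \in U$. Since $\calS$ is intransitive, $V' \neq V$, and $m := \dim_\F V'$ satisfies $d \leq m \leq nd - 1$. My target is to exhibit a $\D$-line $V_0 \subseteq V'$ with $\trk(\pi_{V_0}\calS) = r - d$, contradicting weak primitive intransitivity. To this end, I pick a basis $(e_1,\ldots,e_{nd})$ of $V$ such that $V_0 := \Vect_\F(e_1,\ldots,e_d)$ is a $\D$-line contained in $V'$ and $V' = \Vect_\F(e_1,\ldots,e_m)$, together with a basis $(f_1,\ldots,f_N)$ of $\calS$ such that $\calS^{V'} = \Vect_\F(f_1,\ldots,f_k)$. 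The generic matrix $\mathbf{M}$ of the dual operator space $\widehat{\calS}$ (parameter space $U$) in these bases then has the block-triangular form
$$\mathbf{M} = \begin{bmatrix} \mathbf{M}_{11} & \mathbf{M}_{12} \\ 0 & \mathbf{M}_{22} \end{bmatrix},$$
with $\mathbf{M}_{11}$ of size $m \times k$ and the zero bottom-left block reflecting that $\im f_j \subseteq V'$ for $j \leq k$. Because $|\F| \geq nd$ dominates every maximal rank in sight, Corollary \ref{cor:genericrank} gives $\rk_\mathbb{L}\mathbf{M} = r$ and, by the transitivity of $\calS^{V'}$, $\rk_\mathbb{L}\mathbf{M}_{11} = \trk(\calS^{V'}) = m$, so $\mathbf{M}_{11}$ has full row rank over $\mathbb{L}$.

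The rest is a block-rank computation. The zero bottom-left block, combined with the full row rank of $\mathbf{M}_{11}$, forces the top $m$ rows of $\mathbf{M}$ to be $\mathbb{L}$-linearly independent from the bottom $nd - m$ rows (any dependence has zero left block, hence by the linear independence of the rows of $\mathbf{M}_{11}$ must be identically zero). Hence $r = m + \rk_\mathbb{L}\mathbf{M}_{22}$, whence $\rk_\mathbb{L}\mathbf{M}_{22} = r - m$. Deleting the top $d$ rows from $\mathbf{M}$ (those corresponding to $V_0$) yields a matrix $\mathbf{M}'$ whose surviving top block consists of the last $m - d$ rows of $\mathbf{M}_{11}$, still $\mathbb{L}$-linearly independent; the same argument gives $\rk_\mathbb{L}\mathbf{M}' = (m-d) + (r-m) = r - d$. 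Since $\mathbf{M}'$ is a generic matrix of the matrix space representing $\pi_{V_0}\widehat{\calS}$, whose maximal rank is precisely $\trk(\pi_{V_0}\calS)$, Corollary \ref{cor:genericrank} delivers $\trk(\pi_{V_0}\calS) = r - d$, yielding the promised contradiction. The main subtlety is recognizing that the zero bottom-left block together with the full row rank of $\mathbf{M}_{11}$ makes the block ranks add; once this is established, the rest is straightforward linear algebra over $\mathbb{L}$.
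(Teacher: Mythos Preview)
Your proof is correct and follows essentially the same approach as the paper's: set up a block-triangular generic matrix for $\widehat{\calS}$ adapted to $V'$ and $\calS^{V'}$, exploit the full row rank of the $\widehat{\calS^{V'}}$-block coming from the transitivity of $\calS^{V'}$, delete the $d$ rows corresponding to a $\D$-line inside $V'$, and read off a drop of $d$ in the generic rank to contradict weak primitive intransitivity. The only cosmetic differences are that the paper puts the $\calS^{V'}$-columns last rather than first, uses the subadditivity inequality $\rk\mathbf{C}+\rk\mathbf{B}\leq\rk\mathbf{M}$ where you establish the sharper equality, and leaves the intransitivity of $\calS$ itself implicit (it is the case $V'=V$).
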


\begin{proof}
Assume that $S$ is not deeply intransitive. Then we can find
a nonzero $\D$-linear subspace $V'$ of $V$ such that $S^{V'}$ is not intransitive.
Denote by $\pi : V \twoheadrightarrow V/V'$ the standard projection, and consider the operator
space $\pi S \subseteq \Hom_\D(U,V/V')$.

Take a basis $(v_1,\dots,v_s)$ of $S^{V'}$; and
lift a basis of $\pi S$ to a family $(u_1,\dots,u_r)$ in $S$, so that $(u_1,\dots,u_r,v_1,\dots,v_s)$ is a basis $\bfB_S$ of $S$.
Take an arbitrary $\F$-basis $\bfB_V$ of $V$ that is adapted to $V'$
and in which the first $d$ vectors are pairwise linearly dependent over $\D$, and consider a generic matrix $\mathbf{M}$ of $\widehat{S}$
in the bases $\bfB_S$ and $\bfB_V$. Then this matrix takes the form
$$\mathbf{M}=\begin{bmatrix}
[?]_{p \times r} & \mathbf{C} \\
\mathbf{B} & [0]
\end{bmatrix} \quad \text{where $p:=\dim_\F(V')$.}$$
Note that $\mathbf{C}$ is a generic matrix of $\widehat{S^{V'}}$, whereas
$\mathbf{B}$ is a generic matrix of $\widehat{\pi S}$.
Hence $\rk \mathbf{C}=\trk(S^{V'})=\dim_\F V'=p$. Besides $\rk \mathbf{C}+\rk \mathbf{B} \leq \rk \mathbf{M}$.

Consider the $\D$-linear subspace $L$ of $V$ spanned by the first $d$ vectors of $\bfB_V$, and the matrix
$\mathbf{M'}$ obtained by deleting the first $d$ rows of $\mathbf{M}$.
Then $\mathbf{M'}$ is a generic matrix of $\pi' S$ where $\pi' : V \twoheadrightarrow V/L$ is the standard projection.
By deleting the first $d$ rows we see that
$$\rk \mathbf{M'} \leq d(\dim_\D V'-1)+\rk \mathbf{B} \leq d(\dim_\D V'-1)+\rk \mathbf{M}-\rk \mathbf{C}=\rk \mathbf{M}-d.$$
Hence $\trk(\pi' S) \leq \trk(S)-d$. We conclude that $S$ is not weakly primitively intransitive.

The second point is deduced from the first one and from Proposition \ref{prop:strongtoweakprimitive}.
\end{proof}

\section{Nondegenerate alternators of deeply intransitive operator spaces}\label{section:bigproof}

Our aim here is to complete the proof of Theorem \ref{theo:deepintransitive}. We have already proved point (a).

Our proof mainly has two steps. The first and most difficult step consists, along with the proof of point (b) of Theorem \ref{theo:deepintransitive}, in proving
the following weak form of point (c):

\begin{prop}\label{prop:existenceofalternator}
Let $U$ and $V$ be finite-dimensional vector spaces over $\D$, with $n:=\dim V$.
Let $S$ be a deeply intransitive $\F$-linear subspace of $\Hom_\D(U,V)$ such that
$\dim S \geq \alpha(n,d)-n+\max(4-d,2)$.

Then $\Alt(S)$ is $1$-dimensional.
\end{prop}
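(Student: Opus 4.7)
The plan is to leverage the isomorphism $\Lambda : \Alt(\calS) \xrightarrow{\sim} \catch(\mathbf{M})$ constructed in Section \ref{section:genericmatrices}, where $\mathbf{M}$ denotes a generic matrix for $\widehat{\calS}$ in chosen bases of $\calS$ and $V$. Since $\max(4-d,2) \geq \max(3-d,1)$ for every $d \geq 1$, Corollary \ref{cor:sufficientconditiontargetreduced} yields that $\calS$ is target-reduced, so $\Lambda$ is available and it suffices to show $\dim_\F \catch(\mathbf{M}) = 1$. I would split this into two halves: the upper bound $\leq 1$, and the existence of a nonzero catcher.

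The upper bound is deduced by first establishing point (b) of Theorem \ref{theo:deepintransitive}, which is proved jointly with the present statement. Because $\max(4-d,2) > \max(2-d,0)$ always, our hypothesis strictly exceeds the threshold of (b), so its contrapositive forces $\trk(\calS) = nd-1$. Corollary \ref{cor:genericrank} then gives $\rk_\mathbb{L} \mathbf{M} = nd-1$, and Proposition \ref{prop:quasitransitive} yields $\dim_\F \Alt(\calS) \leq 1$. The proof of (b) is itself one of the two principal obstacles: it refines the induction used in Proposition \ref{prop:deepintransitivepointa} through a careful analysis of how much the socle of $\calS x$ can lose as $r := \trk(\calS)$ decreases. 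When $r < (n-1)d$, the socle has sufficient $\D$-codimension and induction runs smoothly; the delicate range is $(n-1)d \leq r \leq nd - 2$, where the socle can still be a full $\D$-hyperplane of $V$ and the crude induction loses too much. I would resolve this by quotienting $V$ by a suitably chosen line inside the socle and exploiting the deep intransitivity of $\calS$ on the quotient to recover the missing units of codimension.

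The existence of a nonzero catcher is the second principal obstacle. The $\mathbb{L}$-left kernel of $\mathbf{M}$ is one-dimensional by Corollary \ref{cor:genericrank}, but $1$-homogeneity of a generator is not automatic: decomposing an arbitrary polynomial left annihilator of $\mathbf{M}$ into its homogeneous components only yields a family of homogeneous annihilators all proportional over $\mathbb{L}$ but of possibly different degrees. To manufacture a $1$-homogeneous representative I would induct on $n$: pick $x \in U$ realizing $\dim_\F \calS x = nd-1$, set $\calS' := \Ker \widehat{x}$, and invoke Lemma \ref{lemma:hyperplane} to secure a $\D$-hyperplane $H$ of $V$ containing the socle of $\calS x$. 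Then $\calS'$ is a deeply intransitive subspace of $\Hom_\D(U,H)$ whose dimension falls just one unit short of the threshold required to apply the proposition to $\calS'$ in the target space $H$. Bridging this gap is the technical crux; I would attempt it by lifting a catcher of the generic matrix $\mathbf{M}'$ for $\widehat{\calS'}$ back to a catcher of $\mathbf{M}$ through the direct-sum decomposition $V = x\D \oplus H$, filling in the extra coordinates with the linear forms dictated by the polarization identity $b(x,f(y))+b(y,f(x))=0$, and checking via the Factorization Lemma that the resulting row has spanning rank strictly greater than $1$, forcing the required $1$-homogeneous form.
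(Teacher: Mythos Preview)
Your overall architecture (target-reduced $\Rightarrow$ $\Lambda$ available; upper bound via $\trk(\calS)=nd-1$ and Proposition~\ref{prop:quasitransitive}) matches the paper, but both of the places you flag as ``principal obstacles'' contain genuine gaps.

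\textbf{On point (b) of Theorem~\ref{theo:deepintransitive}.} Your proposed resolution of the delicate range $(n-1)d \leq r \leq nd-2$ by ``quotienting $V$ by a suitably chosen line inside the socle and exploiting the deep intransitivity of $\calS$ on the quotient'' does not work: deep intransitivity is a condition on the subspaces $\calS^{V'}$, and there is no reason for it to be inherited by projected spaces $\pi\calS$. The paper's argument is entirely different and does not split into cases on $r$. Instead, assuming $r\leq nd-2$, it applies the inductive hypothesis to $\calS'$ (which only needs the \emph{weaker} threshold $\alpha(n-1,d)-(n-1)+1+\varepsilon$, satisfied here) to get $\rk\mathbf{C_1}=(n-1)d-1$. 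Then any two rows $L_1\mathbf{B_1}$, $L_2\mathbf{B_1}$ are $\mathbb{L}$-dependent left-annihilators of $\mathbf{C_1}$, and the Factorization Lemma forces $(L_2-\alpha L_1)\mathbf{B_1}=0$, contradicting Claim~\ref{claim:B1targetreduced}. The induction is structured around two statements with thresholds differing by one unit precisely so that this step goes through.

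\textbf{On the construction of the catcher.} You propose to obtain a catcher of $\mathbf{C_1}$ by induction on $\calS'$, observe that the dimension falls one unit short, and then ``bridge the gap''. But you never produce the catcher you want to lift. The paper sidesteps this entirely: the Flanders--Atkinson identity $\mathbf{B_1}\mathbf{C_1}=0$ (the case $k=0$) \emph{hands you} a catcher $\mathbf{B_1}$ of $\mathbf{C_1}$ for free, so no inductive call on the alternator is needed. The real work is extending the row $\mathbf{B}=[\mathbf{B_1}\ \mathbf{B_2}]$ to a catcher $[\mathbf{B}\ -\mathbf{a}]$ of $\mathbf{M}$, and this uses two ingredients you do not mention: the higher Flanders--Atkinson identities $\mathbf{B}\mathbf{A}\mathbf{C}=0$ and $\mathbf{B}\mathbf{A}^2\mathbf{C}=0$, together with the Factorization Lemma, produce a $1$-homogeneous $\mathbf{a}$ with $\mathbf{B}\mathbf{A}-\mathbf{a}\mathbf{B}\equiv 0$ on the first $(n-1)d$ columns; and the crucial Claim~\ref{claim:compoundrankd} (that the compound matrix $\begin{bmatrix}\mathbf{A_2}\\ \mathbf{B_1}\end{bmatrix}$ has full rank $d$, proved via the substitution $x\mapsto xa^{-1}$) is what kills the remaining $d-1$ columns. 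Your polarization idea does not appear to generate the missing coordinate $\mathbf{a}$.
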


Once this result is obtained, we will directly consider operator spaces that have a $1$-dimensional alternator space,
and point (c) of Theorem \ref{theo:deepintransitive} will be obtained along with Theorem
\ref{theo:deepintransitive2} thanks to the classification of multiplicative quadratic forms
on division algebras (Theorem \ref{theo:compositionalgebras}).

\subsection{Setup}\label{section:structureirrintro}

Throughout, we let $n \geq 1$ be an integer, and $U$ and $V$ be finite-dimensional $\D$-vector spaces, with $\dim_\D V=n$.
We assume that $|\F| \geq nd$. For simplicity, we set $\varepsilon=1$ if $d=1$, and $\varepsilon=0$ otherwise.
Finally, we take a deeply intransitive $\F$-linear subspace $S$ of $\Hom_\D(U,V)$
such that
$$\dim S \geq \alpha(n,d)-n+1+\varepsilon.$$
We will prove the following two points by induction on $n$:
\begin{itemize}
\item In any case $\trk(S)=nd-1$.
\item If $\dim S \geq \alpha(n,d)-n+2+\varepsilon$ then $S$ has a nonzero alternator.
\end{itemize}
Since $n>0$, we immediately retrieve from Corollary \ref{cor:sufficientconditiontargetreduced} that $S$ is target-reduced.

Throughout, we set
$$r:=\trk(S) \in \lcro 0,nd-1\rcro.$$
Since $S$ is target-reduced we have $S \neq \{0\}$ and hence $r>0$.
We fix a vector $x \in U \setminus \{0\}$ such that $\rk \widehat{x}=r$.
Then we apply the Tangent Space Lemma to recover that all the operators in
$$S':=\{u \in S : \; u(x)=0\}$$
map into $S x$. We denote by $H$ the socle of $S x$ and obtain that
all the operators in $S'$ map into $H$ (despite the notation, at this point $H$ is not necessarily a linear hyperplane of the $\D$-vector space $V$). We set
$$s:=\dim_\F H.$$
Note that $S'$ is included in $S^H$, which is deeply intransitive, and hence
$S'$ is also a deeply intransitive $\F$-linear subspace of $\Hom_\D(U,H)$.

Now it is time to represent the dual operator space $\widehat{S}$ in appropriate bases of $S$ and $V$.
We will choose such bases carefully with respect to the vector $x$.
For the basis of $V$:
\begin{itemize}
\item We start from a basis $(e_1,\dots,e_r)$ of $S x$ in which the first $s$ vectors span $H$
(as an $\F$-linear subspace), and we extend it arbitrarily into a basis $\bfE$ of $V$.
\item We lift the first vectors $e_1,\dots,e_r$ to obtain a family $(u_1,\dots,u_r)$ of operators in $S$,
so that $u_i(x)=e_i$ for all $i \in \lcro 1,r\rcro$. Then $(u_1,\dots,u_r)$ is linearly independent and
we can choose an arbitrary basis $(u_{r+1},\dots,u_{\dim S})$ of $S'$. Then our basis of $S$ is $\bfF:=(u_1,\dots,u_{r},u_{r+1},\dots,u_{\dim S})$.
\end{itemize}
We consider the matrix space $\calM$ that represents $\widehat{S}$ in the bases
$\bfF$ and $\bfE$. The matrix in $\calM$ that corresponds to the operator $\widehat{x}$ is the reduced matrix
$$J_r=\begin{bmatrix}
I_r & [0]_{r \times (\dim S -r)} \\
[0]_{(nd-r) \times r} & [0]_{(nd-r) \times (\dim S -r)}
\end{bmatrix}.$$
Now, we take $E:=U$ as our parameter space, $R$ and $\mathbb{L}$ as the associated ring and field, and we construct a generic matrix $\bfM$ of $\calM$ as follows:
for all $(i,j)\in \lcro 1,nd\rcro \times \lcro 1,\dim S\rcro$, we define
$\mathbf{m}_{i,j}$ as the linear form on $U$ that takes every vector $z \in U$ to the $(i,j)$-entry
of the matrix of $\widehat{z}$ in the bases $\bfF$ and $\bfE$.
Hence, the specialization $\bfM(z)$ is simply the matrix of $\widehat{z}$ in the bases $\bfF$ and $\bfE$.

As we already know that all the operators in $S'$ map into $H$, the matrix $\mathbf{M}$
actually takes the refined form
$$\mathbf{M}=\begin{bmatrix}
\mathbf{A} & \mathbf{C} \\
\mathbf{B} & [0]_{(nd-r) \times (\dim S-r)}
\end{bmatrix}$$
for some matrices $\mathbf{A} \in \Mat_{r}(R)$, $\mathbf{B} \in \Mat_{nd-r,r}(R)$
and $\mathbf{C} \in \Mat_{r,\dim S-r}(R)$ with $1$-homogeneous entries, and we can further refine
$$\mathbf{C}=\begin{bmatrix}
\mathbf{C_1} \\
[0]_{(r-s) \times (\dim S-r)}
\end{bmatrix} \qquad \text{with $\mathbf{C_1} \in \Mat_{s,\dim S-r}(R)$.}$$
Accordingly, we also refine
$$\mathbf{B}=\begin{bmatrix}
\mathbf{B_1} & \mathbf{B_2}
\end{bmatrix} \qquad \text{with $\mathbf{B_1} \in \Mat_{nd-r,s}(R)$ and $\mathbf{B_2} \in \Mat_{nd-r,r-s}(R)$.}$$
Finally, we write
$$\mathbf{A}=
\begin{bmatrix}
\mathbf{A_1} & \mathbf{A_3} \\
\mathbf{A_2} & \mathbf{A_4}
\end{bmatrix} \qquad \text{with $\mathbf{A_1} \in \Mat_{s}(R)$, $\mathbf{A_2} \in \Mat_{r-s,s}(R)$ and so on.}$$
Critically, $\mathbf{C_1}$ is a generic matrix for the dual operator space $\widehat{S'} \subseteq \Hom_\F(S',H)$ in the basis
$(u_{r+1},\dots,u_{\dim S})$ of $S'$
and the basis $(e_1,\dots,e_{s})$ of the $\F$-vector space $H$. Here, the parameter space for this generic matrix remains $U$.

By the very definition of the transitive rank, the maximal rank in $\calM$ is $r$. Since both matrices $J_r$ and $\mathbf{M}$
belong to $\Vect_\mathbb{L}(\calM)$ (see the remark following Corollary \ref{cor:genericrank}), so does every linear combination of them over $\mathbb{L}$.
Hence by Lemma \ref{prop:spanrank} every linear combination of $J_r$ and $\mathbf{M}$ over $\mathbb{L}$ has rank at most $r$.
Thanks to the infiniteness of $\mathbb{L}$ we can apply the Flanders-Atkinson Lemma (Lemma \ref{lemma:FlandersAtkinson}), yielding the identity
\begin{equation}\label{eq:FLA}
\forall k \geq 0, \; \mathbf{B} \mathbf{A}^k \mathbf{C}=0.
\end{equation}
The case $k=0$ yields
$$\mathbf{B_1} \mathbf{C_1}=0.$$

Before we proceed, it is time to complete the special case $n=1$ for the proof that $\trk(S)=nd-1$.
So, assume for a moment that $n=1$.
In that case $S'=\{0\}$ and hence $u \in S \mapsto u(x) \in S x$ is an $\F$-linear isomorphism.
It follows that $r=\dim S$. The assumption that $\dim S \geq \alpha(n,d)-n+1+\varepsilon$ requires here that $\dim S \geq nd-1$
and hence $r=nd-1$.

From now on, we assume that $n \geq 2$.
We will now draw some consequences of the assumption that $\dim S \geq \alpha(n,d)-n+1+\varepsilon$.

\begin{claim}\label{claim:S'targetreduced}
Assume that $\dim S \geq \alpha(n,d)-n+2+\varepsilon$ or $r<nd-1$.
Then the space $H$ has dimension $n-1$ over $\D$, and the space $S' \subseteq \Hom_\D(U,H)$ is target-reduced.
\end{claim}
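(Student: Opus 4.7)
The plan is essentially dimension bookkeeping on top of two already-proved facts: Proposition~\ref{prop:deepintransitivepointa} (the upper bound $\dim \calS \leq \alpha(n,d)$ for deeply intransitive spaces) and the Tangent Space Lemma.

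First, I would apply the rank-nullity theorem to the evaluation $u \in \calS \mapsto u(x)$: the image is $\calS x$ of $\F$-dimension $r$ and the kernel is $\calS'$ by definition, so $\dim \calS' = \dim \calS - r$. Using the identity $\alpha(n,d) - \alpha(n-1,d) = nd - 1$ (immediate from $\alpha(k,d) = k(d-1) + d\binom{k}{2}$), a one-line computation shows that in either disjunct of the hypothesis
\[
\dim \calS' \geq \alpha(n-1,d) - (n-1) + 1 + \varepsilon :
\]
in the enhanced-dimension case one uses $r \leq nd-1$, and in the case $r \leq nd-2$ one uses only the background assumption $\dim \calS \geq \alpha(n,d) - n + 1 + \varepsilon$; both arrive at the same bound.

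Second, I would check that $\calS'$ is a deeply intransitive $\F$-linear subspace of $\Hom_\D(U,H)$. The Tangent Space Lemma already places $\calS' \subseteq \calS^H$; and $\calS^H$ is deeply intransitive in $\Hom_\D(U,H)$, because $(\calS^H)^{H'} = \calS^{H'}$ for every nonzero $\D$-linear subspace $H' \subseteq H$, which is intransitive by deep intransitivity of $\calS$. Finally, deep intransitivity is inherited by any $\F$-linear subspace of a deeply intransitive space, so it descends from $\calS^H$ to $\calS'$.

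Third, I would apply Proposition~\ref{prop:deepintransitivepointa} to $\calS' \subseteq \Hom_\D(U,H)$; the cardinality hypothesis is satisfied since $m := \dim_\D H \leq n - 1$ and $|\F| \geq nd$. This gives $\alpha(m,d) \geq \dim \calS' \geq \alpha(n-1,d) - (n-1) + 1 + \varepsilon$. If $m \leq n - 2$, the same gap formula $\alpha(n-1,d) - \alpha(n-2,d) = (n-1)d - 1$ then yields $(n-1)(d-1) \leq -\varepsilon$, which is impossible since $n \geq 2$ and $\varepsilon = 1$ exactly when $d = 1$. Combined with the obvious inequality $m \leq n - 1$ coming from $\dim_\F H \leq r \leq nd - 1$, this forces $m = n - 1$. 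For target-reducedness, I would argue by contradiction: if $\calS'$ is not target-reduced in $\Hom_\D(U,H)$, then by Remark~\ref{remark:Dtargetreduced} there is a $\D$-hyperplane $H''$ of $H$ (so $\dim_\D H'' = n - 2$) such that $\calS' \subseteq \Hom_\D(U,H'')$, still deeply intransitive; a second application of Proposition~\ref{prop:deepintransitivepointa} would give $\dim \calS' \leq \alpha(n-2,d)$, producing the same numerical contradiction.

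The only point requiring care is the arithmetic: the gap $(n-1)d - 1$ between $\alpha(n-1,d)$ and $\alpha(n-2,d)$ is exactly what is needed to rule out $\dim_\D H < n - 1$ uniformly in $d \geq 1$, and the two disjuncts in the hypothesis are precisely what is needed for the uniform lower bound on $\dim \calS'$ to hold.
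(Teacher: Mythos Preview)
Your proposal is correct and follows essentially the same approach as the paper. The only organizational difference is that the paper unifies the two failure modes (either $\dim_\D H < n-1$ or $\calS'$ not target-reduced) into a single contradiction by observing that either one yields a $\D$-linear subspace $G$ of dimension at most $n-2$ with $\calS' \subseteq \Hom_\D(U,G)$, whereas you treat the two cases separately; the arithmetic and the invocation of Proposition~\ref{prop:deepintransitivepointa} are identical.
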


\begin{proof}
Assume that $\dim_\D H<n-1$ or that $S'$ is not target-reduced.
Recall from Remark \ref{remark:Dtargetreduced} that this yields a $\D$-linear subspace $G$ of $V$ with dimension $n-2$ such that
$S' \subseteq \Hom_\D(U,G)$. Since $S'$ is deeply intransitive we deduce from point (a) of Theorem \ref{theo:deepintransitive}
that $\dim S' \leq \alpha(n-2,d)$. Hence $\dim S-r \leq \alpha(n-2,d)$.
Yet the starting assumption shows that $\dim S-r \geq \alpha(n-1,d)-n+2+\varepsilon$, and hence
$\alpha(n-2,d) \geq \alpha(n-1,d)-n+2+\varepsilon$, to the effect that $(n-1)d-1=\alpha(n-1,d)-\alpha(n-2,d) \leq n-2-\varepsilon$.
Hence $(n-1)(d-1) \leq -\varepsilon$, which contradicts the definition of $\varepsilon$ because $n \geq 2$.
\end{proof}

\begin{claim}\label{claim:B1targetreduced}
There is no row $L \in \Mat_{1,nd-r}(\F) \setminus \{0\}$ such that $L \mathbf{B_1}=0$. In particular $\mathbf{B_1} \neq 0$.
\end{claim}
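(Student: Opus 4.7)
My plan is to argue by contradiction: I will suppose that $L \mathbf{B_1} = 0$ for some nonzero row $L = (L_1, \ldots, L_{nd-r}) \in \Mat_{1, nd-r}(\F)$ and derive a contradiction from the standing hypothesis $\dim \calS \geq \alpha(n,d) - n + 1 + \varepsilon$. First I translate this polynomial identity into a statement about a linear form on $V$. Since the entries of $\mathbf{B_1}$ are $1$-homogeneous in $R$, specializing $L\mathbf{B_1}=0$ at every $z \in U$ yields $\sum_i L_i \cdot (\text{the $e_{r+i}$-coordinate of } u_j(z)) = 0$ for all $j \in \lcro 1, s\rcro$ and all $z \in U$. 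Defining the nonzero $\F$-linear form $\varphi$ on $V$ by $\varphi(e_i) = 0$ for $i \leq r$ and $\varphi(e_{r+i}) = L_i$ for $i \in \lcro 1, nd-r\rcro$, this says that $\varphi$ vanishes on $\calS x$ and satisfies $\varphi \circ u_j = 0$ for every $j \in \lcro 1, s\rcro$. Applying Lemma \ref{lemma:hyperplane} to the $\F$-hyperplane $\Ker \varphi$, its socle $G$ is a $\D$-linear hyperplane of $V$; each $\D$-subspace $u_j(U)$ for $j \leq s$ lies inside $\Ker \varphi$ and therefore inside $G$, and similarly $H \subseteq G$, so $\calS' \subseteq \Hom_\D(U,H) \subseteq \Hom_\D(U,G)$.

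Setting $\calT := \Vect_\F(u_1, \ldots, u_s)$, I then have $\calT + \calS' \subseteq \calS^G$. Since the $u_j(x) = e_j$ for $j \leq s$ are $\F$-linearly independent in $V$, the intersection $\calT \cap \calS'$ is trivial, which yields
$$\dim \calS^G \geq s + \dim \calS - r.$$
The critical observation is that $s = (n-1)d$ in every case: if $r = nd-1$ then $\calS x$ is itself an $\F$-hyperplane of $V$ and Lemma \ref{lemma:hyperplane} applied directly to $\calS x$ forces $\dim_\D H = n-1$; otherwise $r < nd-1$ and the hypothesis of Claim \ref{claim:S'targetreduced} is satisfied, delivering the same conclusion $\dim_\D H = n-1$. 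With $s = (n-1)d$ in hand, I invoke Proposition \ref{prop:deepintransitivepointa} applied to $\calS^G$, which is a deeply intransitive $\F$-subspace of $\Hom_\D(U, G)$ with $\dim_\D G = n-1$; this gives $\dim \calS^G \leq \alpha(n-1, d)$.

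Combining the two bounds with the identity $\alpha(n,d) = \alpha(n-1,d) + nd-1$, the hypothesis $\dim \calS \geq \alpha(n,d) - n + 1 + \varepsilon$, and the basic bound $r \leq nd-1$, the inequality collapses to $(n-1)(d-1) \leq -\varepsilon$. This is impossible for $n \geq 2$: if $d=1$ then $\varepsilon = 1$ but $(n-1)\cdot 0 = 0 > -1$; and if $d \geq 2$ then $(n-1)(d-1) \geq 1 > 0 = -\varepsilon$. The ``in particular $\mathbf{B_1} \neq 0$'' statement follows immediately since $nd - r \geq 1$, so a zero $\mathbf{B_1}$ would allow any nonzero row $L$ to satisfy $L \mathbf{B_1} = 0$. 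The hard part of this plan is recognising the forced equality $s = (n-1)d$: without it, the dimension inequality would not be tight enough to force a contradiction, and it is this case split combining Lemma \ref{lemma:hyperplane} with Claim \ref{claim:S'targetreduced} that makes the argument work.
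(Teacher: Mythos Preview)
Your proof is correct and follows essentially the same approach as the paper's: translate $L\mathbf{B_1}=0$ into a nonzero $\F$-linear form $\varphi$ on $V$ vanishing on $H$ and on the ranges of $u_1,\dots,u_s$, pass to the socle $G$ of $\Ker\varphi$, and bound $\dim\calS^G$ from above by $\alpha(n-1,d)$ and from below by $s+\dim\calS'$ to force the contradiction $(n-1)(d-1)\leq -\varepsilon$.

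The one point where you are actually more careful than the paper is the justification that $s=(n-1)d$. The paper's proof writes ``the operators $u_1,\dots,u_{(n-1)d}$'' without comment, implicitly using $\dim_\D H=n-1$; your case split (if $r=nd-1$ apply Lemma~\ref{lemma:hyperplane} to the $\F$-hyperplane $\calS x$; if $r<nd-1$ invoke Claim~\ref{claim:S'targetreduced}) makes this step explicit and is a genuine improvement in exposition.
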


\begin{proof}
Assume that the contrary holds. Then there is a linear hyperplane $\calH$ of the $\F$-vector space $V$ that includes $H$
and such that all the operators $u_1,\dots,u_{(n-1)d}$ map into $\calH$.
Denote by $H'$ the socle of $\calH$, and note that $H \subseteq H'$ and $H'$ is a $\D$-linear hyperplane of $V$.

The $\D$-linear mappings $u_1,\dots,u_{(n-1)d}$ map into $H'$, and so do all the elements of $S'$.
It follows that $\dim(S^{H'}) \geq \dim S'+(n-1)d$. Yet
$S^{H'}$ is deeply intransitive, so we can apply point (a) of Theorem \ref{theo:deepintransitive} once
more to find $\dim S'+(n-1)d \leq \alpha(n-1,d)$, leading to
$\dim S \leq \alpha(n-1,d)-(n-1)d+r=\alpha(n-2,d)+r-1$. A contradiction is then found like in the proof of Claim
\ref{claim:S'targetreduced}, using only $\dim S-r \geq \alpha(n-1,d)-n+1+\varepsilon$ here.
\end{proof}

\begin{claim}\label{claim:spanningrankmorethan1}
Assume that $\dim S \geq \alpha(n,d)-n+2+\varepsilon$ or $r<nd-1$.
Let $X \in \Mat_{1,nd-r}(\F) \setminus \{0\}$. Then $\sprk(X\mathbf{B_1})>1$.
\end{claim}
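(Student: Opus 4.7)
The plan is to argue by contradiction: assume some $X \in \Mat_{1,nd-r}(\F) \setminus \{0\}$ satisfies $\sprk(X\mathbf{B_1}) \leq 1$ and derive a contradiction with the dimension bound. The case $\sprk(X\mathbf{B_1})=0$ means that $X\mathbf{B_1}$ vanishes identically on $U$; since its entries are $1$-homogeneous elements of $R=S(U^\star)$, this forces $X\mathbf{B_1}=0$ as a matrix over $R$, directly contradicting Claim \ref{claim:B1targetreduced}. So the substantive case is $\sprk(X\mathbf{B_1})=1$. Here the specialization space is spanned by some nonzero $Y=(y_1,\dots,y_s)\in \Mat_{1,s}(\F)$, and because every specialization is $\F$-linear in $z \in U$, I obtain $X\mathbf{B_1}=\lambda \cdot Y$ for a unique $\lambda \in U^\star \setminus \{0\}$.

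Next, I would translate this generic-matrix identity back into the language of operators. Writing $X=(x_{r+1},\dots,x_{nd})$, let $\varphi : V \to \F$ be the $\F$-linear form whose coordinates in the dual basis of $\bfE$ are $(0,\dots,0,x_{r+1},\dots,x_{nd})$, so that $\varphi$ vanishes on $\calS x=\Vect_\F(e_1,\dots,e_r)$. Unwinding the definition of $\mathbf{B_1}$, the identity $X\mathbf{B_1}=\lambda Y$ reads $\varphi(u_j(z))=\lambda(z)\,y_j$ for every $j\in \lcro 1,s\rcro$ and every $z \in U$. Fixing $j_0$ with $y_{j_0}\neq 0$ and setting $v_j:=u_j-(y_j/y_{j_0})u_{j_0}$, I then get $\varphi \circ v_j=0$. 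Since $\F$ is central in $\D$, each $v_j$ remains $\D$-linear, hence its range, being a $\D$-submodule of $\Ker \varphi$, is contained in the socle $W_0$ of $\Ker \varphi$, which by Lemma \ref{lemma:hyperplane} is a genuine $\D$-linear hyperplane of $V$.

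The key counting step is then to bound $\dim \calS^{W_0}$ from below. On one hand, $H$ is $\D$-linear and included in $\calS x \subseteq \Ker \varphi$, so $H \subseteq W_0$, and consequently $\calS' \subseteq \calS^{W_0}$. On the other hand, the operators $v_j$ for $j \in \lcro 1,s\rcro \setminus \{j_0\}$ belong to $\calS^{W_0}$, and modulo $\calS'$ they are linearly independent because $v_j(x)=e_j-(y_j/y_{j_0})e_{j_0}$ and $(e_1,\dots,e_s)$ is linearly independent. This gives
\[
\dim \calS^{W_0} \;\geq\; \dim \calS'+(s-1)\;=\;\dim \calS - r + s - 1.
\]
Because $\calS$ is deeply intransitive, so is $\calS^{W_0}$ as a subspace of $\Hom_\D(U,W_0)$, and since $\dim_\D W_0=n-1$, Proposition \ref{prop:deepintransitivepointa} yields $\dim \calS^{W_0} \leq \alpha(n-1,d)$.

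Finally, I would invoke Claim \ref{claim:S'targetreduced}, whose hypotheses are exactly those of the present claim, to secure $s=(n-1)d$, and then combine all the inequalities to obtain $\dim \calS \leq \alpha(n-1,d)+r-(n-1)d+1$. A short numerical check, splitting according to whether $r \leq nd-2$ or whether instead the stronger dimension hypothesis $\dim \calS \geq \alpha(n,d)-n+2+\varepsilon$ is available, will contradict the running assumption $\dim \calS \geq \alpha(n,d)-n+1+\varepsilon$; the cases $d=1$ (where $\varepsilon=1$) and $d\geq 2$ (where $\varepsilon=0$) are handled uniformly using $\alpha(n,d)-\alpha(n-1,d)=nd-1$. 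The main obstacle, and the reason the argument is delicate, is ensuring that the $v_j$'s really do produce $\D$-linear operators landing in a $\D$-linear hyperplane (not merely an $\F$-linear one) and that the linear independence modulo $\calS'$ is lossless; everything else is bookkeeping once the Factorization Lemma-style rewrite $X\mathbf{B_1}=\lambda Y$ is available.
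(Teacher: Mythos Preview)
Your argument is correct, but it takes a substantially longer route than the paper's. You translate the spanning-rank-one condition back into operator language, manufacture $s-1$ operators $v_j$ landing in a $\D$-hyperplane $W_0$, and then run a dimension count against $\dim\calS^{W_0}\leq\alpha(n-1,d)$; the numerical check at the end goes through exactly as you sketch, in both cases of the disjunctive hypothesis.

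The paper's proof is much shorter because it stays on the generic-matrix side and exploits an identity you never touch: the Flanders--Atkinson relation $\mathbf{B_1}\mathbf{C_1}=0$ (the case $k=0$ of \eqref{eq:FLA}). From $X\mathbf{B_1}=\mathbf{a}R_1$ with $R_1\in\Mat_{1,(n-1)d}(\F)\setminus\{0\}$, one multiplies on the right by $\mathbf{C_1}$ to get $\mathbf{a}(R_1\mathbf{C_1})=0$, hence $R_1\mathbf{C_1}=0$; since $\mathbf{C_1}$ is a generic matrix for $\widehat{\calS'}$, this says every operator in $\calS'$ maps into the kernel of the linear form on $H$ represented by $R_1$, contradicting the target-reducedness of $\calS'$ (Claim~\ref{claim:S'targetreduced}). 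That is the whole proof. Your approach essentially extends the method of Claim~\ref{claim:B1targetreduced} from the rank-zero to the rank-one case by hand, which is self-contained and avoids the $\mathbf{B_1}\mathbf{C_1}=0$ identity, but the paper's use of that identity collapses the argument to three lines.
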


\begin{proof}
Assume that the contrary holds. By Claim \ref{claim:B1targetreduced}, we have $X \mathbf{B_1} \neq 0$,
and then $X\mathbf{B_1}=\mathbf{a} R_1$ for some row $R_1 \in \Mat_{1,(n-1)d}(\F) \setminus \{0\}$
and some $\mathbf{a} \in R \setminus \{0\}$.
The identity $\mathbf{B_1} \mathbf{C_1}=0$ then yields $\mathbf{a} (R_1 \mathbf{C_1})=0$, whence $R_1 \mathbf{C_1}=0$.

This contradicts the fact that $S'$ is target-reduced (see Claim \ref{claim:S'targetreduced}).
\end{proof}

\subsection{The transitive rank}\label{section:transitiverank}

\begin{claim}
One has $\trk(S)=nd-1$.
\end{claim}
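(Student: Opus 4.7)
The proof will be by induction on $n$, following the framework laid out in Section~\ref{section:structureirrintro}; the base case $n=1$ has been handled above, so I would assume $n \geq 2$ and suppose for contradiction that $r \leq nd-2$. The overall plan is to apply the induction hypothesis to the space $\calS' \subseteq \Hom_\D(U,H)$ in order to pin down $\rk_{\mathbb{L}}(\mathbf{C_1})$, and then to combine the identity $\mathbf{B_1}\mathbf{C_1} = 0$ with the Factorization Lemma to contradict Claim~\ref{claim:B1targetreduced}.

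The first step would be to verify that $\calS'$ fits the induction hypothesis. Since $r < nd-1$, Claim~\ref{claim:S'targetreduced} applies and tells us that $\dim_\D H = n-1$ and that $\calS' \subseteq \Hom_\D(U,H)$ is target-reduced and deeply intransitive; in particular $s = (n-1)d$. Using $\dim \calS = r + \dim \calS'$ together with the identity $\alpha(n,d) = \alpha(n-1,d) + nd - 1$, a one-line dimension count would yield
$$\dim \calS' \geq \alpha(n-1,d) - (n-1) + 1 + \varepsilon + (nd - 2 - r) \geq \alpha(n-1,d) - (n-1) + 1 + \varepsilon,$$
so the induction hypothesis applied to $\calS'$ forces $\trk(\calS') = (n-1)d - 1 = s - 1$. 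Because $\mathbf{C_1}$ is a generic matrix for $\widehat{\calS'}$ and $|\F| \geq nd > s-1$, Corollary~\ref{cor:genericrank} would then give $\rk_{\mathbb{L}}(\mathbf{C_1}) = s-1$, and consequently the left kernel of $\mathbf{C_1}$ in $\Mat_{1,s}(\mathbb{L})$ would have $\mathbb{L}$-dimension exactly~$1$.

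The identity $\mathbf{B_1}\mathbf{C_1} = 0$ (the case $k=0$ in \eqref{eq:FLA}) then forces every row of $\mathbf{B_1}$ into this $1$-dimensional left kernel. Since $nd-r \geq 2$, I could pick two rows $\mathbf{X}, \mathbf{Y}$ of $\mathbf{B_1}$; these are $1$-homogeneous row vectors that are $\mathbb{L}$-linearly dependent, and Claim~\ref{claim:spanningrankmorethan1} ensures $\sprk(\mathbf{X}) > 1$. The Factorization Lemma (Lemma~\ref{lemma:genericcollinearity}, with $\delta = 1$) would then yield $\mathbf{Y} = p\mathbf{X}$ for some $p \in \F$, producing a nonzero row $L \in \Mat_{1, nd-r}(\F)$ satisfying $L\mathbf{B_1} = 0$, in direct contradiction with Claim~\ref{claim:B1targetreduced}. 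I expect no real obstacle beyond the preparatory work already carried out: once the induction hypothesis supplies $\rk_{\mathbb{L}}(\mathbf{C_1}) = s-1$, the Factorization Lemma closes the argument cleanly.
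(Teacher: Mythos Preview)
Your proposal is correct and follows essentially the same approach as the paper's proof: assume $r\le nd-2$, use the induction hypothesis on $\calS'$ to force $\rk_{\mathbb{L}}(\mathbf{C_1})=(n-1)d-1$, and then exploit the resulting $1$-dimensional left kernel of $\mathbf{C_1}$ together with Claim~\ref{claim:spanningrankmorethan1} and the Factorization Lemma to contradict Claim~\ref{claim:B1targetreduced}. The only cosmetic differences are that the paper phrases the induction step contrapositively and works with arbitrary $\F$-linearly independent rows $L_1,L_2\in\Mat_{1,nd-r}(\F)$ rather than two distinct rows of $\mathbf{B_1}$, but these amount to the same argument.
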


\begin{proof}
We assume that $r\leq nd-2$ and seek to find a contradiction.

Denote by $r'$ the rank of $\mathbf{C_1}$: it is also the transitive rank of $S'$.
If $r'<(n-1)d-1$, then by induction $\dim S' < \alpha(n-1,d)-(n-1)+1+\varepsilon$
and hence
$$\dim S=r+\dim S' < nd-2+\alpha(n-1,d)-(n-1)+1+\varepsilon=\alpha(n,d)-n+1+\varepsilon,$$
which contradicts our starting assumption.

Hence $r'=(n-1)d-1$ (remember that $S'$ is intransitive).
Let us pick $\F$-linearly independent rows $L_1$ and $L_2$ in $\Mat_{1,nd-r}(\F)$.
Set $\mathbf{R_i}:=L_i \mathbf{B_1}$ for $i \in \{1,2\}$.
We have $\mathbf{R_i}\mathbf{C_1}=0$ with $\rk \mathbf{C_1}=(n-1)d-1$ and the number of rows of $\mathbf{C_1}$ is at most $(n-1)d$.
Hence $\mathbf{R_1}$ and $\mathbf{R_2}$ are linearly dependent over $\mathbb{L}$.
By Claim \ref{claim:spanningrankmorethan1} (remembering that we assume $r<nd-1$) we know that
$\sprk(\mathbf{R_1})>1$, and hence the Factorization Lemma yields
a scalar $\alpha \in \F$ such that $\mathbf{R_2}=\alpha \mathbf{R_1}$. Thus $(L_2-\alpha L_1) \mathbf{B_1}=0$
with $L_2-\alpha L_1 \in \Mat_{1,nd-r}(\F) \setminus \{0\}$, thereby contradicting Claim \ref{claim:B1targetreduced}.

We conclude that $r=nd-1$.
\end{proof}

\subsection{Deciphering the large spaces further}\label{section:constructalternator}

Now we proceed to prove the last point. So, now we assume that $\dim S \geq \alpha(n,d)-n+2+\varepsilon$,
and we prove that $S$ has a nonzero alternator. Note that the assumptions of Claims \ref{claim:S'targetreduced}
and \ref{claim:spanningrankmorethan1} are satisfied. In particular $S'$ is target-reduced.
Next,
$$\dim S'=\dim S-r \geq \alpha(n-1,d)-(n-1)+1+\varepsilon,$$
and hence by induction we recover $\trk(S')=(n-1)d-1$.
It then follows from Proposition \ref{prop:quasitransitive} that $\dim \Alt(S') \leq 1$.

Since $r=nd-1$, we see that $\mathbf{B_1}$ and $\mathbf{B_2}$ are simply row matrices,
and we know from Claim \ref{claim:spanningrankmorethan1} that
$\sprk(\mathbf{B_1})>1$.

Since $S'$ is target-reduced, the identity $\mathbf{B_1} \mathbf{C_1}=0$
yields that $\mathbf{B_1}$ represents a nonzero alternator $b : U \times H \rightarrow \F$ of $S'$ (seen as a subspace of $\Hom_\D(U,H)$).
We deduce that $\Alt(S')=\F b$, and hence the left radical
$$\calL:=\Lrad(b')$$
does not depend on the choice of $b'$ in $\Alt(S) \setminus \{0\}$
(in particular it is the left radical of $b$).

For all $y \in U$, the specialized matrix
$\mathbf{B_1}(y)$ represents the $\F$-linear form $b(y,-)$ in the basis $(e_1,\dots,e_{(n-1)d})$ of $H$.

Then for all $y \in U \setminus \calL$ we have $\mathbf{B_1}(y) \neq 0$, which is interpreted as follows in geometric terms:

\begin{claim}\label{claim:subtlequotient}
Let $y \in U \setminus \calL$. The mappings $u \in S$ such that $u(x) \in H$ do not all map $y$ into $S x$.
\end{claim}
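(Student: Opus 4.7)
The plan is to translate $y\notin\calL$ into the existence of an explicit operator $u\in\calT:=\{u\in\calS : u(x)\in H\}$ with $u(y)\notin \calS x$, by using the generic-matrix identification of $\mathbf{B_1}$ with the alternator $b$. Note first that the operators $u_1,\dots,u_s$ all belong to $\calT$ (since $u_i(x)=e_i\in H$ for $i\le s$), and that $\calS x=\Vect_\F(e_1,\dots,e_r)$ with $r=nd-1$, so a vector $v\in V$ lies in $\calS x$ if and only if its $e_{nd}$-coordinate in the basis $\bfE$ vanishes. The task thus reduces to producing an index $i\in\lcro 1,s\rcro$ such that the $e_{nd}$-coordinate of $u_i(y)$ is nonzero.

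Next I will read this coordinate off the generic matrix. By definition of $\mathbf{M}$, the entry $\mathbf{m}_{nd,i}$ is the linear form on $U$ sending $z$ to the $e_{nd}$-coordinate of $u_i(z)$, and for $i\le s$ this entry is precisely the $i$-th entry of $\mathbf{B_1}$. Hence the $e_{nd}$-coordinate of $u_i(y)$ equals $\mathbf{B_1}(y)_i$, so it suffices to show that the row $\mathbf{B_1}(y)$ is not zero.

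Finally I invoke the correspondence between catchers of $\mathbf{C_1}$ and alternators of $\calS'\subseteq \Hom_\D(U,H)$ built in Section \ref{section:genericmatrices}. Unwrapping the isomorphism $\Lambda:\Alt(\calS')\overset{\simeq}{\longrightarrow}\catch(\mathbf{C_1})$ with parameter space $E=U$, the catcher $\mathbf{B_1}$ corresponds to $b$ via
$$b(y,e_i)=\mathbf{B_1}(y)_i \qquad (i\in\lcro 1,s\rcro),$$
so $\mathbf{B_1}(y)=0$ is equivalent to the vanishing of $b(y,-)$ on $H$, i.e.\ to $y\in\Lrad(b)=\calL$. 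Since $y\notin\calL$ by hypothesis, some $\mathbf{B_1}(y)_i$ is nonzero, the corresponding $u_i\in\calT$ satisfies $u_i(y)\notin \calS x$, and the claim follows. The only delicate point is the explicit identification $b(y,e_i)=\mathbf{B_1}(y)_i$, but this is built directly into the construction of $\Lambda$ (the composite $\Theta\circ\widehat{(-)}=L_b$, read in the basis $(e_1,\dots,e_s)$ of $H$), so no genuine obstacle arises.
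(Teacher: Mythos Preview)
Your argument is correct and follows essentially the same route as the paper. The paper condenses the whole proof into the single observation that $\mathbf{B_1}(y)$ represents the linear form $b(y,-)$ in the basis $(e_1,\dots,e_{(n-1)d})$ of $H$, so that $\mathbf{B_1}(y)\neq 0$ for $y\notin\calL$ ``is interpreted'' as the geometric statement of the claim; you have simply unpacked that interpretation explicitly, identifying $\mathbf{B_1}(y)_i$ with the $e_{nd}$-coordinate of $u_i(y)$ and noting that the relevant operators $u_1,\dots,u_s$ lie in $\calT$.
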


We now move on to what is arguably the most subtle item in our proof:

\begin{claim}\label{claim:compoundrankd}
The compound matrix
$$\mathbf{N}:=\begin{bmatrix}
\mathbf{A_2} \\
\mathbf{B_1}
\end{bmatrix} \in \Mat_{d,(n-1)d}(R)$$
has rank $d$.
\end{claim}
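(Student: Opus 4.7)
The plan is to reformulate the rank statement as a transitivity property on a well-chosen quotient operator space, and then derive a contradiction from the hypothesis of deep intransitivity of $\calS$ together with the lower bound on $\dim \calS$.

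Geometric reformulation. Let $\pi : V \twoheadrightarrow V/H$ denote the canonical projection, which is $\D$-linear since $H$ is $\D$-linear by Claim~\ref{claim:S'targetreduced}. Set $\calS_H := \{u \in \calS : u(x) \in H\}$, an $\F$-linear subspace of $\calS$ of codimension $r - s = d - 1$. One checks that $\mathbf{N}$ is a generic matrix (in the sense of Section~\ref{section:genericmatrices}) for the operator space $\pi \calS_H \subseteq \Hom_\D(U, V/H)$: for each $y \in U$, the $j$-th column of $\mathbf{N}(y)$ encodes $\pi u_j(y)$ in the $\F$-basis $\pi(e_{s+1}), \dots, \pi(e_{nd})$ of $V/H$, for $j \in \lcro 1, s\rcro$. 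Since $|\F| \geq nd > d \geq \maxrk\,\{\mathbf{N}(y) : y \in U\}$, Lemma~\ref{prop:spanrank} yields $\rk_\mathbb{L} \mathbf{N} = \max_y \rk \mathbf{N}(y)$, so the claim is equivalent to the transitivity of $\pi \calS_H$ as an $\F$-linear subspace of $\Hom_\D(U, V/H)$.

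Assume for contradiction that $\pi \calS_H$ is intransitive, equivalently that $\rk_\mathbb{L} \mathbf{N} < d$. Decomposing any $\mathbb{L}$-linear relation among the rows of $\mathbf{N}$ into its homogeneous parts, there exist $\alpha \in R^{d-1}$ and $\beta \in R$, all $\ell$-homogeneous for some $\ell \geq 0$ and not all zero, with
\[
\alpha\,\mathbf{A_2} + \beta\,\mathbf{B_1} = 0.
\]
The first task is to upgrade this into a scalar $\F$-relation. Here one exploits $\sprk \mathbf{B_1} > 1$ (Claim~\ref{claim:spanningrankmorethan1}) together with the identities $\mathbf{B_1}\mathbf{C_1} = 0$ and the Flanders-Atkinson relation $\mathbf{B_1}\mathbf{A_1} + \mathbf{B_2}\mathbf{A_2} = \mathbf{p}\,\mathbf{B_1}$ obtained from case $k = 1$ of Lemma~\ref{lemma:FlandersAtkinson} and the one-dimensionality of the left null space of $\mathbf{C_1}$ (via the Factorization Lemma~\ref{lemma:genericcollinearity}). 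Right-multiplication of the relation by $\mathbf{C_1}$ yields $\alpha\,\mathbf{A_2}\,\mathbf{C_1} = 0$, placing $\alpha\,\mathbf{A_2}$ inside the one-dimensional left null space of $\mathbf{C_1}$, and the resulting collinearity with $\mathbf{B_1}$ is analyzed with Lemma~\ref{lemma:genericcollinearity} to force $\ell = 0$, producing a nonzero tuple $(c_1, \dots, c_{d-1}, c) \in \F^d$ with
\[
\sum_{k=1}^{d-1} c_k\,(\mathbf{A_2})_k + c\,\mathbf{B_1} = 0.
\]

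Converting the scalar relation to a hyperplane inclusion. Let $\varphi := \sum_{k} c_k\,e_{s+k}^* + c\,e_{nd}^*$, a nonzero $\F$-linear form on $V$ that vanishes on $H$. The scalar relation is equivalent to $\varphi \circ u_j = 0$ for every $j \in \lcro 1, s\rcro$; combined with the fact that the remaining basis operators $u_{r+1}, \dots, u_{\dim \calS}$ of $\calS_H$ all map into $H$, this gives $\varphi \circ u = 0$ for every $u \in \calS_H$. By Lemma~\ref{lemma:hyperplane}, the socle $H'$ of $\Ker \varphi$ is a $\D$-linear hyperplane of $V$ containing $H$, and the $\D$-linearity of the elements of $\calS_H$ forces $\im u \subseteq H'$. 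Hence $\calS_H \subseteq \calS^{H'}$, and $\calS^{H'}$ is deeply intransitive as an $\F$-subspace of $\Hom_\D(U, H')$.

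Dimension contradiction and conclusion. Proposition~\ref{prop:deepintransitivepointa} applied to $\calS^{H'}$ gives $\dim \calS^{H'} \leq \alpha(n-1, d)$, and combining with $\dim \calS_H = \dim \calS - (d-1)$ yields $\dim \calS \leq \alpha(n-1, d) + d - 1$. Together with the hypothesis $\dim \calS \geq \alpha(n, d) - n + 2 + \varepsilon$ and the identity $\alpha(n, d) - \alpha(n-1, d) = nd - 1$, this leads to $(n-1)(d-1) \leq -1 - \varepsilon$, which is impossible since $(n-1)(d-1) \geq 0$ for $n \geq 2$ and $d \geq 1$. The remaining case $d = 1$ is trivial: then $\mathbf{A_2}$ is empty, $\mathbf{N} = \mathbf{B_1} \neq 0$ by Claim~\ref{claim:B1targetreduced}, and $\rk \mathbf{N} = 1 = d$.

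The main obstacle is the reduction of a general $\ell$-homogeneous $R$-linear dependency to a scalar $\F$-relation. The Factorization Lemma combined with the chain of Flanders-Atkinson identities provides the right machinery, but the bookkeeping is delicate, and one must verify that the relation finally produced is genuinely nonzero in $\F^d$ (and not merely an artifact of a degenerate use of the collinearity). Once this degree reduction is secured, the remaining arguments are routine dimension counts against the already-established inequalities.
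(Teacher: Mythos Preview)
Your geometric reformulation is correct and your dimension contradiction at the end is clean: if one had a nonzero $\F$-linear relation $(c_1,\dots,c_{d-1},c)$ among the rows of $\mathbf{N}$, then indeed $\calS_H\subseteq\calS^{H}$ (the socle $H'$ of $\Ker\varphi$ has $\D$-dimension $n-1$ and contains $H$, hence equals $H$), and the inequality $(n-1)(d-1)\leq-1-\varepsilon$ is absurd.

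The gap is the degree reduction, and it is not a matter of bookkeeping: the sketched argument is circular. From $\alpha\,\mathbf{A_2}+\beta\,\mathbf{B_1}=0$ you right-multiply by $\mathbf{C_1}$ to obtain $\alpha\,\mathbf{A_2}\,\mathbf{C_1}=0$, but this is already immediate from $\alpha\,\mathbf{A_2}=-\beta\,\mathbf{B_1}$ together with $\mathbf{B_1}\mathbf{C_1}=0$; it carries no new information. Applying the Factorization Lemma to the collinearity of $\alpha\,\mathbf{A_2}$ with $\mathbf{B_1}$ merely recovers the multiplier $-\beta$ you already had, at the same degree $\ell$. The identity $\mathbf{B_1}\mathbf{A_1}+\mathbf{B_2}\mathbf{A_2}=\mathbf{p}\,\mathbf{B_1}$ does not help either, since $\mathbf{B_1}\mathbf{A_1}$ is not in general a scalar multiple of $\mathbf{B_1}$, so you cannot use it to cancel degrees in your relation. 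In short, $\mathbb{L}$-dependence of rows with $1$-homogeneous entries does \emph{not} imply $\F$-dependence (e.g.\ the rows $(x,0)$ and $(y,0)$ with $x,y$ independent indeterminates), and nothing in your toolbox bridges that gap here.

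The paper avoids this obstacle by working pointwise rather than generically. Fix $y\in U\setminus\calL$ and assume $\rk\mathbf{N}(y)<d$; this gives a nonzero $L\in\F^d$ and hence an $\F$-linear form $\ell$ on $V$ vanishing on $H$ with $\calS_H\,y\subseteq\Ker\ell$. The crucial observation, which has no analogue in your approach, is that every $\F$-hyperplane of $V$ containing $H$ is of the form $\calS(xa^{-1})$ for some $a\in\D^\times$: since the forms $z\mapsto\ell'(za)$, with $\ell'$ a fixed form with kernel $\calS x$, exhaust all forms vanishing on $H$. The vector $xa^{-1}$ satisfies exactly the same hypotheses as $x$ (same $\calS'$, same socle $H$), so Claim~\ref{claim:subtlequotient} applies to it and forbids $\calS_H\,y\subseteq\calS(xa^{-1})=\Ker\ell$, giving the contradiction. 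This use of the $\D$-action on the pencil of $\F$-hyperplanes through $H$ is precisely what converts a pointwise relation into the desired conclusion, and it is the idea your proposal is missing.
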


\begin{proof}
We take an arbitrary vector $y \in U \setminus \calL$ (which exists!).
We shall prove that $\rk \mathbf{N}(y) \geq d$: since $(n-1)d \geq d$ this will be enough to obtain the conclusion.

Assume on the contrary that $\rk \mathbf{N}(y)<d$. Then there is a nonzero row
$L=\begin{bmatrix}
l_1 \cdots  l_d
\end{bmatrix} \in \Mat_{1,d}(\F)$ such that $L \mathbf{N}(y)=0$.
Denote by $\ell$ the $\F$-linear form on $V$ such that $\ell(e_1)=\dots=\ell(e_{(n-1)d})=0$ and
$\ell(e_{(n-1)d+i})=l_i$ for all $i \in \lcro 1,d\rcro$. This linear form is nonzero and its kernel includes $H$.
The assumption $L \mathbf{N}(y)=0$ then shows that all the operators $u_1,\dots,u_{(n-1)d}$ map $y$ into $\Ker \ell$,
and hence all the operators of $S$ that map $x$ into $H$ also map $y$ into $\Ker \ell$.

Next, we prove that $\Ker \ell=(S x)a^{-1}$ for some $a \in \D \setminus \{0\}$. Choose indeed an $\F$-linear form $\ell'$ on $V$
with kernel $S x$. The linear forms $z\in V \mapsto \ell'(za)$, with $a \in \D$, constitute a $d$-dimensional $\F$-linear subspace of $\Hom_\F(V,\F)$, and they
all vanish on $H$. By orthogonality theory, it follows that they are exactly the $\F$-linear forms on $V$ whose kernel include $H$.
In particular there exists $a \in \D$ such that $\forall z \in V, \; \ell(z)=\ell'(za)$, and obviously $a \neq 0$.
It follows that
$$\Ker \ell=(\Ker \ell')a^{-1}=(S x)a^{-1}=S (xa^{-1}).$$
Now the trick is to use the fact that $xa^{-1}$ satisfies exactly the same assumptions as the vector $x$ we started from.
Indeed $\rk (\widehat{xa^{-1}})=\dim_\F \bigl(S (xa^{-1})\bigr)=nd-1$, the operators in $S$ that vanish at $x$ are also the ones that vanish at $x a^{-1}$,
and $H$ is the $\D$-linear hyperplane of $V$ that is included in $S (xa^{-1})$. Hence Claim \ref{claim:subtlequotient} applies
when $x$ is replaced with $xa^{-1}$. Every $u \in S$ that maps $x$ into $H$ also maps $xa^{-1}$ into $H$ because $u$ is $\D$-linear,
and hence there is at least one such operator that does not map $y$ into $S (xa^{-1})$. This contradicts the first step of the present proof.
We conclude that $\rk \mathbf{N}(y)\geq d$, which completes the proof.
\end{proof}

\subsection{Constructing a nonzero alternator of $S$}

Now everything is in place for us to produce a nonzero alternator of $S$.

We start by applying identity \eqref{eq:FLA} once more. This time around, by taking $k=1$ we find
$\mathbf{B} \mathbf{A} \mathbf{C}=0$.
Writing
$$\mathbf{B} \mathbf{A}=\begin{bmatrix}
\mathbf{B'_1} & \mathbf{B'_2}
\end{bmatrix} \quad \text{with $\mathbf{B'_1} \in \Mat_{1,(n-1)d}(R)$ and $\mathbf{B'_2} \in \Mat_{1,d-1}(R)$,}$$
we deduce that $\mathbf{B'_1}$ is a left-annihilator of $\mathbf{C}_1$.
Note that all the entries of  $\mathbf{B'_1}$ are $2$-homogeneous.
Yet $\mathbf{B_1}$ is also a nonzero left-annihilator of $\mathbf{C}_1$.
Since $\rk \mathbf{C}_1=(n-1)d-1$, we deduce that $\mathbf{B'_1}$ is a scalar multiple of $\mathbf{B_1}$ over the field $\mathbb{L}$.
Thanks to $\sprk \mathbf{B_1}>1$ (see Claim \ref{claim:spanningrankmorethan1}) we deduce from the Factorization Lemma (Lemma \ref{lemma:genericcollinearity}) that
$$\mathbf{B'_1}=\mathbf{a} \mathbf{B_1} \quad \text{for some $1$-homogeneous $\mathbf{a} \in R$.}$$
Now, we consider the new row
$\mathbf{R}:=\mathbf{B}(\mathbf{A}-\mathbf{a} I_{nd-1})=\begin{bmatrix}
[0]_{1 \times (n-1)d} & \mathbf{R_2}
\end{bmatrix}$.
We also write
$$\mathbf{R} \mathbf{A}=\begin{bmatrix}
\mathbf{R'_1} & \mathbf{R'_2}
\end{bmatrix}$$
with $\mathbf{R'_1} \in \Mat_{1,(n-1)d}(R)$ and $\mathbf{R'_2} \in \Mat_{1,d-1}(R)$, both with $3$-homogeneous entries.
Using identity \eqref{eq:FLA} with $k=1$ and $k=2$, we now obtain
$$\mathbf{R} \mathbf{A} \mathbf{C}=0.$$
Applying the Factorization Lemma once more, we obtain
$$\mathbf{R'_1}=\mathbf{p} \mathbf{B_1} \quad \text{for some $2$-homogeneous $\mathbf{p} \in R$,}$$
which yields
$$\begin{bmatrix}
\mathbf{R_2} & -\mathbf{p}
\end{bmatrix}  \begin{bmatrix}
\mathbf{A_2} \\
\mathbf{B_1}
\end{bmatrix}=0.$$
Using Claim \ref{claim:compoundrankd}, we deduce that
$\begin{bmatrix}
\mathbf{R_2} & -\mathbf{p}
\end{bmatrix}=0$. In particular $\mathbf{R_2}=0$, and we conclude that
$$\mathbf{B}(\mathbf{A}-\mathbf{a} I_{nd-1})=0.$$
By combining this with $\mathbf{B} \mathbf{C}=0$, we conclude that
$\begin{bmatrix}
\mathbf{B} & -\mathbf{a}
\end{bmatrix}$ is a (nonzero) catcher of $\mathbf{M}$.

Since it was known from the start that $S$ is target-reduced,
the process explained in the end of Section \ref{section:genericmatrices} yields a nonzero alternator $\widetilde{b} : U \times V \rightarrow \F$ of $S$, associated
with the nonzero catcher $\begin{bmatrix}
\mathbf{B} & -\mathbf{a}
\end{bmatrix}$.

Finally, since $S$ has transitive rank $nd-1$ and is target-reduced, we know from Proposition \ref{prop:quasitransitive} that
$\dim \Alt(S) \leq 1$, and we conclude that $\dim \Alt(S)=1$.

This completes the proof of Proposition \ref{prop:existenceofalternator}.

\subsection{Operator spaces with $1$-dimensional alternator spaces}

We will now complete the proof of Theorem \ref{theo:deepintransitive}, by proving point (c).

We will prove the following general result, which avoids any dimensional assumption on $S$
and focuses on the condition regarding the structure of the alternator space.

\begin{prop}\label{prop:dimalt1toquadratic}
Let $S$ be an $\F$-linear subspace of $\Hom_\D(U,V)$ such that $\Alt(S)$ has dimension $1$, and
let $b \in \Alt(S) \setminus \{0\}$.
Then:
\begin{enumerate}[(a)]
\item $\D$ has quadratic type over $\F$.
\item If we take a pair $(\sigma,e)$ that is associated with $(\D,\F)$, then there is a unique $\sigma$-sesquilinear mapping
$B : U \times V \rightarrow \D$ such that $b(x,y)=e(B(x,y))$ for all $(x,y)\in U \times V$.
\end{enumerate}
\end{prop}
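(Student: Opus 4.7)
The plan is to use the one-dimensionality of $\Alt(\calS)$ to construct a multiplicative nonisotropic quadratic form on $\D$, then invoke Theorem~\ref{theo:compositionalgebras} for point~(a), and finally construct the sesquilinear form $B$ via the duality isomorphism $\Phi_e$ from Section~\ref{section:duality} for point~(b).

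First, for each $a \in \D$, I would consider the $\F$-bilinear form $b_a : (x,y) \mapsto b(xa, ya)$. Using the $\D$-linearity of every $f \in \calS$, one sees immediately that $b_a(x, f(x)) = b(xa, f(xa)) = 0$, so $b_a \in \Alt(\calS) = \F b$, and there is a unique scalar $q(a) \in \F$ with $b(xa,ya) = q(a)\, b(x,y)$ on $U \times V$. Polarizing the standard identity $b(u, f(v)) + b(v, f(u)) = 0$, one checks similarly that $(x,y) \mapsto b(xa, ya') + b(xa', ya)$ is an alternator of $\calS$, and hence equals $\beta(a,a')\, b$ for a unique $\beta(a,a') \in \F$ that depends $\F$-bilinearly on $(a,a')$. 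Expanding $b(x(a+a'), y(a+a'))$ in two ways yields $q(a+a') = q(a) + q(a') + \beta(a,a')$, so $q$ is a quadratic form on $\D$ with polar form $\beta$. Multiplicativity is immediate from
$$q(ab)\, b(x,y) = b((xa)b, (ya)b) = q(b)\, b(xa, ya) = q(a) q(b)\, b(x,y),$$
and nonisotropy follows because if $q(a) = 0$ for some $a \neq 0$, then right-multiplication by $a \in \D^\times$ is a bijection on both $U$ and $V$, so $b(xa, ya) \equiv 0$ forces $b = 0$, a contradiction. Theorem~\ref{theo:compositionalgebras} then gives (a) and delivers the identity $q(x) = x\sigma(x)$ for the standard involution $\sigma$.

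For (b), I would fix an associated pair $(\sigma, e)$. For each $x \in U$, the $\F$-linear form $b(x,-)$ lifts uniquely via $\Phi_e$ to a $\D$-linear form on $V$, which I would denote by $B(x, -)$; this defines $B : U \times V \to \D$ with $b = e \circ B$, and $B$ is by construction $\F$-linear in $x$ and right-$\D$-linear in $y$, uniquely so (uniqueness of $B$ itself follows from the injectivity of $\Phi_e$ applied pointwise in $x$). The $\sigma$-semilinearity $B(xa, y) = \sigma(a)\, B(x,y)$ then reduces, again via the injectivity of $\Phi_e$, to the scalar identity $b(xa, y) = e(\sigma(a)\, B(x,y))$. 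Applying the polar identity with $a' = 1$ gives $b(xa, y) + b(x, ya) = \beta(a,1)\, b(x,y)$, with $\beta(a, 1) = a + \sigma(a) \in \F$ by a direct computation from $q(x) = x\sigma(x)$. Using the centrality of $\beta(a,1)$ and the identity $b(x, ya) = e(B(x,y)\, a)$ from the right-$\D$-linearity of $B$, one arrives at $b(xa, y) = e(B(x,y)\, \sigma(a))$.

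The main obstacle is then the last bridge: concluding $e(B(x,y)\,\sigma(a)) = e(\sigma(a)\, B(x,y))$, which requires the identity $e(uv) = e(vu)$ on $\D$. I would dispatch this by running through the four quadratic types: in cases (i), (ii), and (iv) the division ring $\D$ is commutative and the identity is trivial; in case (iii), $\D$ is a quaternion algebra and $e$ is its reduced trace, which is known to be a class function (and this can be verified directly from $\sigma(uv) = \sigma(v)\sigma(u)$ together with the fact that the commutator of two quaternions depends only on their pure parts and so is invariant under $\sigma$).
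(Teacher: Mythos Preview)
Your proposal is correct and follows essentially the same approach as the paper: build the multiplicative nonisotropic quadratic form $q(a)$ from $b(xa,ya)=q(a)b(x,y)$, invoke Theorem~\ref{theo:compositionalgebras}, then lift $b$ to $B$ via $\Phi_e$ and check $\sigma$-sesquilinearity using $e(uv)=e(vu)$. The only minor difference is in the final computation: the paper works directly with $q$, observing that $y\mapsto aB(xa,y)$ and $y\mapsto q(a)B(x,y)$ are both right-$\D$-linear with the same image under $\Phi_e$, hence equal, and then cancels $a$; your route via the polar value $\beta(a,1)=a+\sigma(a)$ reaches the same conclusion with one extra step.
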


\begin{proof}
The key is to consider the ``action" of $\D$ on the set $\calB(U \times V,\F)$ of all bilinear forms on $U \times V$.
For such a form $c$ and for $a \in \D$, we set
$$c^a : (x,y)\in U \times V \mapsto c(xa,ya) \in \F.$$
Then $a \mapsto [c \mapsto c^a]$ defines an $\F$-quadratic mapping $P : \D \rightarrow \End_\F(\calB(U \times V,\F))$,
with $P(aa')=P(a) \circ P(a')$ for all $(a,a')\in \D^2$, i.e., $P$ is multiplicative.
Moreover, $P(a)$ leaves $\Alt(S)$ invariant for all $a \in \D$, as for all $c \in \Alt(S)$ we observe that
$$\forall u \in S, \; \forall x \in U, \; c(xa,u(x)a)=c(xa,u(xa))=0.$$
Since $\Alt(S)$ has dimension $1$, we deduce that $b$ is an eigenvector of $P(a)$ for all $a \in \D$, and by denoting by $q(a) \in \F$ the corresponding eigenvalue
we obtain an $\F$-quadratic form $q : \D \rightarrow \F$. Obviously $\rk b^a=\rk b$ for all $a \in \D^\times$
(because $\D$ is a division ring), and in particular $q$ is nonisotropic.
Finally, since $P$ is multiplicative we obtain that $q$ is multiplicative.

Therefore, we are in the position to use Theorem \ref{theo:compositionalgebras}: the algebra
$\D$ has quadratic type over $\F$. Let us now choose a pair $(\sigma,e)$ that is associated with $(\D,\F)$.
Then we also know that $q(a)=a\sigma(a)$ for all $a \in \D$.
Moreover $e$ satisfies the identity $\forall (x,y) \in \D^2, \; e(yx)=e(xy)$, which will ease things up from now on.
As seen in Section \ref{section:basicD}, since $e$ is nonzero we recover
a unique $\F$-bilinear and right-$\D$-linear form $B$ on $U \times V$ such that
$$\forall (x,y)\in U \times V, \; b(x,y)=e(B(x,y)).$$
Let $a \in \D$. Then
$$\forall (x,y)\in U \times V, \; b^a(x,y)=e(B(xa,ya))=e(B(xa,y)a)=e(a B(xa,y))$$
and we deduce that
$$\forall (x,y)\in U \times V, \; a B(xa,y)=q(a)B(x,y)=a \sigma(a) B(x,y).$$
If $a \neq 0$ we derive that
$$\forall (x,y)\in U \times V, \; B(xa,y)=\sigma(a) B(x,y),$$
which is of course obvious if $a=0$.
Hence $B$ is $\sigma$-sesquilinear, as claimed.
\end{proof}

Now we can complete the proof of point (c) of Theorem \ref{theo:deepintransitive}.
Let $S$ be a deeply intransitive $\F$-linear subspace of
$\Hom_\D(U,V)$ with $\dim S \geq \alpha(n,d)-n+\max(4-d,2)$ and $|\F| \geq nd$.
By Proposition \ref{prop:existenceofalternator} we have $\dim \Alt(S)=1$.

Choose $b \in \Alt(S) \setminus \{0\}$. We need to prove that $b$ is right-nondegenerate.
Set $R:=\Rrad(b)$, which by point (c) of Proposition \ref{prop:dimalt1toquadratic} is a $\D$-linear subspace of $V$.
Set $t:=\dim_\D R \in \lcro 0,n-1\rcro$, and denote by $\pi : V \twoheadrightarrow V/R$ the standard projection, which is $\D$-linear.
The bilinear form $b$ induces a right-nondegenerate $\F$-bilinear form $\overline{b}$ on $U \times (V/R)$.
The space $\pi S$ is an $\F$-linear subspace of $\Hom_\D(U,V/R)$, and it is obvious that
$\overline{b}$ is an alternator of it. Since $\overline{b}$ is right-nondegenerate,
we know from Proposition \ref{prop:fromalternatortointransitive} that $\pi S$ is deeply intransitive.
Besides, the kernel of $u \in S \mapsto \pi \circ u$ is precisely $S^R$, which is deeply intransitive. Hence, by combining the rank theorem with
point (a) of Theorem \ref{theo:deepintransitive}, we find
$$\dim (S)  = \dim (\pi S)+\dim(S^R) \leq \alpha(n-t,d)+\alpha(t,d) = \alpha(n,d)-dt(n-t).$$
It $t>0$ then $dt(n-t) \geq d(n-1)$, and we observe that $d(n-1)>n-\max(4-d,2)$ because obviously
$(d-1)(n-1)\geq 0 >1-\max(4-d,2)$. This runs counter to our starting assumption on the dimension of $S$.

Therefore $t=0$, to the effect that $b$ is right-nondegenerate.
This completes the proof of point (c) in Theorem \ref{theo:deepintransitive}.

Then, by combining this very point with Proposition \ref{prop:dimalt1toquadratic},
we derive Theorem \ref{theo:deepintransitive2}.

\subsection{The converse statement}\label{section:alternator}

It remains to check that, in the case of a division algebra of quadratic type, the spaces
$\calA_{b,\D}$ truly have the claimed properties (and most critically that their dimension is the bound $\alpha(n,d)$ we have found).
Interestingly, we can prove this without any assumption on the underlying field $\F$. This requires the next lemma, which for fields
with large cardinality is a straightforward consequence of Corollary \ref{cor:sufficientconditiontargetreduced}.

\begin{lemma}\label{prop:dimalt<=1}
Let $U$ and $V$ be finite-dimensional $\D$-vector spaces, with $\dim_\D U=\dim_\D V>0$, and set $n:=\dim_\D U$.
Let $S$ be an $\F$-linear subspace of $\Hom_\D(U,V)$ such that $\dim_\F(S x)=nd-1$ for all $x \in U \setminus \{0\}$.
Assume that $S$ is target-reduced.
Then all the nonzero alternators of $S$ are nondegenerate, and $\Alt(S)$ has dimension at most $1$.
\end{lemma}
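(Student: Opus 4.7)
The plan is to exploit the strengthened transitivity hypothesis: since $\dim_\F(\calS x) = nd - 1$ for \emph{every} nonzero $x \in U$, the subspace $\calS x$ is always an $\F$-hyperplane of $V$, and this pointwise control replaces any need for a cardinality assumption on $\F$.

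Fix a nonzero alternator $b$ of $\calS$. The first step is to prove $\Lrad(b) = \{0\}$ by contradiction: suppose there is $x_0 \in \Lrad(b) \setminus \{0\}$. The polarization identity
$$b(x_0, f(y)) + b(y, f(x_0)) = 0 \qquad (y \in U,\ f \in \calS)$$
collapses to $b(y, f(x_0)) = 0$ for all $y$ and $f$, so $\calS x_0 \subseteq \Rrad(b)$. Since $b \neq 0$, $\Rrad(b)$ is a proper $\F$-subspace of $V$, hence $\dim_\F \Rrad(b) \leq nd - 1 = \dim_\F \calS x_0$, forcing $\Rrad(b) = \calS x_0$ and $\dim_\F \Rrad(b) = nd - 1$. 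I would then extend this equality to every nonzero $x \in U$: if $x \in \Lrad(b)$ the previous argument applies verbatim, while if $x \notin \Lrad(b)$ then $\Ker b(x, -)$ is an $\F$-hyperplane of $V$ containing both $\F$-hyperplanes $\calS x$ and $\Rrad(b)$, forcing all three to coincide. Summing over $x$, the target-reducedness of $\calS$ yields
$$V = \sum_{x \in U} \calS x \subseteq \Rrad(b),$$
contradicting $\Rrad(b) \subsetneq V$. Hence $\Lrad(b) = \{0\}$. Since $\dim_\F U = \dim_\F V = nd$, the left and right radicals of any bilinear form on $U \times V$ share the same $\F$-dimension, so $\Rrad(b) = \{0\}$ as well, and $b$ is nondegenerate.

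For the bound $\dim \Alt(\calS) \leq 1$, let $b$ and $b'$ be two nonzero alternators. For each nonzero $x \in U$, both $b(x, -)$ and $b'(x, -)$ are nonzero $\F$-linear forms on $V$ with the same $\F$-hyperplane kernel $\calS x$, hence $b'(x, -) = \lambda(x)\,b(x, -)$ for a unique $\lambda(x) \in \F^\times$. If $x_1, x_2 \in U$ are $\F$-linearly independent, then the injectivity of $x \mapsto b(x, -)$ (consequence of nondegeneracy) makes $b(x_1, -)$ and $b(x_2, -)$ $\F$-linearly independent, and comparing both sides of $b'(x_1 + x_2, -) = b'(x_1, -) + b'(x_2, -)$ forces $\lambda(x_1) = \lambda(x_2) = \lambda(x_1 + x_2)$. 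Target-reducedness together with $n > 0$ rules out the degenerate case $nd = 1$, so $nd \geq 2$; any two nonzero vectors of $U$ are then either $\F$-proportional (in which case $\F$-linearity in the first slot of $b$ and $b'$ yields $\lambda(cx) = \lambda(x)$ directly) or $\F$-linearly independent, so $\lambda$ is a single constant and $b' \in \F\cdot b$.

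The main hurdle is the radical step: one must realize that under $\Lrad(b) \neq \{0\}$ the equality $\calS x = \Rrad(b)$ propagates to \emph{every} nonzero $x \in U$, which is what allows target-reducedness to be invoked uniformly to derive a contradiction. Once nondegeneracy is in hand, the dimension bound on $\Alt(\calS)$ reduces to a routine proportionality-then-bilinearity computation.
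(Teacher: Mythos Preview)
Your proof is correct and follows the same two-step plan as the paper (nondegeneracy first, then $\dim\Alt(\calS)\le 1$), but both steps are argued differently.

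For nondegeneracy, the paper observes that if $b$ is nonzero and degenerate then, from $\calS x_0\subseteq\Rrad(b)$ for $x_0\in\Lrad(b)\setminus\{0\}$ and equality of radical dimensions, one gets $\rk b=1$; then each form $(x,y)\mapsto b(x,u(y))$ is alternating of rank $\le 1$, hence zero, so $\im u\subseteq\Rrad(b)$ for every $u\in\calS$, contradicting target-reducedness. You instead show directly that $\calS x=\Rrad(b)$ for \emph{every} nonzero $x$ (splitting into $x\in\Lrad(b)$ and $x\notin\Lrad(b)$) and then sum. Both work; the paper's rank-$1$ trick is a bit shorter.

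For the dimension bound, the paper is slicker: having fixed one nonzero $x$ and found $\lambda$ with $b'(x,-)=\lambda\,b(x,-)$, it notes that $b'-\lambda b$ is a \emph{degenerate} alternator (it has $x$ in its left radical), hence is zero by the first step. This avoids your verification that $\lambda(x)$ is constant across all $x$; your argument is perfectly sound but does not reuse the nondegeneracy result, which is the main economy in the paper's version.
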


\begin{proof}
Let $b \in \Alt(S)$ be nonzero and degenerate. It follows that $nd=\dim_\F V>1$.
Every element of $S$ maps the left radical $L$ of $b$ into its right radical $R$, and here
$\dim_\F L=\dim_\F R$ since $\dim_\F U=\dim_\F V$.

We choose $x \in L \setminus \{0\}$ and deduce that $\dim_\F R \geq \dim_\F S x=nd-1$. Hence $\rk(b)=1$.
Then each bilinear form $(x,y) \in U^2 \mapsto b(x,u(y))$, with $u \in S$, is alternating with rank at most $1$, and hence is zero.
Hence $\im u \subseteq R$ for all $u \in S$, which contradicts the assumption that $S$ is target-reduced.
Hence, every nonzero element of $\Alt(S)$ is nondegenerate.

Next, let $b$ and $b'$ belong to $\Alt(S) \setminus \{0\}$.
Choose $x \in U \setminus \{0\}$. By the first step, $b(x,-) \neq 0$. The nonzero linear forms $b'(x,-)$ and $b(x,-)$
vanish on the $\F$-linear hyperplane $S x$, and hence $b'(x,-)=\lambda b(x,-)$ for some $\lambda \in \F$.
Then $b'-\lambda b$ is a degenerate element in $\Alt(S)$, and by the first step $b'-\lambda b=0$.
We conclude that $\dim \Alt(S) \leq 1$.
\end{proof}

\begin{prop}\label{prop:conversequadratic}
Assume that $\D$ is of quadratic type over $\F$, and denote by $(\sigma,e)$ an associated pair.
Let $U$ and $V$ be finite-dimensional $\D$-vector spaces, with $n:=\dim_\D V>0$.
Let $B : U \times V \rightarrow \D$ be a right-nondegenerate $\sigma$-sesquilinear form, and
set $b : (x,y)\in U \times V \longmapsto e(B(x,y))$.
Then:
\begin{enumerate}[(a)]
\item $\dim \calA_{b,\D}=\alpha(n,d)$.
\item If $\dim_\D U=n$ then $\Alt(\calA_{b,\D})=\F b$ and $\dim (\calA_{b,\D} \, x)=nd-1$ for all $x \in U \setminus \{0\}$.
\item If $U=V$, then $\calA_{b,\D}$ has trivial spectrum if and only if $B$ is $e$-nonisotropic, in which case it is irreducible.
\end{enumerate}
\end{prop}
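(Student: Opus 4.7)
My strategy is to reduce everything to matrix computations via Proposition \ref{prop:skewHermitian}. Fix $\D$-bases $(u_1,\ldots,u_m)$ of $U$ and $(v_1,\ldots,v_n)$ of $V$, where $m=\dim_\D U$, and form $P=(B(u_i,v_j)) \in \Mat_{m,n}(\D)$. For $f \in \Hom_\D(U,V)$ represented by $A \in \Mat_{n,m}(\D)$, a direct sesquilinear expansion yields $B(x,f(x))=\xi^\star(PA)\xi$ for $x \leftrightarrow \xi \in \D^m$; hence by Proposition \ref{prop:skewHermitian}, $f \in \calA_{b,\D}$ if and only if $PA \in \calS\calH_m(\D)$. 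Right-nondegeneracy of $B$ is the condition that $P$ has $\D$-column rank $n$, and using left multiplication by $Q^\star$ (change of basis of $U$) and right multiplication by $Q' \in \GL_n(\D)$ (change of basis of $V$), I can normalize $P = \begin{pmatrix} I_n \\ 0 \end{pmatrix}$.

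For (a), splitting $A=(A_1 \mid A_2)$ with $A_1 \in \Mat_n(\D)$, a short block calculation with $PA = \begin{pmatrix} A_1 & A_2 \\ 0 & 0 \end{pmatrix}$ and its star-conjugate shows $PA \in \calS\calH_m(\D)$ if and only if $A_2 = 0$ and $A_1 \in \calS\calH_n(\D)$. Thus $\calA_{b,\D} \cong \calS\calH_n(\D)$ as an $\F$-vector space, and a uniform four-case check (the key point being that $\Ker e \cap \{a \in \D : \sigma(a) = -a\}$ has $\F$-dimension $d - 1$ in each of the quadratic types) gives $\dim_\F\calS\calH_n(\D) = n(d-1) + d\binom{n}{2} = \alpha(n,d)$. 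For (b), the matrix $P$ is now square and invertible, so $\calA_{b,\D}$ corresponds to $P^{-1}\calS\calH_n(\D)$. By $\GL_\D(U)$-equivariance I may assume for each fixed $x \neq 0$ that $x$ is a basis vector; then $\calA_{b,\D}\,x$ corresponds to the first-column space of $\calS\calH_n(\D)$, whose $\F$-dimension is $(d-1) + (n-1)d = nd - 1$ by the count from (a). Similarly the right-$\D$-span of all columns of $\calS\calH_n(\D)$ is $\D^n$, proving target-reducedness of $\calA_{b,\D}$. Lemma \ref{prop:dimalt<=1} then yields $\dim \Alt(\calA_{b,\D}) \leq 1$, and since $b$ itself is a nonzero alternator, $\Alt(\calA_{b,\D}) = \F b$.

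For (c), with $U = V$, polarization gives $\calA_{b,\D}\,x \subseteq \Ker b(x,-)$, and by (b) both sides have $\F$-dimension $nd - 1$, hence coincide. Consequently $x \in \calA_{b,\D}\,x$ if and only if $b(x,x) = 0$, and the trivial-spectrum property for $\calA_{b,\D}$ is equivalent to nonisotropy of $b$, i.e., to $B$ being $e$-nonisotropic. For irreducibility, assume $B$ is $e$-nonisotropic and let $W \neq 0$ be a $\calA_{b,\D}$-invariant $\D$-linear subspace; any $x \in W \setminus \{0\}$ gives $\Ker b(x,-) \subseteq W$. If $d \geq 2$, $W$ being $\D$-linear (so of $\F$-codimension divisible by $d$) and containing an $\F$-hyperplane forces $W = V$. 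If $d = 1$, either $W = V$ or $W = \Ker b(x,-)$; in the latter case, combining linearity of $y \mapsto b(y,-)$ with left-nondegeneracy of $b$ forces every $y \in W \setminus \{0\}$ to be a scalar multiple of $x$, hence $W = \F x$ with $b(x,x) = 0$, contradicting nonisotropy. I expect the main obstacle to be the uniform bookkeeping for $\dim_\F(\Ker e \cap \{a : \sigma(a) = -a\}) = d - 1$ across the four quadratic types (separable types via the reduced trace, hyper-radicial case via an arbitrary nonzero $\F$-linear form), since this single identity drives all the dimension counts in (a) and (b).
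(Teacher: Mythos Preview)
Your proposal is correct and follows essentially the same route as the paper: represent $\calA_{b,\D}$ via the Gram matrix of $B$, identify it with $\calS\calH_n(\D)$, count the dimension, and invoke Lemma~\ref{prop:dimalt<=1} for (b). The paper handles the reduction in (a) from general $m\geq n$ to $m=n$ by quotienting out the left radical of $B$ abstractly, whereas you do the equivalent thing concretely by normalising $P$ to $\begin{pmatrix} I_n\\0\end{pmatrix}$; these are the same move.

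Two small points worth tightening. First, in (b) your target-reducedness claim fails in the degenerate case $n=d=1$, where $\calS\calH_1(\F)=\{0\}$; the paper treats this case separately (there $\Alt(\{0\})$ is the $1$-dimensional space of all bilinear forms on a line, so the conclusion survives trivially). Second, your irreducibility argument in (c) works but is more laborious than necessary: the paper simply observes that for $x\neq 0$ one has $\dim_\F(\calA_{b,\D}\,x)=nd-1$ and $x\notin \calA_{b,\D}\,x$ (by trivial spectrum), hence $\F x\oplus \calA_{b,\D}\,x=V$, so any nonzero invariant $\D$-subspace contains some $x\neq 0$ and therefore all of $V$. This avoids your case split on $d$ and the left-nondegeneracy detour.
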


Note that point (a) is enough to yield Proposition \ref{prop:deepintransitiveconverse}. Indeed, in the conclusion, the deep intransitivity of
$\calA_{b,\D}$ is a straightforward consequence of point (b) in Proposition \ref{prop:fromalternatortointransitive}
(remember from Lemma \ref{lemma:relationradicals} that the right-nondegeneracy of $b$ is equivalent to the one of $B$).

\begin{proof}
Set $S:=\calA_{b,\D}$.
To start with, to prove the first point we reduce the situation to the one where $B$ is nondegenerate.
To see this, we consider the left radical $L$ of $B$, which equals the one of $b$, and the induced form $\overline{B} : (U/L) \times V \rightarrow \mathbb{D}$,
and the $\F$-bilinear form $\overline{b} : (x,y)\in (U/L) \times V \mapsto e(\overline{B}(x,y))$.
All the operators in $S$ map $L$ into the right radical of $b$, which is zero, so they all vanish on $L$.
Hence every $u \in S$ induces a $\D$-linear mapping $\overline{u} : U/L \rightarrow V$,
and we have $\overline{b}(\overline{x},\overline{u}(\overline{x}))=0$ for all $x \in U/L$. We denote by $\overline{S}$ the space of all mappings
$\overline{u}$ with $u \in S$.

Conversely, for all $u \in \calA_{\overline{b},\D}$ it is clear that we have $u \circ \pi \in S$ for the standard projection $\pi : U \twoheadrightarrow U/L$.
Therefore $\overline{S}=\calA_{\overline{b},\D}$. Since $u \in S \mapsto \overline{u} \in \overline{S}$
is clearly injective, we deduce that $\dim \calA_{b,\D}=\dim \calA_{\overline{b},\D}$.

Hence, in the remainder of the proof we need only consider the situation where $\dim_\D U=\dim_\D V=n$.
We shall use a matrix interpretation.
Let $x \in U \setminus \{0\}$, and extend it into a $\D$-basis $\bfB_U$ of $U$ with $x$ as first vector,
and consider a $\D$-basis $\bfB_V$ of $V$.

Denote by $P$ the Gram matrix of $B$ in the bases $U$ and $V$, and set $\calM=S\calH_n(\D)$.
Then the elements of $S$ are represented by the elements of $P^{-1} \calM$ in the bases $\bfB_U$ and $\bfB_V$, and in particular
$\dim \calA_{b,\D}=\dim \calM$.
Then, it is easily checked in each case that
$$\dim \calM=n(d-1)+d \dbinom{n}{2}=\alpha(n,d);$$
indeed, each diagonal entry varies independently in $\Ker e$, the strictly upper-diagonal
entries vary independently in $\D$ and the strictly lower-diagonal entries are the opposites of the conjugates of the strictly upper-diagonal ones
under $\sigma$.

Next, denote by $X$ the first vector of the standard basis of $\D^n$. Then we observe that $\calM X=(\Ker e) \times \D^{n-1}$
and hence $\dim_\F(S x)=nd-1$.
Moreover if $n \geq 2$ and we take $X'$ as the second vector of the standard basis we find $\calM X'=\D \times (\Ker e) \times \D^{n-2}$,
and it follows that $S x+S x'=V$ for some $x' \in U$.
If $d>1$ or $n>1$ we have collected enough information to see that $S$ is target-reduced, and then we conclude from
Lemma \ref{prop:dimalt<=1} that $\Alt(S)=\F b$.
If $d=1$ and $n=1$ then $S=\{0\}$ and $\Alt(S)$ has dimension $1$ (it is the space of all $\F$-bilinear forms on $U \times V$).

Assume finally that $U=V$. By definition, $B$ is $e$-nonisotropic if and only if $b$ is nonisotropic.
We already know that $\calA_{b,\D}$ has trivial spectrum if $b$ is nonisotropic.

Assume for the remainder of the proof that $\calA_{b,\D}$ has trivial spectrum.
\begin{itemize}
\item Let $x \in V \setminus \{0\}$. Then as $\dim_\F(S x)=nd-1$ and $S x$ is included in the right-orthogonal of $x$ under $b$, it equals this right-orthogonal,
which yields $b(x,x) \neq 0$ since $x \not\in S x$.
Hence $b$ is nonisotropic.
\item Let us finally prove that $S$ is irreducible. Let $x \in V \setminus \{0\}$. Combining
$\dim_\F(S x)=nd-1$ with $x \not\in S x$, we find $\F x\oplus S x=V$. Therefore, $S$ is irreducible.
\end{itemize}
\end{proof}

\section{Optimal trivial spectrum subspaces}\label{section:trivialspectrum}

In this section, we will quickly derive the main results on irreducible optimal trivial spectrum subspaces from their
analogues on deeply intransitive operator spaces.

\subsection{The Invariant Subspace Lemma}

We start with a simple lemma, which will allow us to deal with the case of non-primitively intransitive spaces.

\begin{lemma}[Invariant Subspace Lemma]\label{lemma:stab}
Let $S$ be a trivial spectrum subspace of $\End_\D(V)$.
Assume that there exists a $\D$-linear subspace $W$ of $V$ such that $S$ contains every operator $u \in \End_\D(V)$ that satisfies
$\im u \subseteq W \subseteq \Ker u$. Then $W$ is invariant under $S$.
\end{lemma}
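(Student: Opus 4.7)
The plan is to argue by contradiction: suppose some $s_0 \in \calS$ satisfies $s_0(W) \not\subseteq W$, and then exhibit a perturbation $u$ (lying in $\calS$ by hypothesis) so that $s_0+u$ has a nonzero fixed vector, violating $\F$-trivial spectrum. The operators $u \in \End_\D(V)$ with $\im u \subseteq W \subseteq \Ker u$ are naturally parametrised by $\Hom_\D(V/W,W)$, via $u = \iota \circ \bar u \circ \pi$, where $\pi : V \twoheadrightarrow V/W$ is the projection and $\iota : W \hookrightarrow V$ the inclusion. Since $\calS$ is an $\F$-linear subspace which by assumption contains every such $u$, the element $s_0+u$ belongs to $\calS$ for every $\bar u \in \Hom_\D(V/W,W)$.

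A candidate fixed vector $v$ for some $s_0+u$ must satisfy $u(v) = v - s_0(v)$; as $\im u \subseteq W$, this forces $v-s_0(v) \in W$. Conversely, if $v \notin W$ and $v-s_0(v) \in W$, then $\bar v \neq 0$ in $V/W$, so by extending $\bar v$ to a $\D$-basis of $V/W$ one can define $\bar u \in \Hom_\D(V/W,W)$ sending $\bar v$ to $v-s_0(v)$. The associated $u$ then satisfies $(s_0+u)(v) = v$, giving the required contradiction. The whole proof thus reduces to producing a single $v \in V \setminus W$ such that $v-s_0(v) \in W$.

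To find such $v$, I would apply rank--nullity to the $\D$-linear map $\tau := \pi \circ (\id_V - s_0) : V \to V/W$: its kernel has $\D$-dimension at least $\dim_\D W$, while the restriction $\tau|_W = -\pi \circ s_0|_W$ is \emph{nonzero} precisely because $s_0(W) \not\subseteq W$, so $\dim_\D(\ker \tau \cap W) < \dim_\D W$. Comparing the two inequalities yields $\ker \tau \not\subseteq W$, which produces the required $v$. The only point requiring some care is that the perturbations must be genuinely $\D$-linear (the hypothesis concerns $\End_\D(V)$, not $\End_\F(V)$); accordingly the parametrisation, the basis extension and the rank count must all be performed over $\D$, but this is the sole place where the $\F$/$\D$ distinction intervenes and presents no real obstacle.
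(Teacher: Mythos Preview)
Your proof is correct. Both you and the paper argue by contradiction and perturb the offending operator by an element with $\im u \subseteq W \subseteq \Ker u$ so as to force the eigenvalue $1$ somewhere; the difference lies in \emph{where} the fixed vector is found.

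The paper starts from a witness $x \in W$ with $s_0(x) \notin W$, chooses a $\D$-complement $W'$ of $W$ containing $s_0(x)-x$, and subtracts an operator $v$ (vanishing on $W$, with range in $W$) so that $s_0-v$ stabilises $W'$; one then checks that the class of $x$ in the quotient $V/W'$ is a nonzero fixed point of the induced endomorphism, and appeals to strong inheritance of the $\F$-trivial spectrum property to reach a contradiction. Your route is more direct: the rank--nullity argument for $\tau=\pi\circ(\id_V-s_0)$ produces a genuine fixed vector $v \in V$ for $s_0+u$, with no passage to a quotient needed. The paper's approach has the small advantage that the perturbation is written down explicitly from a chosen complement, while yours has the advantage of avoiding the (mild) appeal to inheritance under quotients and giving the fixed vector itself rather than its residue class. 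Either way the argument is short and the $\D$-linearity bookkeeping, which you flag, is the only point needing care.
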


\begin{proof}
Let $u \in S$. Assume that there is a (nonzero) vector $x \in W$ such that $u(x)\not\in W$.
The vector $u(x)-x$ is not in $W$, and hence it belongs to a direct summand $W'$ of $W$ in $V$ (i.e., a $\D$-linear subspace of $V$ such that $V=W \oplus W'$).
Denote by $\pi$ the projection of $V$ onto $W$ along $W'$, and by $v \in \End_\D(V)$ the linear mapping that
vanishes on $W$ and satisfies $v(y)=\pi(u(y))$ for all $y \in W'$.
Then $v \in S$ by our assumptions, and hence $u-v \in S$. Note that $u-v$ leaves $W'$ invariant.
Moreover $(u-v)(x) = u(x) \equiv x$ mod $W'$.
The endomorphism of $V/W'$ induced by $u-v$ then has a nonzero fixed vector, which contradicts the assumption that it should have trivial spectrum.
Therefore $W$ is invariant under $S$.
\end{proof}

\subsection{The partial structure theorem for optimal spaces}

Now, we shall complete the proof of Theorem \ref{theo:partialclassificationoptimal}.
The case $n=1$ is obvious.
So, we let $n \geq 2$ be an integer, $V$ be an $n$-dimensional $\D$-vector space, and
$S$ be an irreducible trivial spectrum subspace of $\End_\D(V)$ with dimension $\alpha(n,d)$.
We also assume that $|\F| \geq nd$.

We immediately note that $S$ is intransitive.

\vskip 3mm
\noindent \textbf{Case 1. The space $S$ is primitively intransitive.} \\
Then $S$ is deeply intransitive, by Proposition \ref{prop:primitiveintransitivitytodeep}.
\begin{itemize}
\item Assume first that $(n,d) \neq (2,1)$.
Then $n \geq \max(4-d,2)$, and hence $\dim S \geq \alpha(n,d)-n+\max(4-d,2)$.
By point (c) of Theorem \ref{theo:deepintransitive}, we find that $S$ has a nondegenerate alternator $b$.

\item Assume now that $(n,d)=(2,1)$. Then we simply note that $\dim S=1$, so we can write $S=\F u_0$ for some
operator $u_0 \in \End_\D(V)$;
then $u_0$ is invertible because of the irreducibility assumption, and hence by picking an arbitrary
symplectic form $s$ on the $\F$-vector space $V$
(which exists because $n=2$ and $d=1$), we see that $b : (x,y) \mapsto s(x,u_0^{-1}(y))$ is a nondegenerate alternator of $S$.
\end{itemize}

In any case, we have found a nondegenerate alternator $b$ of $S$, to the effect that $S \subseteq \calA_{b,\D}$.
Yet $\calA_{b,\D}$ is deeply intransitive (by point (b) of Proposition \ref{prop:fromalternatortointransitive}), and hence
$\dim \calA_{b,\D} \leq \alpha(n,d)$ by point (a) of Theorem \ref{theo:deepintransitive}. Therefore
$S=\calA_{b,\D}$.

\vskip 3mm
\noindent \textbf{Case 2. The space $S$ is not primitively intransitive.} \\
Then we take a maximal $\D$-linear subspace $V'$ of $V$ such that, for the standard projection
$\pi : V \twoheadrightarrow V/V'$, the projected space $\pi S$ is intransitive. Note that $\{0\} \subset V' \subset V$.
By the maximality of $V'$, the space $\pi S$ is primitively intransitive, and hence deeply intransitive
(see Proposition \ref{prop:primitiveintransitivitytodeep}).
Set then $t:=\dim_\D V'$.
Applying point (a) of Theorem \ref{theo:deepintransitive}, we find $\dim(\pi S) \leq \alpha(n-t,d)$.
Now, we look at the space $S^{V'}$. First of all, the rank theorem yields
$$\dim(S^{V'})=\dim (S)-\dim (\pi S)\geq \alpha(n,d)-\alpha(n-t,d)=d t(n-t)+\alpha(t,d).$$
Assigning to every $f \in S^{V'}$ its restriction $f_{V'} \in \End_\D(V')$, we create an $\F$-linear mapping
$\Phi$ whose range is a trivial spectrum subspace of $\End_\D(V')$ and whose kernel is the subspace
$\calD$ of all $f \in S^{V'}$ that vanish on $V'$.
By Theorem \ref{theo:majodimsimple} we have $\rk \Phi \leq \alpha(t,d)$, whence the rank theorem yields
$$\dim \calD \geq \dim (S^{V'})-\alpha(t,d) \geq d t(n-t).$$
Yet $\calD$ is included in the space of all $\D$-endomorphisms of $V$ that vanish on $V'$ and map into $V'$, a space
which has dimension $dt(n-t)$ over $\F$. Hence $\calD$ equals that space, to the effect that
all such endomorphisms belong to $S$.

Then the Invariant Subspace Lemma yields that $V'$ is invariant under $S$. However this contradicts the assumption that $S$
is irreducible, and we conclude that Case 2 is actually impossible.

Hence, Theorem \ref{theo:partialclassificationoptimal} is now proved.

\subsection{The full structure theorem for optimal spaces}

Now, we turn to Theorem \ref{theo:fullclassificationoptimal}.

Let $V$ be a (right) $\D$-vector space with finite dimension $n>0$. Assume that $|\F|\geq nd$,
and let $b$ be a nonisotropic $\F$-bilinear form on $V$
such that $S:=\calA_{b,\D}$ is an optimal irreducible trivial spectrum subspace of $\End_\D(V)$.

We shall prove that $\dim \Alt(S)=1$ in any case.
\begin{itemize}
\item Assume first that $n \geq \max(4-d,2)$.
Then $\dim S=\alpha(n,d) \geq \alpha(n,d)-n+\max(4-d,2)$ and the claimed statement is deduced from
point (c) of  Theorem \ref{theo:deepintransitive}.
\item Assume now that $n=2$ and $d=1$.
Then $\dim S=1$, and we take $u_0 \in S \setminus \{0\}$. Since $S$ is irreducible, $u_0$ must be invertible,
and we deduce that $\dim S x=1=nd-1$ for all $x \in V \setminus \{0\}$. Moreover $S$ is target-reduced because $u_0$ is surjective.
Hence Lemma \ref{prop:dimalt<=1} shows that $\dim \Alt(S) \leq 1$, and we conclude that $\dim \Alt(S)=1$ because $b \in \Alt(S)$.
\item If $n=1$ and $d>1$ then $\dim_\F (S x)=\dim S=nd-1$ for all $x \in V \setminus \{0\}$, and obviously $S$ is target-reduced,
so once more we find by Proposition \ref{prop:dimalt<=1} that $\dim \Alt(S)=1$.
\item If $n=1$ and $d=1$ then $S=\{0\}$ and $\Alt(S)$ is the space of all $\F$-bilinear forms on $V$, which has dimension $1$.
\end{itemize}
Then, we apply Proposition \ref{prop:dimalt1toquadratic} and find that $\D$ has quadratic type over $\F$ and that,
for an associated pair $(\sigma,e)$, there exists a $\sigma$-sesquilinear form $B : V^2 \rightarrow \D$
such that $b : (x,y) \mapsto e(B(x,y))$. Besides, it has already been observed in Section \ref{section:basicD} that $B$ is uniquely determined by $b$.

Finally, the converse statement in Theorem \ref{theo:fullclassificationoptimal} has already been proved through Proposition \ref{prop:conversequadratic}.

\subsection{The quadratic type case}

We finish this section by proving the results that are specific to the division algebras of quadratic type: Theorems \ref{theo:partialclassificationoptimalwith1}
and \ref{theo:classuptosimilarity}.

So, assume that $\D$ has quadratic type over $\F$, and denote by $(\sigma,e)$ an associated pair.
Let $V$ be an $n$-dimensional vector space over $\D$, with $n \geq 1$.
Let $S$ be an irreducible optimal trivial spectrum subspace of $\End_\D(V)$ such that $|\F| \geq nd$.

If $n \geq 2$ then we apply Theorems \ref{theo:partialclassificationoptimal} and \ref{theo:fullclassificationoptimal} to find a $\sigma$-sesquilinear nondegenerate form
$B : V^2\rightarrow \D$ such that $S=\calA_{b,\D}$ for $b : (x,y) \mapsto e(B(x,y))$.

Assume now that $n=1$. In that case we need only consider the special case where $V=\D$.
By the isomorphism $a \in \D \mapsto [x \mapsto ax] \in \End_\D(\D)$ of $\F$-vector spaces, we associate to $S$ an $\F$-linear hyperplane $H$ of $\D$.
Thus $H$ is the kernel of $x \mapsto e(\alpha x)$ for some $\alpha \in \D^\times$. Then, for all $a \in H$, we have
$$\forall x \in \D, \; e(x^\sigma \alpha (ax))=e(\alpha a \underbrace{xx^\sigma}_{\in \F})=(xx^\sigma)\, e(\alpha a)=0$$
This shows that the nondegenerate bilinear form $b : (x,y) \in \D^2 \mapsto e(x^\sigma \alpha y)$
is an alternator of $S$. Hence $S \subseteq \calA_{b,\D}$. Yet $\dim \calA_{b,\D} \leq d-1$ because $\calA_{b,\D}$ is deeply intransitive,
and we conclude that $S=\calA_{b,\D}$. Note here that $b : (x,y) \mapsto e(B(x,y))$ for the $\sigma$-sesquilinear form
$B : (x,y) \mapsto x^\sigma \alpha y$. By point (c) of Proposition \ref{prop:conversequadratic}, we see that $b$ is nonisotropic.

From here, we deduce from point (b) of Proposition \ref{prop:conversequadratic} that
$\Alt(S)$ has dimension $1$: it follows that for every nondegenerate bilinear form $b'$ on $V$ such that $S=\calA_{b',\D}$,
we have $b' \in \Alt(S)$ and hence $b' \in \F b$.
This completes the proof of Theorem \ref{theo:partialclassificationoptimalwith1}.

Let us finally derive Theorem \ref{theo:classuptosimilarity}.
So, let $B$ and $B'$ be nondegenerate $\sigma$-sesquilinear forms on $V$. Let $\varphi \in \GL_\D(V)$ and set
$b : (x,y) \in V^2 \mapsto e(B(x,y))$ and $b' : (x,y) \in V^2 \mapsto e(B'(x,y))$.

Then, for all $u \in \End_\D(V)$,
\begin{align*}
\varphi \circ u \circ \varphi^{-1} \in \calA_b & \Leftrightarrow \forall x \in V, \; e(B(x,\varphi(u(\varphi^{-1}(x)))=0 \\
& \Leftrightarrow \forall y \in V, \; e(B(\varphi(y),\varphi(u(y)))=0 \\
& \Leftrightarrow u \in \calA_{b''}
\end{align*}
where $b'' : (x,y) \mapsto e(B''(x,y))$ for the $\sigma$-sesquilinear form $B'' : (x,y) \mapsto B(\varphi(x),\varphi(y))$.
Hence $\calA_{b'',\D}$ is the conjugate of $\calA_{b,\D}$ under $\varphi^{-1}$, and more generally
$\calA_{\alpha b'',\D}=\calA_{b'',\D}$ is similar to $\calA_{b,\D}$ for all $\alpha \in \F^\times$.

Conversely, assume that $\calA_{b,\D}$ is similar to $\calA_{b' ,\D}$ under an element of $\GL_\D(V)$.
Then by the above $\calA_{b',\D}=\calA_{b'',\D}$ for some $b''$ of the form $(x,y) \mapsto e(B''(x,y))$, where $B''$ is equivalent to $B$.
The uniqueness statement in Theorem \ref{theo:partialclassificationoptimalwith1} yields $b'=\alpha b''$ for some $\alpha \in \F^\times$,
and hence $B'=\alpha B''$. Hence $B'$ is equivalent to $\alpha B$ for some $\alpha \in \F^\times$.
This completes the proof of Theorem \ref{theo:classuptosimilarity}.

\section{Optimal affine spaces of units}\label{section:affinenonsingular}

In this final section, we solve our initial problem of unital affine subspaces of a simple algebra.

\subsection{The greatest possible dimension}

To start with, we have a straightforward application of Theorem \ref{theo:dimMax}:

\begin{theo}
Let $\D$ be a division ring with finite dimension $d$ over a central subfield $\F$, and let $n \geq 1$ be such that 
$|\F| \geq nd$. Then the greatest possible dimension for a unital $\F$-affine subspace of $\Mat_n(\D)$ is $n(d-1)+d \binom{n}{2}$.
\end{theo}

\subsection{The optimal spaces}

For the structure of optimal spaces, we will split the discussion, whether $\D$ has quadratic type over $\F$ or not.

\begin{theo}
Let $n \geq 1$. Assume that $\D$ is not of quadratic type over $\F$, and that $|\F| \geq nd$. Then, up to equivalence, there is exactly one
optimal unital $\F$-affine subspace of $\Mat_n(\D)$.
\end{theo}

One such space is the $n$-fold joint $(1_\D+H) \vee \cdots \vee (1_\D+H)$, where $H$ is an arbitrary $\F$-linear hyperplane of $\D$ that does not contain $1_\D$.

\begin{proof}
First of all it is clear that the $n$-fold joint $(1_\D+H) \vee \cdots \vee (1_\D+H)$ is an $\F$-affine subspace of invertible matrices of $\Mat_n(\D)$,
with dimension $n(d-1)+d\dbinom{n}{2}$.

Conversely, let $\calM$ be an optimal $\F$-affine subspace of invertible matrices of $\Mat_n(\D)$.
Replacing $\calM$ with an equivalent space if necessary, we can assume that $I_n \in \calM$.
Then the translation vector space $\overrightarrow{\calM}$ of $\calM$ is a trivial spectrum subspace of $\Mat_n(\D)$,
and an optimal one. By Theorem \ref{theo:classifnonquadratic}, it is similar to $H_1 \vee \cdots \vee H_n$ for some list $(H_1,\dots,H_n)$ of
$\F$-linear hyperplanes of $\D$, none of which contains $1_\D$.
Consequently $\calM \sim I_n+(H_1 \vee \cdots \vee H_n)=(1_\D+H_1) \vee \cdots \vee (1_\D+H_n)$.
To conclude, we note that any two $\F$-affine hyperplanes of $\D$ that do not go through $0_\D$ are equivalent.
Indeed, let $\calH$ and $\calH'$ be such hyperplanes. They have respective equations of the form $e(x)=1_\F$ and $f(x)=1_\F$, where
$e$ and $f$ are nonzero $\F$-linear forms on $\D$.
Then, as seen in Section \ref{section:duality}, there exists $a \in \D \setminus \{0\}$ such that $f : x \mapsto e(ax)$.
This yields $\calH'=a^{-1} \calH$, to the effect that $\calH \sim \calH'$.

Finally, $n$-fold joints are clearly compatible with equivalence, to the effect that for any list $\calH_1,\dots,\calH_n$ of
$\F$-affine hyperplanes of $\D$ that do not contain $0_\D$, we have
$\calM \sim \calH_1 \vee \cdots \vee \calH_n$. The claimed result ensues.
\end{proof}

The situation is both more complicated and more interesting when $\D$ has quadratic type over $\F$.

\begin{theo}[Classification of optimal $\F$-affine subspaces of invertible matrices]\label{theo:classaffine}
Let $n \geq 1$. Assume that $\D$ has quadratic type over $\F$, with $|\F| \geq nd$. Let $(\sigma,e)$ be an associated pair.

Let $\calM$ be an optimal unital $\F$-affine subspace of $\Mat_n(\D)$.
Then there exists a partition $n=n_1+\cdots+n_p$ and a list $(P_1,\dots,P_p)$ of $e$-nonisotropic matrices, with $P_i \in \GL_{n_i}(\D)$, such that
$$\calM \simeq (P_1+\calS\calH_{n_1}(\D) \vee \cdots \vee (P_p+\calS\calH_{n_p}(\D)).$$

Conversely, let $n=n_1+\cdots+n_p$ be a partition, and $(P_1,\dots,P_p)$ be a list of $e$-nonisotropic matrices, with $P_i \in \GL_{n_i}(\D)$.
Then $(P_1+\calS\calH_{n_1}(\D)) \vee \cdots \vee (P_p + \calS\calH_{n_p}(\D))$ is an optimal unital $\F$-affine subspace of $\Mat_n(\D)$.

Finally, let $m=m_1+\cdots+m_q$ be a partition, and $(P'_1,\dots,P'_q)$ be a list of $e$-nonisotropic matrices, with $P'_i \in \GL_{m_i}(\D)$.
Then $(P_1+\calS\calH_{n_1}(\D)) \vee \cdots \vee (P_p +\calS\calH_{n_p}(\D))$ is equivalent to
$(P'_1+\calS\calH_{m_1}(\D)) \vee \cdots \vee (P'_q + \calS\calH_{m_q}(\D))$ if and only if
$(n_1,\dots,n_p)=(m_1,\dots,m_q)$ and, for all $k \in \lcro 1,p\rcro$,
there exists an invertible matrix $Q_k \in \GL_{n_k}(\D)$
and a scalar $\alpha \in \F^\times$ such that
$\forall X\in \D^{n_k}, \;  e(X^\star P'_k X)=\alpha\,e((Q_kX)^\star P_k (Q_kX))$.
\end{theo}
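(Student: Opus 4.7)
The plan is to reduce the classification to Theorem \ref{theo:coro4} on optimal $\F$-trivial spectrum subspaces. For the structural decomposition, I would pick any $A \in \calM$ and observe that $\calM$ is equivalent to the translated space $A^{-1}\calM = I_n + A^{-1}\overrightarrow{\calM}$, which contains $I_n$. Since $I_n \in A^{-1}\calM \subseteq \GL_n(\D)$, the translation part $\calT := A^{-1}\overrightarrow{\calM}$ is an $\F$-trivial spectrum subspace, of dimension $\alpha(n,d)$ by Theorem \ref{theo:dimmaxaffine} applied with $r = n$, hence optimal. Theorem \ref{theo:coro4} supplies $R \in \GL_n(\D)$, a partition $n = n_1 + \cdots + n_p$, and $e$-nonisotropic $P_k \in \GL_{n_k}(\D)$ such that $R^{-1}\calT R = P_1^{-1}\calS\calH_{n_1}(\D) \vee \cdots \vee P_p^{-1}\calS\calH_{n_p}(\D) =: \calJ$. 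Conjugating $A^{-1}\calM$ by $R$ gives $I_n + \calJ$, and left-multiplication by the block-diagonal matrix $P_{\mathrm{block}} := \Diag(P_1,\ldots,P_p)$ converts this into $(P_1 + \calS\calH_{n_1}(\D)) \vee \cdots \vee (P_p + \calS\calH_{n_p}(\D))$ by a direct block-by-block computation.

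For the converse, every element of $\calN := (P_1+\calS\calH_{n_1}(\D)) \vee \cdots \vee (P_p+\calS\calH_{n_p}(\D))$ is block upper-triangular with $(k,k)$-block $P_k + S_k = P_k(I_{n_k} + P_k^{-1}S_k)$, which is invertible because $P_k^{-1}\calS\calH_{n_k}(\D)$ is $\F$-trivial spectrum by Proposition \ref{prop:conversequadratic}. The identity $\sum_k \alpha(n_k,d) + d\sum_{k<\ell} n_k n_\ell = \alpha(n,d)$ of Section \ref{section:introgeneralnonsense} then yields $\dim_\F \calN = \alpha(n,d)$, confirming optimality.

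For the sufficiency of the equivalence condition, given $\alpha \in \F^\times$ and a family $(Q_k)_k$ satisfying the quadratic form relation, I would take $V := \Diag(Q_1,\ldots,Q_p)$ and $U := \alpha\,\Diag(Q_1^\star,\ldots,Q_p^\star)$. Proposition \ref{prop:skewHermitian} combined with the hypothesis yields $\alpha Q_k^\star P_k Q_k - P'_k \in \calS\calH_{n_k}(\D)$ for each $k$, and the same proposition shows $Q_k^\star \calS\calH_{n_k}(\D) Q_k = \calS\calH_{n_k}(\D)$; together these give $U\calN'V = \calN''$ by a block-by-block computation, the off-diagonals remaining free because $\alpha Q_i^\star$ and $Q_j$ are invertible.

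The main obstacle is the necessity of the condition. Since the similarity class of the trivial spectrum subspace $A^{-1}\overrightarrow{\calM}$ depends in general on the choice of base point $A \in \calM$, one cannot directly compare the two canonical decompositions by applying the uniqueness clause of Theorem \ref{theo:coro4} to the standard base points $\Diag(P_k)$ and $\Diag(P'_k)$. My plan is to bypass this by identifying two base-point-independent invariants of $\calM$ that transform predictably under equivalence. First, the partition $(n_1,\ldots,n_p)$ is the dimension sequence of the unique flag of $\calM$-invariant $\D$-linear subspaces of $\D^n$: the block upper-triangular structure of the canonical form forces any invariant subspace to be sandwiched between consecutive steps of the flag (by freely varying the off-diagonal blocks of the translation space), and the irreducibility of each $P_k^{-1}\calS\calH_{n_k}(\D)$ as an optimal $\F$-trivial spectrum subspace (via Theorem \ref{theo:coro4}) rules out strict intermediate subspaces on each graded piece. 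Second, the quadratic forms $q_k : X \mapsto e(X^\star P_k X)$ on the graded pieces are invariants modulo the $\F^\times$-scaling ambiguity, because shifting the base point within $\calM$ alters each $P_k$ only by an element of $\calS\calH_{n_k}(\D)$, which lies in the kernel of $q_k$ by Proposition \ref{prop:skewHermitian}. Under the equivalence $\calN'' = U\calN'V$, the source and target flag dimensions must match (giving $n_k = m_k$) and the induced change of basis on each graded piece, combined with the overall $\F^\times$-scaling freedom, yields the stated quadratic form relation.
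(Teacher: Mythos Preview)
Your structural decomposition and converse arguments are correct and essentially match the paper's. The sufficiency direction of the equivalence criterion is also fine (note only that the theorem allows a different $\alpha_k$ for each block, so your $U$ should be $\Diag(\alpha_1 Q_1^\star,\dots,\alpha_p Q_p^\star)$).

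The necessity argument, however, has two gaps.

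First, the notion ``$\calM$-invariant subspace'' (meaning $MW\subseteq W$ for all $M\in\calM$) is a similarity invariant, not an equivalence invariant: if $\calN''=U\calN'V$ and $\calN'W=W$, you only get $\calN''(V^{-1}W)=UW$, not that $V^{-1}W$ or $UW$ is $\calN''$-invariant. The paper avoids this by working with \emph{two} flags: the target flag consisting of the spaces $\Vect_\F(\calS x)$ as $x$ ranges over $V$ (using Lemma~\ref{lemma:supertransitivity} to see these are exactly the $V_i$), and the source flag $U_i=\{x:\calS x\subseteq V_i\}$. These transform under $(U,V)$ by $V_i\mapsto UV_i$ and $U_i\mapsto V^{-1}U_i$, which is what forces $U$ and $V$ to be block upper-triangular.

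Second, and more importantly, once you reduce to the block-by-block equivalence $P_k+\calS\calH_{n_k}(\D)=R_k\bigl(P'_k+\calS\calH_{n_k}(\D)\bigr)S_k$, you assert without argument that this yields the quadratic-form relation with a single matrix $Q_k$ and a scalar $\alpha_k$. But comparing translation spaces only gives $R_k\,\calS\calH_{n_k}(\D)\,S_k=\calS\calH_{n_k}(\D)$, and the whole content lies in showing that this forces $S_k^\star R_k^{-1}=\alpha_k I_{n_k}$ for some $\alpha_k\in\F^\times$. The paper obtains this from the uniqueness of the alternator (Proposition~\ref{prop:dimalt<=1}): both $(X,Y)\mapsto e(X^\star Y)$ and $(X,Y)\mapsto e(X^\star S_k^\star R_k^{-1}Y)$ are nondegenerate alternators of $\calS\calH_{n_k}(\D)$, hence proportional. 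This is precisely the source of the $\F^\times$-scaling; your justification (``shifting the base point'') only shows $q_k$ is independent of the base point for a \emph{fixed} canonical form, producing no scaling ambiguity at all.
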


The proof involves the following lemma:

\begin{lemma}\label{lemma:supertransitivity}
Let $S$ be an irreducible optimal trivial spectrum subspace of $\End_\D(V)$, where $V$ is an $n$-dimensional vector space over $\D$,
with $|\F| \geq nd$. Then $\F x+S x=V$ for all $x \in V \setminus \{0\}$.
\end{lemma}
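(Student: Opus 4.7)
The plan is to split into two cases depending on whether $n=1$ or $n\geq 2$, and in each case to show that $\dim_\F(\calS x) = nd-1$ for every $x \in V \setminus \{0\}$; combined with the $\F$-trivial spectrum property (which forces $x \notin \calS x$), this will immediately yield $\F x + \calS x = V$ since $\dim_\F V = nd$.

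For the base case $n = 1$, I would identify $V$ with $\D$ and $\End_\D(V)$ with $\D$ acting by left multiplication, so that $\calS$ corresponds to an $\F$-hyperplane of $\D$ that does not contain $1_\D$ (by point (i) of Theorem \ref{theo:partialclassificationoptimal}). For $x \in V \setminus \{0\}$ the evaluation $u \mapsto u(x)$ is $\F$-linear from $\calS$ to $V$ and is injective, since if $u \in \End_\D(V)$ satisfies $u(x) = 0$ then $u$ vanishes on $x\D = V$. Hence $\dim_\F(\calS x) = \dim_\F \calS = d-1$, and the conclusion follows.

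For the main case $n \geq 2$, I would invoke Theorem \ref{theo:partialclassificationoptimal} to write $\calS = \calA_{b,\D}$ for some nonisotropic $\F$-bilinear form $b$ on $V$. Since $\calS$ is optimal, Theorem \ref{theo:fullclassificationoptimal} applies and tells us that $\D$ is of quadratic type over $\F$ and that, for any associated pair $(\sigma,e)$, there is a $\sigma$-sesquilinear form $B$ on $V$ with $b = e \circ B$; the form $B$ is then $e$-nonisotropic because $b$ is nonisotropic. Now I can apply point (b) of Proposition \ref{prop:conversequadratic} (with $U = V$), which yields precisely $\dim_\F(\calS x) = nd-1$ for every $x \in V \setminus \{0\}$. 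Since $\calS$ has $\F$-trivial spectrum we have $x \notin \calS x$, and combining these two facts gives $\F x \oplus \calS x = V$.

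There is no serious obstacle: the hard work has already been done in Theorems \ref{theo:partialclassificationoptimal} and \ref{theo:fullclassificationoptimal} and in Proposition \ref{prop:conversequadratic}, and the lemma is essentially a corollary that packages the existing information $\dim_\F(\calS x) = nd-1$ together with the trivial-spectrum property. The only mild care required is to make sure, in the $n \geq 2$ case, that we properly chain the three preceding results to land in the hypotheses of Proposition \ref{prop:conversequadratic}(b), and to treat the $n=1$ case separately since Proposition \ref{prop:conversequadratic} was proved under the assumption that $\D$ is of quadratic type, whereas for $n=1$ this assumption is not available in the statement of the present lemma.
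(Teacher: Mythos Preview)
Your proof is correct and takes essentially the same approach as the paper: reduce to the identity $\dim_\F(\calS x)=nd-1$ via the structure theorems and Proposition~\ref{prop:conversequadratic}(b), then conclude from $x\notin\calS x$. The only difference is cosmetic: the paper invokes Theorem~\ref{theo:partialclassificationoptimalwith1} (which already folds the $n=1$ case into the $\calA_{b,\D}$ description when $\D$ has quadratic type) together with Theorem~\ref{theo:fullclassificationoptimal}, whereas you split off $n=1$ explicitly and argue it directly; your version has the minor advantage of not tacitly assuming quadratic type in the $n=1$ case.
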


\begin{proof}
We combine Theorems \ref{theo:fullclassificationoptimal} and \ref{theo:partialclassificationoptimalwith1} to find that
$S=\calA_{b,\D}$ for some nondegenerate $\F$-bilinear form $b$ of type $(x,y) \mapsto e(B(x,y))$, where $B$ is a nondegenerate $\sigma$-sesquilinear form on $V$.
Let $x \in V \setminus \{0\}$.
By point (b) of Proposition \ref{prop:conversequadratic} we find $\dim_\F (S x)=nd-1$. The conclusion is obtained by noting that $x \not\in S x$ because $S$ has trivial spectrum.
\end{proof}

\begin{proof}[Proof of Theorem \ref{theo:classaffine}]
To simplify the writing of the proof, we set $\calM_k:=\calS\calH_k(\D)$ for every integer $k \geq 1$.

Again, we lose no generality in assuming that $I_n \in \calM$. The translation vector space $\overrightarrow{\calM}$ of $\calM$
is then an optimal trivial spectrum subspace of $\Mat_n(\F)$.
This yields a partition $n=n_1+\cdots+n_p$ and a list $(P_1,\dots,P_p)$ of $e$-nonisotropic matrices, with $P_i \in \GL_{n_i}(\D)$, such that
$$\overrightarrow{\calM} \simeq (P_1^{-1}\calM_{n_1}) \vee \cdots \vee (P_p^{-1}\calM_{n_p}).$$
Hence
$$\calM \simeq I_n+\bigl((P_1^{-1}\calM_{n_1}) \vee \cdots \vee (P_p^{-1}\calM_{n_p})\bigr)
=(I_{n_1}+P_1^{-1}\calM_{n_1})\vee \cdots \vee (I_{n_p}+P_p^{-1}\calM_{n_p}),$$
leading to
$$\calM \sim (P_1+\calM_{n_1})\vee \cdots \vee (P_p+\calM_{n_p}).$$

The second statement is obvious because, for every $k \in \lcro 1,p\rcro$ and every $e$-nonisotropic $P \in \GL_k(\D)$
we have $P+\calM_k \sim I_k+P^{-1} \calM_k$, and $P^{-1} \calM_k$ has trivial spectrum and dimension $\alpha(k,d)$ over $\F$;
note also, for the remainder of the proof, that $\Vect_\F(P+\calM_k)X=\D^k$ for all $X \in \D^k \setminus \{0\}$, as a consequence of
Lemma \ref{lemma:supertransitivity}.

It remains to prove the uniqueness statement. It is better to think in geometrical terms here. So, let $V$ be an $n$-dimensional vector space over
$\D$, and let $S$ be an optimal $\F$-affine subspace of automorphisms of $V$.
Assume that we have bases $(e_1,\dots,e_n)$ and $(f_1,\dots,f_n)$ of $V$ such that the matrices that represent the elements of $S$ form the space
$(P_1+\calM_{n_1})\vee \cdots \vee (P_p+\calM_{n_p})$. We consider the integers $N_i:=\sum_{k=1}^i n_k$ and
the spaces $U_i:=\Vect(e_1,\dots,e_{N_i})$ and $V_i:=\Vect(f_1,\dots,f_{N_i})$ for $i \in \lcro 0,p\rcro$.
We shall prove that any other choice of bases will yield the same spaces.
To see this, we look at the dimension $\dim_\F(S x)$ when $x$ ranges over $V$.
Using the previous remark, we see that for all $i \in \lcro 1,p \rcro$ and all $x \in U_i \setminus U_{i-1}$,
we have $\Vect_\F(S x)=V_i$.
Hence the spaces $V_0,\dots,V_p$ are uniquely determined by $S$ only, as the possible $S x$ spaces when $x$ ranges over $V$;
and then $U_i=\{x \in V : \; S x \subseteq V_i\}$ is also uniquely determined by $S$ for all $i \in \lcro 1,p\rcro$.

As a consequence, if we have invertible matrices $R,S$ in $\GL_n(\D)$ such that
$$(P_1+\calM_{n_1}) \vee \cdots \vee (P_p + \calM_{n_p})=R \left((P'_1+\calM_{n'_1}) \vee \cdots \vee (P'_q + \calM_{n'_q})\right) S$$
then $(n_1,\dots,n_p)=(n'_1,\dots,n'_q)$ and both matrices $R$ and $S$ are block upper-triangular with respect to the partition
$(n_1,\dots,n_p)$. Their respective diagonal blocks are invertible, and it follows that
$$\forall i \in \lcro 1,p\rcro, \; P_i+\calM_{n_i} \sim P'_i+\calM_{n_i}.$$
Conversely, it is obvious that if $(n_1,\dots,n_p)=(n'_1,\dots,n'_q)$ and $\forall i \in \lcro 1,p\rcro, \; P_i+\calM_{n_i} \sim P'_i+\calM_{n_i}$
then $(P_1+\calM_{n_1}) \vee \cdots \vee (P_p + \calM_{n_p}) \sim (P'_1+\calM_{n'_1}) \vee \cdots \vee (P'_q + \calM_{n'_q})$.

In order to conclude, we go back to an arbitrary integer $k \in \lcro 1,n\rcro$, we take two $e$-nonisotropic matrices
$P$ and $Q$ of $\Mat_k(\D)$, and we characterize the equivalence $P+\calM_k \sim Q+\calM_k$.
First of all, note that $R^\star \calM_k R=\calM_k$ for all $R \in \GL_k(\D)$ (which easily follows from Proposition \ref{prop:skewHermitian}).
Now, say that we have $R,S$ in $\GL_k(\D)$ such that $P+\calM_k=R(Q+\calM_k) S$.
Then $P+\calM_k=RQS+R\calM_k S$. Comparing the translation vector spaces yields $R\calM_kS=\calM_k$
i.e., $R(S^\star)^{-1}(S^\star \calM_k S)=\calM_k$, leading to $R(S^\star)^{-1} \calM_k=\calM_k$.
We observe that $(X,Y) \mapsto e(X^\star S^\star R^{-1} Y)$ is a nondegenerate alternator of $R(S^\star)^{-1} \calM_k$,
that $(X,Y) \mapsto e(X^\star Y)$ is a nonzero alternator of $\calM_k$, and finally that $\dim_\F (\calM_k X)=kd-1$ for all $X \in \D^k \setminus \{0\}$.
Hence by Proposition \ref{prop:dimalt<=1} those alternators are $\F$-linearly dependent, yielding $S^\star R^{-1}=\alpha I_k$ for some $\alpha \in \F^\times$.
Hence $P+\calM_k=P'+\calM_k$ for $P':=\alpha^{-1} S^\star Q S$.
Finally, $P=P'+M$ for some $M \in \calM_k$, which yields $\forall X \in \D^n, \; e(X^\star PX)=e(X^\star P'X)=\alpha^{-1}\, e((SX)^\star Q (SX))$.

Conversely. assume that $\forall X \in \D^k, \; e(X^\star PX)=\alpha\,e((SX)^\star Q (SX))$ for some $\alpha \in \F^\times$
and some $S \in \GL_k(\D)$. Then $P-\alpha S^\star QS$ belongs to $\calM_k$, whence
$$P+\calM_k=\alpha S^\star QS+\calM_k \sim Q+(S^{\star})^{-1} \calM_k S^{-1} =Q +\calM_k.$$
This completes our proof.
\end{proof}

\subsection{Making sense of the classification of optimal spaces}

It remains to understand the nature of the mappings $X \in \D^n \mapsto e(X^\star PX)$ when $n \geq 1$ and $P$ ranges over the $e$-nonisotropic matrices of $\GL_n(\D)$.
In the special case $d=1$, they are simply the nonisotropic quadratic forms on $\F^n$.
Hence, as seen in \cite{dSPlargeaffinenonsingular}, the optimal unital $\F$-affine subspaces of
$\Mat_n(\F)$ are classified, up to equivalence, by lists of congruence classes of nonisotropic quadratic forms. In particular, if $\F=\R$
then there is only one such congruence class of $k$-dimensional nonisotropic quadratic forms for each $k \geq 1$, and we obtain that every
optimal affine subspace of invertible matrices of $\Mat_n(\R)$ is equivalent to
$(I_{n_1}+\Mata_{n_1}(\R)) \vee \cdots \vee (I_{n_p}+\Mata_{n_p}(\R))$ for a unique partition $n=n_1+\cdots+n_p$,
and conversely every space of this form is an optimal affine subspace of invertible matrices of $\Mat_n(\R)$.

The situation is not as clear if $d>1$. But say that $\car(\F) \neq 2$, and keep the assumption that $\D$ is of quadratic type over $\F$.
Then $d \in \{2,4\}$. Let $P \in \Mat_n(\F)$. Then there is a unique splitting $P=Q+R$ where $Q$ is Hermitian (that is $Q^\star=Q$) and $R$
is skew-Hermitian. We can then replace $P+\calM_n$ with $Q+\calM_n$, and now
$X \mapsto X^\star QX$ takes all its values in $\F$, so $\forall X\in \D^n, \; e(X^\star QX)=2\,X^\star QX$.
Moreover, it is known by polarizing that $X \mapsto X^\star QX$ determines $Q$.
Hence the equivalence class of $P+\calM_n$ determines $Q$ up to star-congruence and multiplication with an element of $\F^\times$.
As a consequence, optimal unital $\F$-affine subspaces of $\Mat_n(\D)$ are classified, up to equivalence, by lists
$([Q_1],\dots,[Q_p])$ of classes of nonisotropic \emph{Hermitian} matrices over $\D$, under the relation of homo-star-congruence.
In particular, if $\D \in \{\C,\H\}$ and $\F=\R$, then we recover from the classification of Hermitian forms that
every optimal unital $\R$-affine subspace of $\Mat_n(\D)$ is equivalent to
$I_n+(\calSH_{n_1}(\D) \vee \cdots \vee \calSH_{n_p}(\D))$ for a unique partition $n=n_1+\cdots+n_p$,
and conversely $I_n+(\calSH_{n_1}(\D) \vee \cdots \vee \calSH_{n_p}(\D))$ is an optimal unital $\R$-affine subspace of $\Mat_n(\D)$
for every partition  $n=n_1+\cdots+n_p$.

For fields of characteristic $2$, Hermitian matrices are skew-Hermitian and hence there is no obvious way to simplify our vision
on the classification of the forms $X \mapsto e(X^\star PX)$. We leave the problem for future work.

\end{document}